\newcommand{\N}{\mathbb{N}}
\newcommand{\Z}{\mathbb{Z}}
\newcommand{\cO}{\mathcal{O}}
\newcommand{\ord}{\text{\rm ord}}
\newcommand{\lcm}{\text{\rm lcm}}
\newcommand{\Poi}{\text{\rm Poisson}}
\newcommand{\Bin}{\text{\rm Binomial}}
\newcommand{\Tau}{\mathcal{T}}
\newcommand{\rank}{\text{\rm rank}}
\newcommand{\Cy}{\zeta}
\newtheorem{thm}{Theorem}[section]
\newtheorem{prop}[thm]{Proposition}
\newtheorem{defi}[thm]{Definition} 
\newtheorem{lem}[thm]{Lemma} 
\newtheorem{alg}[thm]{Algorithm} 
\newtheorem{ques}[thm]{Question} 
\newtheorem{con}[thm]{Conjecture}
\newcommand\blfootnote[1]{%
  \begingroup
  \renewcommand\thefootnote{}\footnote{#1}%
  \addtocounter{footnote}{-1}%
  \endgroup
}
\begin{document}

\title{Weakly robust periodic solutions of one-dimensional cellular automata with random rules}
 
%\title{One-dimensional cellular automata:\\
%periodic solutions with long temporal periods}
\author{{\sc Janko Gravner} and {\sc Xiaochen Liu}\\
Department of Mathematics\\University of California\\Davis, CA 95616\\{\tt gravner{@}math.ucdavis.edu, xchliu{@}math.ucdavis.edu}}
\maketitle
\bibliographystyle{plain}

\begin{abstract}
We study $2$-neighbor one-dimensional cellular automata with a large number $n$ of states and randomly selected rules.
We focus on the rules with weakly robust periodic solutions (WRPS). 
WRPS are global configurations that exhibit spatial and 
temporal periodicity and advance into any environment with at least a fixed strictly positive velocity. Our main result quantifies how unlikely WRPS are:
the probability of existence of a WRPS within a finite range of periods is asymptotically proportional to $1/n$, provided that 
a divisibility condition is satisfied.
Our main tools come from random graph theory and the Chen-Stein method for Poisson approximation.
\end{abstract}

\blfootnote{\emph{Keywords}: Cellular automaton, periodic solution, robustness, random rule.}
\blfootnote{AMS MSC 2010: 60K35, 37B15, 68Q80.}

\section{Introduction} \label{section: introduction}
We continue our study of one-dimensional cellular automata (CA) with random rules, initiated in~\cite{gl1}.
As in that paper, we investigate rules with $n$ states and $2$ neighbors, with a rule chosen uniformly from all $n^{n^2}$ rules. 
In~\cite{gl1}, we provided the asymptotic probability, as 
$n$ goes to infinity, that such a rule has a periodic solution (PS) 
with a 
given spatial and temporal period.
In this paper, we 
demand a certain additional stability property of a PS, and explore 
the analogous probability of existence of a PS in this special class. 

To be precise, we consider one-dimensional cellular automata with 
$n$-state space, encoded by $\Z_n = \{0, \dots, n - 1\}$, and $2$-neighbor rules $f: \Z_n^2 \to \Z_n$.
Assume that a CA given by the rule $f$ starts from a periodic global configuration $\xi_0: \Z \to \Z_n$ that satisfies $\xi_0(x) = \xi_0(x + \sigma)$, for all $x \in \Z$.
If we also have $\xi_\tau = \xi_0$, and $\tau$ and $\sigma$ are both minimal, then we have found a \textbf{periodic solution} (PS) under rule $f$, with \textbf{spatial period} $\sigma$ and 
\textbf{temporal period} $\tau$. We will not distinguish between 
spatial and temporal shifts of a PS. Therefore, 
each configuration $\xi_t\in \Z_n^{\Z}$, $t\ge 0$, characterizes the PS and is called a \textbf{PS configuration}.
We call the map $(x, t) \mapsto \xi_t(x)$ from $\Z \times \Z_+$ to $\Z$ the \textbf{space-time configuration}; within it, 
any rectangle with $\tau$ rows and $\sigma$ columns also  characterizes the PS, and we call any such rectangle the \textbf{tile} of the PS.
Thus we do not distinguish between tiles which are spatial or
temporal rotations of each other.

In the present paper, we are interested in PS with an expansion property, which we first illustrate by an example and provide 
some motivation, and then give a 
formal definition.
Figure~\ref{figure: wrps example} demonstrates two pieces of the space-time configurations under the 3-state rule\textit{102222210}. (As in~\cite{gl1}, we name a rule by listing its values for all pairs in reverse alphabetical order from $(n - 1, n - 1)$ to $(0, 0)$.)
The tile
$$
\begin{matrix}
0 & 2 & 2 & 2 & 1 & 1\\
2 & 2 & 1 & 1 & 0 & 2\\
1 & 1 & 0 & 2 & 2 & 2\\
\end{matrix}
$$
characterizes a PS under this rule and for such PS, even if the 
spatially periodic configuration is replaced by an arbitrary configuration to the right of some site in $\Z$, the periodic configuration will ``repair'' itself, that is, it will advance to the right with a minimal velocity $v > 0$ as time increases, uniformly over the perturbed environment.

PS with such property are of particular interest and importance, as they are related to stable limit cycles in continuous dynamical systems.
Limit cycles, also known as isolated closed trajectories, are such that neighboring trajectories either spiral toward or away from them.
In the former case, when a perturbation of a limit cycle converges back,
the limit cycle is called stable~\cite{strogatz2001nonlinear}.
Thus we consider an analogous stability property for CA: 
after a one-sided perturbation of a periodic configuration, 
the dynamics make the configuration converge back.
In this paper, we keep the terminology from~\cite{gravner2012robust} 
and refer to 
such stability as robustness. We remark that the minimal velocity $v$ gives the minimal exponential 
rate of convergence to the PS in the standard metric, by which
the distance between $\xi, \eta\in\Z_n^\Z$ is $\mathfrak{m}(\xi, \eta) = 2^{-n}$, where $n = \inf \{|x| : \xi(x) \neq \eta (x)\}$. 

\begin{figure}[ht] 
    \centering
	\includegraphics[scale = 0.1, trim = 5cm 1cm 3cm 1cm, clip]{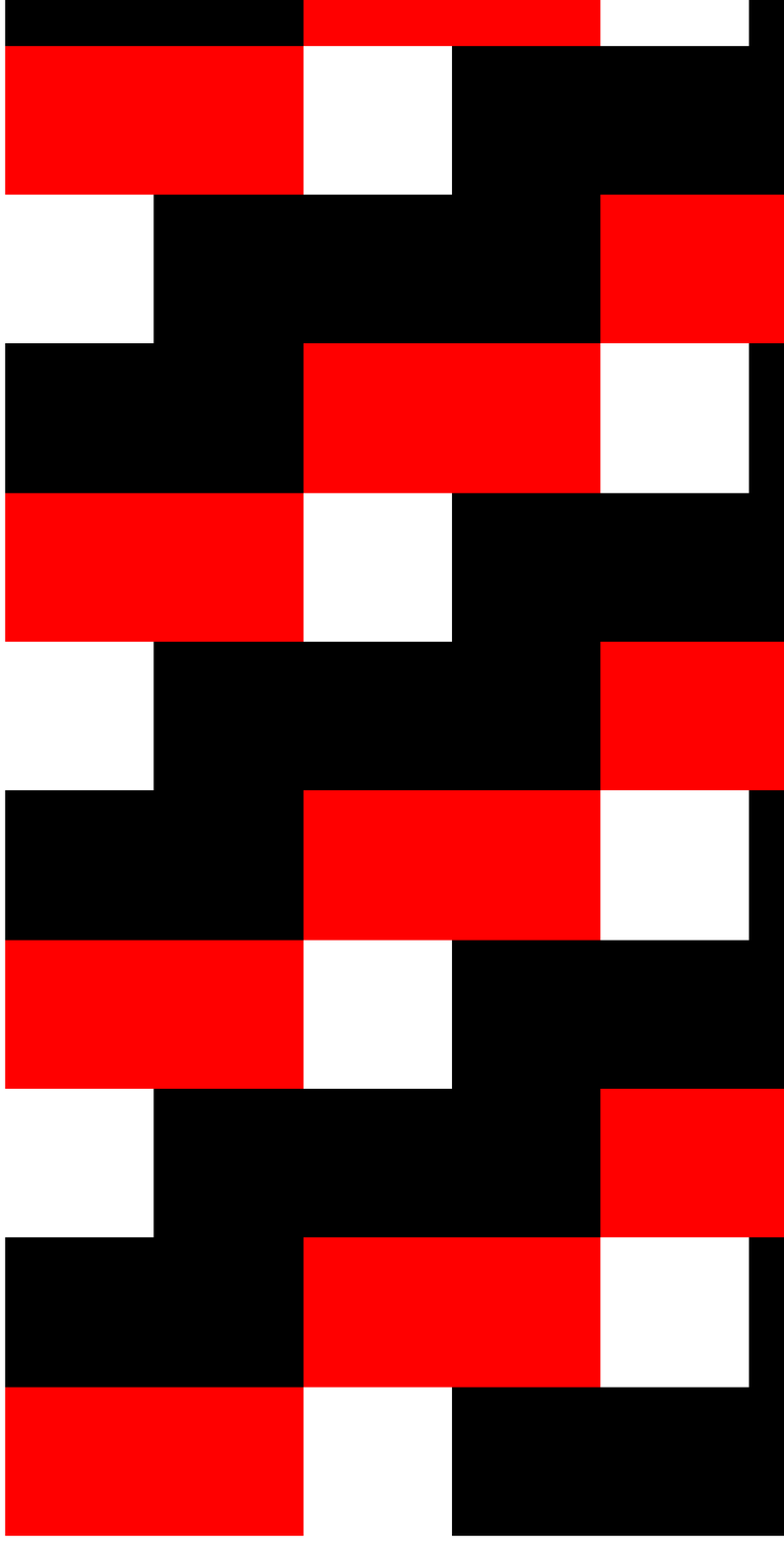}
	\hspace{1cm}
	\includegraphics[scale = 0.1, trim = 5cm 1cm 3cm 1cm, clip]{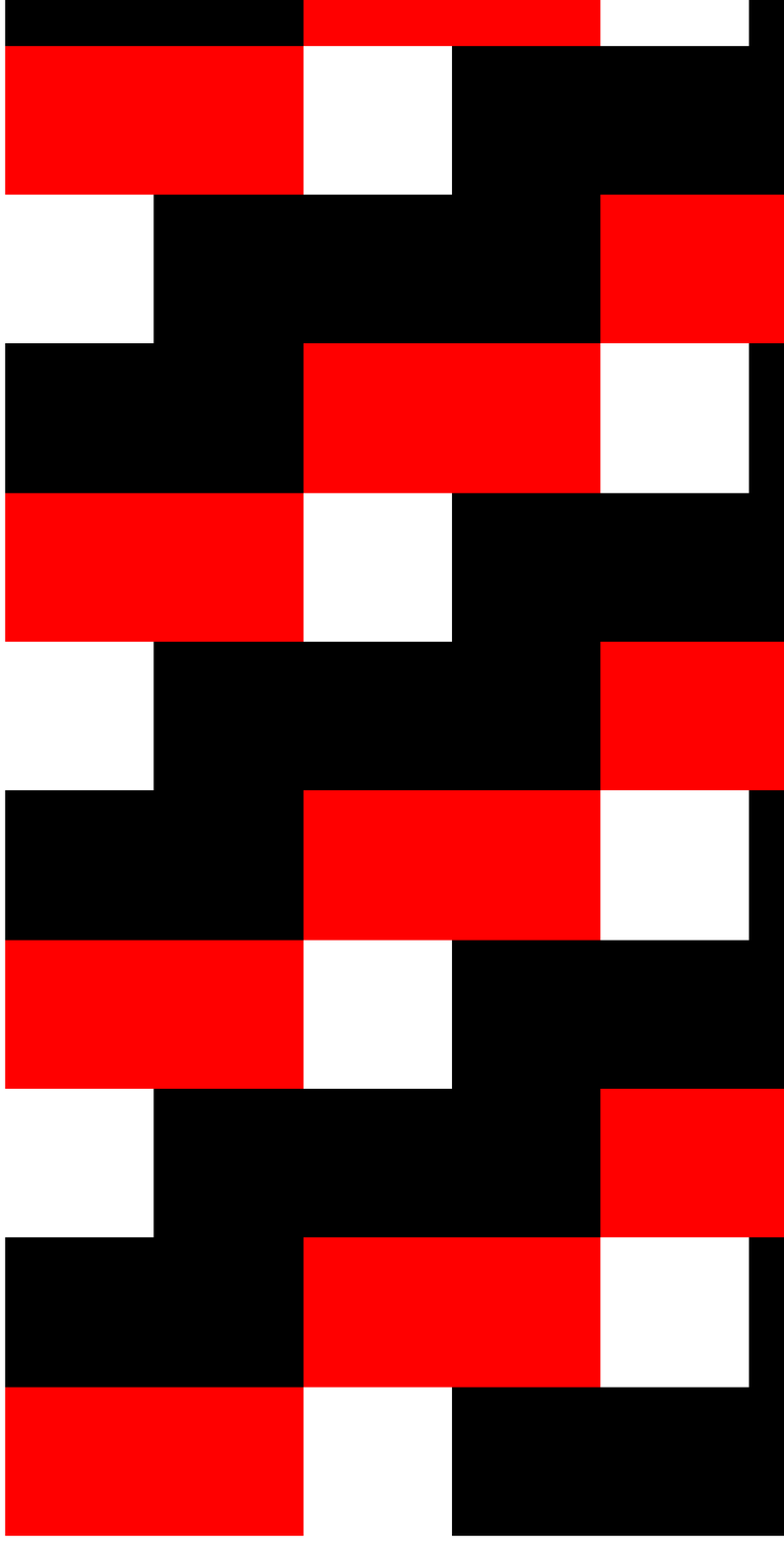}    
    \\
    \caption{Two pieces of the space-time configuration of the 3-state rule\textit{102222210}. 
    The underlying PS exhibits weak robustness: the periodicity expands if it is terminated and continued by an arbitrary configuration, 
    for example a random configuration (left) or all $0$s (right).}
           \label{figure: wrps example}
\end{figure}

Proceeding to the formal definition, let $\xi_0$ be a PS configuration under rule $f$ and $\eta_0$ be any initial configuration that agrees with $\xi_0$ on all $x \le y$, for some $y \in \Z$.
Adapting the definition from~\cite{gravner2012robust}, we call
such initial configurations \textbf{proper} for the PS $\xi_0$.
Let $\xi_t$ and $\eta_t$ be the configurations obtained by running $f$ starting with $\xi_0$ and $\eta_0$, respectively.
%Let
%$$s_t(\eta_0) = \sup \{y : \eta_t(x) = \xi_t(x), \text{ for all } x < y\}$$
%be the first location that $\eta_t$ does not agree with $\xi_t$ at time $t$.
%
Let
$$s_t(\eta_0) = \sup \{x\in \Z: \eta_t(x) = \xi_t(x)\}$$
be the rightmost location at which $\eta_t$ agrees with $\xi_t$ at time $t$.  
Then the \textbf{expansion velocity in the initial environment} $\eta_0$ is 
$$v(\eta_0) = \liminf_{t \to \infty}\frac{s_t}{t},$$
which describes the rate at which spatial periodicity expands. 
The \textbf{expansion velocity}
$$v = \inf \{v(\eta_0): \eta_0 \text{ is proper for } \xi_0\}$$
then measures uniformity over all environments. If $v>0$, then 
the PS $\xi_t$ is \textbf{weakly robust}. With this terminology, 
we distinguish this property from the more restrictive 
robustness from~\cite{gravner2012robust}.

As in~\cite{gl1}, we are interested in the existence of WRPS of a randomly selected $n$-state $2$-neighbor rule $f$. To this end,
fix two sets $\Tau, \Sigma\subset \N=\{1,2,\ldots\}$ and 
let $\mathcal{R}_{\Tau, \Sigma}$ be the (random) set of WRPS of a randomly selected $n$-state rule $f$, with temporal period $\tau$ and spatial period $\sigma$ satisfying 
$(\tau, \sigma) \in \Tau \times \Sigma$. 
While our results for existence of PS~\cite{gl1} are valid 
for arbitrary finite
$\Tau \times \Sigma \subset \mathbb{N} \times \mathbb{N}$, we impose 
a divisibility restriction for our result on WRPS.

\begin{thm} \label{theorem: main}
Let $\Tau \times \Sigma \subset \mathbb{N} \times \mathbb{N}$ be fixed and finite.
If there exists $(\tau, \sigma) \in \Tau \times \Sigma$ such that $\sigma \mid \tau$, then $\mathbb{P}(\mathcal{R}_{\Tau, \Sigma} \neq \emptyset) = c(\Tau, \Sigma)/n + o(1/n)$, where $c(\Tau, \Sigma)$ is a constant depending only on $\Tau$ and $\Sigma$.
\end{thm}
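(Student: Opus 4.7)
The plan is to adapt the first-moment and Chen-Stein Poisson approximation strategy from \cite{gl1}. Let $N$ be the total number of WRPS of the random rule $f$ with $(\tau, \sigma) \in \Tau \times \Sigma$, and for each candidate tile $T$ let $X_T$ be the indicator that $T$ is the tile of a WRPS under $f$. The argument has three main steps: (i) reduce the weak robustness condition to a finite, explicit set of constraints on the rule table $f$; (ii) compute $\mathbb{E}[N]$ and show it equals $c(\Tau, \Sigma)/n + o(1/n)$; (iii) apply Chen-Stein to deduce $\mathbb{P}(N \geq 1) = \mathbb{E}[N] + o(1/n)$.

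Step (i) is the technical heart of the argument, because it requires translating the uniform quantifier over all proper initial environments $\eta_0$ into a finite combinatorial statement. I would track, for each spatial residue $r \in \{0, \dots, \sigma - 1\}$, the set $S_t^{(r)} \subseteq \Z_n$ of values that the perturbed configuration can attain just past the interface at time $t$ when the interface sits at $r$. These sets shrink under the iterated action of the rows $f(\xi_\cdot(r), \cdot)$ indexed by time within one temporal period, and the interface advances from $r$ precisely when $S_t^{(r)}$ collapses to $\{\xi_t(r+1)\}$. Weak robustness is thus equivalent to a synchronization condition along each spatial column of the tile. The role of the divisibility hypothesis $\sigma \mid \tau$ is to guarantee that within one period the states $\xi_\cdot(r)$ sweep out a long enough sequence that the synchronization can be achieved by pinning down only a single extra entry of $f$ beyond the usual PS constraints enumerated in \cite{gl1}.

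With the characterization in hand, step (ii) combines the tile enumeration of \cite{gl1} with a finite factor counting the admissible placements of the one extra constraint. Summing over $(\tau, \sigma) \in \Tau \times \Sigma$ satisfying $\sigma \mid \tau$, and over the finitely many equivalence classes of WRPS structural types, produces the constant $c(\Tau, \Sigma)$, and the per-tile probability of being a WRPS is of order $n^{-1}$ smaller than the corresponding PS probability from \cite{gl1}. Pairs $(\tau, \sigma)$ that fail the divisibility hypothesis contribute $o(1/n)$, because synchronization across all $\sigma$ columns would then require strictly more than one extra rule entry to be pinned.

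Step (iii) closely follows \cite{gl1}: the indicators $X_T$ for different tiles $T$ are independent whenever the tiles share no pair constraints on $f$, and the Chen-Stein neighborhood sums $b_1$ and $b_2$ can be bounded along the lines of \cite{gl1}, with the WRPS-specific extra constraint only further reducing their order of magnitude. I anticipate that the chief obstacle will be step (i): rigorously pinning down the equivalence between weak robustness and the finite ``one extra constraint'' condition, and verifying that $\sigma \mid \tau$ is precisely the hypothesis that selects the leading-order $\Theta(1/n)$ regime, as opposed to parameters where synchronization would cost an additional factor of $n$ in probability.
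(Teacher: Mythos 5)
Your overall architecture (first moment for the upper bound, a second-moment/Poisson-type argument for the lower bound, and a reduction of weak robustness to a per-column condition on the rule table) matches the paper's, but the quantitative heart of the argument is missing and the heuristic you substitute for it is incorrect. The paper reduces weak robustness to the condition that each column $A$ of the tile \emph{decides} the next column $B$: every initial state at the neighboring site must eventually be absorbed into $B$ under iteration of $j\mapsto f(a_i,j)$. Conditioned on $A\to B$, this is \emph{not} the event that ``a single extra entry of $f$ is pinned down''; it is the global structural event that the random functional digraph built from the rows $f(a_i,\cdot)$ is a pseudo-tree whose unique cycle is the one carrying $B$ (no other cycles exist). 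Computing its probability requires a genuine enumeration (Lemma~\ref{lemma: mapping graph deciding probability}, adapting Moon's count of functional digraphs), yielding $\mathbb{P}(A\Rightarrow B\mid A\to B)=\bigl(n^\tau-(n-1)^\tau\bigr)/n^\tau=\tau/n+o(1/n)$ for a simple label $A$, plus a separate Chen--Stein argument (Theorem~\ref{theorem: general label deciding probability}) showing the conditional probability is $o(1)$ for non-simple labels, which is needed to kill the lag-$\ell>0$ tiles. Your proposal never identifies either computation, and the ``one pinned entry'' picture would not produce the constant $\tau$ nor survive the non-simple case.

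Your explanation of the role of $\sigma\mid\tau$ is also wrong. It has nothing to do with the states ``sweeping out a long enough sequence within one period.'' The point is that in a simple tile the columns partition into groups that are circular shifts of one another, and the number of columns with pairwise disjoint state sets (the rank) is at least $x=\sigma/\gcd(\tau,\sigma)$; each such column contributes an \emph{independent} factor of $\tau/n+o(1/n)$ to the conditional decidability probability, so the existence probability is $\Theta(1/n^x)$. The hypothesis $\sigma\mid\tau$ is exactly the condition $x=1$, which is why those pairs dominate at order $1/n$ while pairs with $\sigma\nmid\tau$ contribute $O(1/n^x)=o(1/n)$. Finally, a smaller point: for the lower bound the paper uses Bonferroni together with Lemma~\ref{lemma: one more map} (overlapping tiles force at least one extra rule constraint) rather than a full Chen--Stein bound at the level of tiles; your claim that the indicators $X_T$ are independent ``whenever the tiles share no pair constraints'' is false for the decidability events, which depend on entire rows $f(a_i,\cdot)$ of the rule table, so the dependency structure must be handled via the shared-state analysis rather than shared pair constraints.
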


In addition to~\cite{gl1}, we have investigated periodic solutions
for cellular automata in~\cite{gl2, gl3}, where the emphasis is on maximal temporal periods; some further results 
and conjectures on robustness are in~\cite{xiaochen}.
The initial motivation for the present paper comes 
from the investigation of robust periodic solutions (RPS) in \cite{gravner2012robust}, in which all the 64 one-dimensional binary 3-neighbor edge CA rules and their RPS are studied. To our knowledge, 
robustness of PS is first addressed for the \textit{Exactly 1} rule, i.e., the elementary CA \textit{Rule 22}, in~\cite{gravner2011one}.

This paper is organized as follows.
In the next section, we recall some preliminary results from~\cite{gl1}.
While we summarize major definitions and tools, we omit the proofs and refer the reader to~\cite{gl1} for a more detailed discussion.
In Section~\ref{section: decidability and wrps}, we introduce the property of a tile that distinguishes a WRPS from a PS, i.e., the decidability of labels in a tile.
We establish the probability that a label exhibits such property for a randomly selected rule in Section~\ref{section: decidability probability} and give the proof of Theorem~\ref{theorem: main} in Section~\ref{section: proof of theorem}.
In the final section, we discuss the possible directions and methods to extend and generalize our results.
 
%\note{Up to here.}\note{Up to here.}

\section{Preliminaries} \label{section: preliminaries}
The main purpose of this section is to gather 
the relevant definitions and results from~\cite{gl1}. All lemmas
are restatements of results in~\cite{gl1}, where the proofs
are provided.  

\subsection{Tiles of PS} \label{subsection: tiles of a ps}
%First, we recall more properties of tiles.
We may express a tile with periods $\tau$ and $\sigma$ as $T = (a_{i,j})_{i = 0,\dots, \tau-1, j = 0, \dots ,\sigma - 1}$, once we fix an element in $T$ to be placed at the position $(0, 0)$.
We use the notation $\mathtt{row}_i$ and $\mathtt{col}_j$ to denote the $i$th row and $j$th column of a tile $T$ and use $a_{i, j}$ to denote the element at the $i$th row and $j$th column of $T$, where we always interpret the two subscripts modulo $\tau$ and $\sigma$, respectively.
%All the properties we now introduce are independent of the chosen rotation (as they must be, to be meaningful).

Let $T_1$ and $T_2$ be two tiles and $a_{i, j}$, $b_{k, m}$ be the corresponding elements. 
If $\left(a_{i, j}, a_{i, j + 1}\right) \neq \left(b_{k, m}, b_{k, m + 1}\right)$ for $i, j, k, m \in \Z_+$, then $T_1$ and $T_2$ are called \textbf{orthogonal}, denoted by $T_1 \perp T_2$.
In this case, we observe that two assignments $(a_{i, j}, a_{i, j + 1}) \mapsto a_{i + 1, j + 1}$ and $(b_{k, m}, b_{k, m + 1}) \mapsto b_{k + 1, m + 1}$ occur independently.
We say that $T_1$ and $T_2$ are \textbf{disjoint}, and denote this property by $T_1 \cap T_2 = \emptyset$, if $a_{i, j} \neq b_{k, m}$, for $i, j, k, m \in \Z_+$.
Clearly, every pair of disjoint tiles is orthogonal, but not vice versa.

The following quantities associated with a tile play a important role in the sequel.
We define the \textbf{assignment number} of $T$ to be $p(T) = \#\{(a_{i, j}, a_{i,j + 1}): a_{i, j}, a_{i, j + 1} \in T\}$, i.e., the number of values of the rule $f$ specified by $T$.
Also, let $s(T) =  \#\{a_{i, j}: a_{i, j} \in T\}$ be the number of different states in the tile. 
Clearly, $p(T) \ge s(T)$, so we define $\ell = \ell(T) = p(T) - s(T)$ to be the $\textbf{lag}$ of $T$. 

The following lemma from~\cite{gl1} lists two immediate properties of the tile of a PS.

\begin{lem} \label{lemma: properties of tile}
Let $T= (a_{i, j})_{i = 0,\dots, \tau - 1, j = 0, \dots ,\sigma - 1}$ be the tile of a PS with periods $\tau$ and $\sigma$.
Then $T$ satisfies the following properties: 
\begin{enumerate}
\item Uniqueness of assignment: if $(a_{i, j}, a_{i, j + 1}) = (a_{k, m}, a_{k, m + 1})$, then $a_{i + 1, j + 1} = a_{k + 1, m + 1}$. 
\item Aperiodicity of rows: each row of $T$ cannot be divided into smaller identical pieces.
\end{enumerate}
\end{lem}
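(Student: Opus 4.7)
The plan is to establish the two properties separately; both follow rather directly from the definition of a periodic solution under a deterministic, spatially homogeneous CA rule, so I do not expect either to require a long argument.

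For Part 1, I would identify the tile $T$ with a fundamental domain of the space-time configuration $(x,t) \mapsto \xi_t(x)$ of the underlying PS, where the entries are read modulo the periods $\tau$ and $\sigma$. Under the indexing convention of the statement, every entry satisfies $a_{i+1,j+1} = f(a_{i,j}, a_{i,j+1})$, because $f$ dictates the one-step evolution at each site. Since $f : \Z_n^2 \to \Z_n$ is a function, equal arguments must produce equal values, which immediately yields the claim that $(a_{i,j}, a_{i,j+1}) = (a_{k,m}, a_{k,m+1})$ implies $a_{i+1,j+1} = a_{k+1,m+1}$.

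For Part 2, I would argue by contradiction. Suppose some row $\mathtt{row}_i$ of $T$ is built from identical blocks of length $\sigma' < \sigma$ with $\sigma' \mid \sigma$, so that $\xi_i(x) = \xi_i(x+\sigma')$ for every $x \in \Z$. Spatial homogeneity of $f$ then gives
$\xi_{i+1}(x+1) = f(\xi_i(x), \xi_i(x+1)) = f(\xi_i(x+\sigma'), \xi_i(x+1+\sigma')) = \xi_{i+1}(x+1+\sigma'),$
so $\xi_{i+1}$ inherits period $\sigma'$. A short induction extends the same property to every $\xi_{i+k}$, and temporal periodicity $\xi_{i+\tau} = \xi_i$ lets me wrap around to conclude that $\xi_0$ itself has period $\sigma'$. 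This contradicts the minimality of the spatial period $\sigma$, and the claim follows.

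The main (rather mild) obstacle is bookkeeping: I must match the convention of the statement, in which the assignment map is $(a_{i,j}, a_{i,j+1}) \mapsto a_{i+1,j+1}$ with a built-in diagonal shift by one column, and keep track of the tile indices modulo $\tau$ and $\sigma$ when invoking temporal periodicity. Once these conventions are fixed, both parts reduce to one application of \emph{equal inputs to a function give equal outputs} and one short induction on time, so no deeper ingredients are needed.
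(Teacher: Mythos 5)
Your proof is correct and is the standard (and essentially only) argument: Part 1 is just well-definedness of $f$, and Part 2 propagates a smaller spatial period forward in time and wraps around via $\xi_\tau=\xi_0$ to contradict minimality of $\sigma$. The paper itself omits the proof (deferring to \cite{gl1}), but your argument matches the intended one.
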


%\begin{proof}
%See~\cite{gl1}.
%%Part 1 is clear since $T$ is generated by a CA rule. 
%%Part 2 follows from part 1 and the assumption that the spatial period of $T$ is minimal.
%\end{proof}

We remark that for a tile of a PS that is not weakly robust, there {\it may\/} exist periodic columns.
%For example, note that the first column in Figure~\ref{figure: 4 examples}(d) has period $2$ rather than $4 = \tau$. 
However, in Section~\ref{section: decidability and wrps}, we will show that, if $T$ is a tile of a WRPS, its columns are necessarily aperiodic.

\subsection{Circular Shifts} \label{subsection: circular shifts}
We also recall the concept of circular shifts operation on $Z_n^\sigma$ (or $Z_n^\tau$), the set of words of length $\sigma$ (or $\tau$) from the alphabet $\Z_n$, which will be used in Section~\ref{subsection: simple tiles}.

\begin{defi} \label{defination: circular shift}
Let $\Z_n^\sigma$ consist of all length-$\sigma$ words.
A \textbf{circular shift} is a map $\pi: \Z_n^\sigma \to \Z_n^\sigma$, given by an $i \in \Z_+$ as follows: $\pi(a_0 a_1\dots a_{\tau - 1}) =  a_i a_{i + 1} \dots a_{i + \sigma - 1}$, where the subscripts are modulo $\sigma$.
The \textbf{order} of a circular shift $\pi$ is the smallest $k$ such that $\pi^k(A) = A$ for all $A \in \Z_n^\sigma$, and is denoted by $\ord(\pi)$.
Circular shifts on $\Z_n^{\tau}$ will also appear in the sequel and are defined in the same way.
\end{defi}

\begin{lem} \label{lemma: euler totient}
Let $\pi$ be a circular shift on $\Z_n^\sigma$ and let $A\in \Z_n^\sigma$ be an aperiodic length-$\sigma$ word from alphabet $\Z_n$. 
Then:
(1) $\ord(\pi) \mid \sigma$; and
(2) for any $d \mid \sigma$,
$$\#\left\{B \in \Z_n^\sigma: A = \pi(B) \text{ for some }\pi \text{ with }\ord(\pi) = d \right\} = \varphi(d).$$ 
\end{lem}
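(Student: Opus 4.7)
The plan is to reduce everything to arithmetic modulo $\sigma$ by identifying each circular shift $\pi$ with its shift amount $i \in \{0,1,\ldots,\sigma-1\}$, so that $\pi(a_0 a_1 \cdots a_{\sigma-1}) = a_i a_{i+1}\cdots a_{i+\sigma-1}$.

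First I would prove (1). With the identification above, $\pi^k$ is the circular shift by $ki \pmod{\sigma}$, and this is the identity on every $A \in \Z_n^\sigma$ if and only if $\sigma \mid ki$. Hence $\ord(\pi)$ is the smallest positive $k$ with $\sigma \mid ki$, which gives $\ord(\pi) = \sigma/\gcd(i,\sigma)$. Since this is a divisor of $\sigma$, part (1) is immediate.

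For (2), I would rewrite the set being counted more concretely. The equation $A = \pi(B)$ with $\pi$ the shift by $i$ means $B$ is the shift of $A$ by $-i \pmod{\sigma}$, i.e.\ $B = \pi^{-1}(A)$. Aperiodicity of $A$ is essential here: it guarantees that distinct values of $i \in \{0,\ldots,\sigma-1\}$ produce distinct words $B$, since otherwise some nontrivial shift would fix $A$ and force a period of $A$ properly dividing $\sigma$. So the count equals
\[
\#\{i \in \{0,\ldots,\sigma-1\} : \ord(\pi_i) = d\}
= \#\{i \in \{0,\ldots,\sigma-1\} : \gcd(i,\sigma) = \sigma/d\}.
\]
Parametrizing $i = (\sigma/d)\, j$ with $j \in \{0,\ldots,d-1\}$, the condition $\gcd(i,\sigma) = \sigma/d$ becomes $\gcd(j,d) = 1$, which holds for exactly $\varphi(d)$ values of $j$ (the case $d=1$ corresponding to $j=0$). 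This gives (2).

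The proof is essentially elementary; the only step that needs care is the injectivity of the map $i \mapsto \pi_i^{-1}(A)$, where aperiodicity of $A$ must be invoked explicitly. Everything else is a routine translation between circular shifts and residues modulo $\sigma$, together with the standard identity $\#\{j \in \{0,\ldots,d-1\} : \gcd(j,d)=1\} = \varphi(d)$.
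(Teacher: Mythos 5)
Your proof is correct. The paper itself does not prove this lemma (it is restated from~\cite{gl1} with the proof deferred there), but your argument---identifying each circular shift with a residue $i$ modulo $\sigma$, computing $\ord(\pi_i)=\sigma/\gcd(i,\sigma)$, invoking aperiodicity of $A$ exactly where it is needed (injectivity of $i\mapsto\pi_i^{-1}(A)$), and counting the $i$ with $\gcd(i,\sigma)=\sigma/d$ via the substitution $i=(\sigma/d)j$---is the standard and complete way to establish both parts.
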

%\begin{proof}
%See~\cite{gl1}.
%%Note that the $\sigma$ circular shifts form a cyclic group of order $\sigma$.
%%Moreover, $\ord(\pi)$ of a circular shift is its order in the group, thus (1) follows.
%%To prove (2), observe that the circular shifts of order $d$ generate a cyclic subgroup and the number of them is $\varphi(d)$.
%%As $A$ is aperiodic, the cardinality in the claim is the same.
%\end{proof}

Two words $A$ and $B$ of length $\sigma$ are \textbf{equal up to a circular shift} if $B = \pi(A)$ for some circular shift $\pi$.
%For example, words $0123$ and $2301$ are not equal, but are equal up to a circular shift. 

\subsection{Directed Graph on Labels} \label{subsection: directed graph on labels}

In our study of PS~\cite{gl1}, we extended the notion of label trees from~\cite{gravner2012robust} to define the \textbf{label digraph}.
As this object is also of relevance to WRPS, we recall its definition in this subsection.
 
\begin{defi} \label{definition: right-extend}
Let $A = a_0 \dots a_{\tau - 1}$ and $B=b_0 \dots b_{\tau - 1}$ be two words from alphabet $\Z_n$, which we call \textbf{labels} of length $\tau$.
(While it is best to view them as vertical columns, we write them horizontally for reasons of space, as in~\cite{gravner2012robust}.)
We say that $A$ \textbf{right-extends to} $B$ if $f(a_{i}, b_{i}) = b_{i + 1}$, for all $i \in \Z_+$, where (as usual) the indices are modulo $\tau$, and we write $A \to B$.
We form the \textbf{label digraph} associated with a given $\tau$ by forming an arc from a label $A$ to a label $B$ if $A$ right-extends to $B$.
\end{defi}

The right extension relation is the basis for the Algorithm~\ref{algorithm: ps from label digraph} below for finding all the PS with temporal period $\tau$.

\iffalse
A label $A = a_0 \dots a_{\tau - 1}$ right-extends to $B$ if and only if we preserve the temporal periodicity from a column $A$ to the column $B$ to its right.
This fact is the basis for the Algorithm~\ref{algorithm: ps from label digraph} below, which gives all the PS with temporal period $\tau$.
The label digraph of same rule as in Figure~\ref{figure: configuration digraph} and temporal period $\tau=2$ is presented in Figure~\ref{figure: label digraph}.
For example, we have the arc from label 12 to 10 as $1 \underline{1} \mapsto 0$, $2\underline{0} \mapsto 1$. 
Either of the two 3-cycles in the digraph generates the PS in Figure~\ref{figure: ps example}. 
\fi

\begin{alg}\label{algorithm: ps from label digraph}
\quad \\
\noindent\rule{\textwidth}{1pt}
\begin{algorithm}[H] %\label{algorithm: generating random DEC}
\SetKwInOut{Input}{input}
\SetKwInOut{Output}{output}
\SetKw{Print}{print}
\SetKw{Return}{return}
\DontPrintSemicolon

\Input{
Label digraph $D_{\tau, f}$ of $f$ with temporal period $\tau$\;
}
\;
Find all the directed cycles in $D_{\tau, f}$\;
\For{each cycle $A_0 \to A_1 \to \cdots \to A_{\sigma - 1}\to A_0$}{form the tile $T$   
by placing labels $A_0, A_1,\ldots,  A_{\sigma - 1}$ on successive columns.\;
	\If{both spatial and temporal periods of $T$  are minimal}{
		\Print $T$ as a PS\;
	}
}
\end{algorithm}
\noindent\rule{\textwidth}{1pt}
\end{alg}

\iffalse
\begin{alg} \label{algorithm: ps from label digraph}
Input: Label digraph $D_{\tau, f}$ of $f$ with period $\tau$.\\
Step 1: Find all the directed cycles in $D_{\tau, f}$.\\
Step 2: For each cycle $A_0 \to A_1 \to \cdots \to A_{\sigma - 1}\to A_0$, form the tile $T$   
by placing configurations $A_0, A_1, \ldots,  A_{\sigma - 1}$ on successive columns.\\
Step 3: If both spatial and temporal periods of $T$  are minimal, then output $T$.
\end{alg}
\fi
\begin{prop}
All PS of temporal period $\tau$ of $f$ can be obtained by the Algorithm~\ref{algorithm: ps from label digraph}.
\end{prop}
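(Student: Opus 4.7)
The plan is to show that every periodic solution with temporal period $\tau$ arises as one of the tiles produced by Algorithm~\ref{algorithm: ps from label digraph}. Let $T = (a_{i,j})_{i=0,\dots,\tau-1,\,j=0,\dots,\sigma-1}$ be the tile of an arbitrary PS of $f$ with temporal period $\tau$ and spatial period $\sigma$. First I would read the columns as labels: set
$$
A_j = a_{0,j}\,a_{1,j}\,\dots\,a_{\tau-1,j}, \qquad j = 0, 1, \dots, \sigma-1,
$$
which are well-defined length-$\tau$ words over $\Z_n$ because the temporal period of the PS divides $\tau$.

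Next I would verify that $A_j \to A_{j+1}$ in $D_{\tau,f}$ for each $j = 0, \dots, \sigma-1$ (indices mod $\sigma$). By definition of a PS under rule $f$, the entries of $T$ satisfy
$$
f(a_{i,j}, a_{i,j+1}) = a_{i+1,j+1} \quad \text{for every } i \in \Z_+,
$$
with the row index taken mod $\tau$. Reading this line against Definition~\ref{definition: right-extend} gives exactly the right-extension condition $A_j \to A_{j+1}$. Since the tile is spatially periodic with period $\sigma$, we have $A_\sigma = A_0$, and consequently the sequence of labels $A_0 \to A_1 \to \cdots \to A_{\sigma-1} \to A_0$ is a directed closed walk in $D_{\tau,f}$, which decomposes into directed cycles. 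In particular, at least one such cycle is enumerated in Step 1 of the algorithm.

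Then I would argue that the tile the algorithm assembles from the walk $A_0 \to \cdots \to A_{\sigma-1} \to A_0$ is precisely $T$, since placing $A_0, \dots, A_{\sigma-1}$ on successive columns reconstructs $T$ column by column. The hypothesis that $T$ is the tile of a PS ensures that $\tau$ and $\sigma$ are both the minimal temporal and spatial periods, so the minimality test in the algorithm is passed and $T$ is printed. The one small subtlety — and the only place that requires care — is that a cycle shorter than $\sigma$ could in principle be produced if the $A_j$ were not distinct; but then $T$ would consist of fewer than $\sigma$ distinct columns repeated periodically, contradicting the minimality of the spatial period $\sigma$. Thus each PS of temporal period $\tau$ is output by the algorithm, completing the proof.
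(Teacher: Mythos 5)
The core of your argument --- reading the columns of $T$ as labels $A_0,\dots,A_{\sigma-1}$, checking from $f(a_{i,j},a_{i,j+1})=a_{i+1,j+1}$ that $A_j\to A_{j+1}$ for every $j$, and observing that the columns therefore trace a closed directed walk in $D_{\tau,f}$ --- is correct. The gap is precisely in the ``small subtlety'' you flag at the end. You claim that if the $A_j$ are not distinct then $T$ consists of fewer than $\sigma$ distinct columns \emph{repeated periodically}, contradicting minimality of $\sigma$. That implication is false: a repeated column does not force the column sequence to be periodic. Concretely, take $\tau=1$, $n=3$ and any rule with $f(0,1)=1$, $f(1,0)=0$, $f(0,2)=2$, $f(2,0)=0$. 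The single row $0\,1\,0\,2$ is an aperiodic word, hence the tile of a PS with temporal period $1$ and minimal spatial period $4$, yet its columns are $0,1,0,2$ and the associated closed walk $0\to 1\to 0\to 2\to 0$ revisits the label $0$. Only \emph{simple} tiles are guaranteed to have pairwise distinct columns (Lemma~\ref{lemma: simple tiles properties}); a general PS tile need not.

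Because of this, the closed walk coming from $T$ need not be a directed cycle in the usual sense of no repeated vertices, and your fallback --- decomposing the walk into simple cycles --- does not rescue the argument: in the example above the simple cycles yield only the two $\sigma=2$ tiles $0\,1$ and $0\,2$, not the $\sigma=4$ tile $0\,1\,0\,2$, so ``at least one such cycle is enumerated'' does not imply that $T$ itself is output. To close the gap you must read ``directed cycles'' in Step~1 of Algorithm~\ref{algorithm: ps from label digraph} as all closed directed walks (circuits, considered up to rotation); then the walk $A_0\to\cdots\to A_{\sigma-1}\to A_0$ is itself enumerated, reassembles column by column into $T$ exactly, and passes the minimality test. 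With the narrower reading of ``cycle'' the distinctness of the $A_j$ is genuinely needed, and the justification you offer for it does not hold for non-simple PS tiles.
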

\subsection{Chen-Stein Method for Poisson Approximation} \label{subsection: chen-stein method for poisson approximation}
The most useful tool in proving Poisson convergence is the Chen-Stein method~\cite{barbour1992poisson}. The local version  
stated below (Theorem 4.7 from~\cite{ross2011fundamentals}) was instrumental in~\cite{gl1} and continues to play a similar role in the present paper. 
%In~\cite{gl1}, the main tool we use to prove Poisson convergence is the Chen-Stein method~\cite{barbour1992poisson}.
%This theorem is also used in this paper to prove Theorem~\ref{theorem: general label deciding probability}.
%So we briefly mention the result.

Let $\Poi(\lambda)$ be a Poisson random variable with expectation $\lambda$, and let $d_{\text{TV}}$ be the total variation distance
between measures on $\Z_+$.
%We need the following setting for our purposes.
Assume that $I_i$, $i\in \Gamma$, are indicators of a finite family of events, $p_i = \mathbb{E}(I_i)$, $\displaystyle W = \sum_{i\in \Gamma}I_i$, $\displaystyle \lambda = \sum_{i\in \Gamma} p_i = \mathbb{E}W$, and $\Gamma_i = \{j\in \Gamma: j \neq i,\, I_i\text{ and } I_j \text{ are not independent}\}$. 
 
\begin{lem} \label{lemma: chen-stein method}
We have
$$d_{\text{TV}}\left(W, \Poi\left(\lambda\right)\right) 
\le \min\left(1, \lambda^{-1}\right) \left[ \sum_{i\in \Gamma} p_i^2 
+ \sum_{i\in \Gamma, j\in \Gamma_i} \left(p_ip_j + \mathbb{E}\left(I_iI_j\right)\right)\right].$$
\end{lem}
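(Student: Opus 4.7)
The plan is to use Stein's method for Poisson approximation, together with the local-dependence decomposition prescribed by the neighborhoods $\Gamma_i$. The starting point is Chen's characterization of the Poisson distribution: a $\Z_+$-valued random variable $Z$ is $\Poi(\lambda)$ if and only if $\mathbb{E}[\lambda g(Z+1) - Z g(Z)] = 0$ for every bounded $g:\Z_+ \to \R$. For each test set $A \subset \Z_+$ one solves the Stein equation $\lambda g(k+1) - k g(k) = \mathbf{1}_A(k) - \Poi(\lambda)(A)$ for a function $g = g_A$, and then
$$d_{\text{TV}}(W, \Poi(\lambda)) = \sup_{A \subset \Z_+} \bigl|\mathbb{E}[\lambda g_A(W+1) - W g_A(W)]\bigr|.$$

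The crucial analytic input is the Stein factor bound $\|\Delta g_A\|_\infty := \sup_k |g_A(k+1) - g_A(k)| \le \min(1, \lambda^{-1})$. I expect this to be the main obstacle: it requires writing $g_A$ explicitly as a partial sum of Poisson weights and then a delicate monotonicity argument, and it is precisely what injects the $\lambda^{-1}$ factor that makes the bound sharp when $\lambda$ is large. Everything else is soft in comparison, and I would take this inequality as a black box on a first pass.

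Once the Stein factor is available, the rest is bookkeeping against the dependency graph. For each $i$, decompose $W = I_i + J_i + Z_i$ with $J_i = \sum_{j \in \Gamma_i} I_j$ and $Z_i = \sum_{j \notin \Gamma_i \cup \{i\}} I_j$; by the definition of $\Gamma_i$, the variable $Z_i$ is independent of $I_i$, so the conditional law of $Z_i$ given $I_i = 1$ agrees with its unconditional law. Writing
$$\mathbb{E}[W g(W)] = \sum_i p_i\, \mathbb{E}[g(1 + J_i + Z_i) \mid I_i = 1] \quad \text{and} \quad \lambda\, \mathbb{E} g(W+1) = \sum_i p_i\, \mathbb{E} g(1 + I_i + J_i + Z_i),$$
the Stein expression becomes $\sum_i p_i \bigl(\mathbb{E} g(1 + I_i + J_i + Z_i) - \mathbb{E}[g(1 + J_i + Z_i) \mid I_i = 1]\bigr)$. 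Comparing both terms to the common reference $\mathbb{E} g(1 + Z_i)$ and invoking $|g(a) - g(b)| \le \|\Delta g\|_\infty |a-b|$ bounds each summand by $p_i \|\Delta g\|_\infty \bigl(p_i + \mathbb{E} J_i + \mathbb{E}[J_i \mid I_i = 1]\bigr)$. Since $p_i \mathbb{E} J_i = \sum_{j \in \Gamma_i} p_i p_j$ and $p_i \mathbb{E}[J_i \mid I_i = 1] = \sum_{j \in \Gamma_i} \mathbb{E}(I_i I_j)$, summing over $i$ and multiplying by $\min(1, \lambda^{-1})$ reproduces exactly the stated bound.
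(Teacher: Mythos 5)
Your argument is the standard Stein--Chen local-dependence proof and it checks out: the decomposition $W=I_i+J_i+Z_i$, the comparison of both terms to $\mathbb{E}\,g(1+Z_i)$, and the identities $p_i\mathbb{E}J_i=\sum_{j\in\Gamma_i}p_ip_j$ and $p_i\mathbb{E}[J_i\mid I_i=1]=\sum_{j\in\Gamma_i}\mathbb{E}(I_iI_j)$ reproduce the stated bound once the Stein factor $\|\Delta g_A\|_\infty\le\min(1,\lambda^{-1})$ is granted; the paper does not prove this lemma but cites it as Theorem 4.7 of Ross's survey, whose proof is exactly the route you describe. The one caveat, inherited from the paper's own loose phrasing of $\Gamma_i$, is that the step ``$Z_i$ is independent of $I_i$'' requires $I_i$ to be independent of the entire collection $\{I_j: j\notin\Gamma_i\cup\{i\}\}$, not merely pairwise independent of each such $I_j$; this stronger property is what the cited theorem assumes and what holds in the paper's applications.
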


\iffalse
In our applications of the above lemma, all deterministic and random quantities depend on the number $n$ of states, which we make explicit by the subscripts.
In our setting, we prove that $d_{\text{TV}}\left(W_n, \Poi\left(\lambda_n\right)\right) = \cO(1/n)$ and that $\lambda_n \to \lambda$ as $n \to \infty$, for an explicitly given $\lambda$, which implies that $W_n$ converges to $\Poi(\lambda)$ in distribution.
See Theorem~\ref{THEOREM: MAIN 2_1} and Theorem~\ref{THEOREM: MAIN 2_2}.
\fi

%\note{Up to here.}
 
\subsection{Simple Tiles} \label{subsection: simple tiles}
If a tile $T$ has zero lag, we call $T$ \textbf{simple}.
In~\cite{gl1}, we show that the probability of existence of PS with simple tiles provides the dominant terms of the existence of PS.
In Section~\ref{section: proof of theorem}, we show that it is also the dominant term for WRPS. %The next five lemmas are from~\cite{gl1}.
\iffalse
For example, consider the tiles
$$T_1 = 
\begin{matrix}
0 & 1 & 2 & 3\\
2 & 3 & 0 & 1\\
\end{matrix},\qquad
T_2 = 
\begin{matrix}
0 & 1 & 2 & 1\\
2 & 1 & 0 & 1\\
\end{matrix}.
$$
Then $T_1$ is simple, as $s(T_1) = p(T_1) = 4$, 
but $T_2$ is not, as $s(T_2) = 3$ and $p(T_2) = 4$.
Naturally, we call a PS simple if its tile is simple. 
\fi

\begin{lem} \label{lemma: simple tiles properties}   
Assume $T = (a_{i,j})_{ i = 0,\dots, \tau-1, j = 0, \dots, \sigma - 1}$ is a simple tile.
Then
\begin{enumerate}
\item the states on each row of $T$ are distinct;
\item if two rows of $T$ share a state, then they are circular shifts of each other;
\item the states on each column of $T$ are distinct; and
\item if two columns of $T$ share a state, then they are circular shifts of each other.
\end{enumerate}
\end{lem}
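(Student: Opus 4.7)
My plan is to extract a permutation structure from the simplicity hypothesis, apply it to rows to prove (1) and (2), and then obtain an analogous structure for columns to prove (3) and (4). The key observation is that $p(T) = s(T)$ forces both coordinate projections from the pair set $P = \{(a_{i,j}, a_{i,j+1})\}$ to the state set $S = \{a_{i,j}\}$ to be bijections: every state $a$ appears as $a_{i,j}$ for some $(i,j)$, hence as the first coordinate of $(a_{i,j}, a_{i,j+1})$ and as the second coordinate of $(a_{i,j-1}, a_{i,j})$, so both projections are surjective, and equality of cardinalities forces injectivity. This yields a permutation $g: S \to S$ satisfying $a_{i,j+1} = g(a_{i,j})$ throughout $T$, so each row takes the form $(a_{i,0}, g(a_{i,0}), g^2(a_{i,0}), \ldots, g^{\sigma-1}(a_{i,0}))$.

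Properties (1) and (2) then follow quickly from the aperiodicity of rows in Lemma~\ref{lemma: properties of tile}. The $g$-orbit through $a_{i,0}$ must have length exactly $\sigma$ (otherwise the row would be a genuine repetition), so the entries of row $i$ are distinct, which gives (1); and if two rows share a state, their starting elements lie on the same $g$-orbit of length $\sigma$, so one row is a circular shift of the other, which gives (2).

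For the columns I would mirror the argument using a second permutation $h: S \to S$ defined by $h(a_{i,j}) := a_{i+1, j}$. Well-definedness uses both simplicity and Lemma~\ref{lemma: properties of tile}: if $a_{i,j} = a_{k,m}$, then $a_{i,j+1} = g(a_{i,j}) = g(a_{k,m}) = a_{k,m+1}$, so the pair $(a_{i,j}, a_{i,j+1})$ equals $(a_{k,m}, a_{k,m+1})$, and the uniqueness of assignment yields $a_{i+1, j+1} = a_{k+1, m+1}$; applying $g^{-1}$ then gives $a_{i+1, j} = a_{k+1, m}$. Since $h$ is trivially surjective, it is a bijection. Before running the orbit argument on columns I also need column aperiodicity, which I would establish by propagation: if $a_{i+d, j_0} = a_{i, j_0}$ for all $i$ and some $d < \tau$, then $a_{i+d, j_0+1} = g(a_{i+d, j_0}) = g(a_{i, j_0}) = a_{i, j_0+1}$, so by iteration every column has period $d$, contradicting the minimality of $\tau$. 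With this, properties (3) and (4) follow exactly as (1) and (2) did, with $h$ and columns replacing $g$ and rows.

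The main subtlety lies in setting up $h$: the uniqueness-of-assignment hypothesis from Lemma~\ref{lemma: properties of tile} demands equality of pairs rather than of single states, and it is precisely simplicity (through $g$) that promotes a one-state match into a pair match. Once both permutations and the aperiodicity of columns are in place, all four properties drop out by the same orbit-tracking argument.
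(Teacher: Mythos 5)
Your proof is correct and is essentially the paper's argument in tidier clothing: the paper's repeated step ``$a_{i,j}=a_{k,m}$ forces $a_{i,j+1}=a_{k,m+1}$ to avoid $p(T)>s(T)$'' is exactly the statement that your first projection is injective, i.e.\ that your permutation $g$ is well defined, and the paper likewise uses uniqueness of assignment to propagate equalities downward for the column statements. Packaging the propagation as the permutations $g$ and $h$ and reading off orbit lengths is a clean reformulation, not a different route, and all the steps (including the well-definedness of $h$ and the column-aperiodicity check) are sound.
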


\iffalse
\begin{proof}
See~\cite{gl1}.

\noindent{\it Part 1\/}: When $\sigma = 1$, each row contains only one state, making the claim trivial.
Now, assume that $\sigma \ge 2$ and that $a_{i, j} = a_{i, k}$ for some $i$ and $j \neq k$.
We must have $a_{i, j + 1} = a_{i, k + 1}$ in order to avoid $p(T) > s(T)$. 
Repeating this procedure for the remaining states on $\mathtt{row}_i$ shows that this row is periodic, contradicting part 2 of Lemma~\ref{lemma: properties of tile}. 
 
\noindent{\it Part 2\/}: If $a_{i,j} = a_{k, m}$, for $i \neq k$, then the states to their right must agree, i.e., $a_{i, j + 1} = a_{k, m + 1}$, in order to avoid $p(T) > s(T)$. 
Repeating this observation for the remaining states on $\mathtt{row}_i$ and $\mathtt{row}_k$ gives the desired result. 
  
\noindent{\it Part 3\/}:
Assume a column contains repeated state, say $a_{i, j} = a_{k, j}$ for some $i, j$ and $k$. 
By part 2, $\mathtt{row}_i$ is exactly the same as $\mathtt{row}_k$, so that the temporal period of this tile can be reduced, a contradiction.
 
\noindent{\it Part 4\/}:
Assume that $a_{i, j} = a_{k, m}$, for $j \neq m$. 
Then $a_{i, j + 1} = a_{k, m + 1}$ by parts 1 and 2. 
So, $a_{i + 1, j + 1} = a_{k + 1, m + 1}$ by part 1 in Lemma~\ref{lemma: properties of tile}.
So, $a_{i + 1, j} = a_{k + 1, m}$, again by  parts 1 and 2.
Now, repeating the previous step for $a_{i + 1, j} = a_{k + 1, m}$ gives the desired result.
\end{proof}
\fi
\iffalse
We revisit the remark following Lemma~\ref{lemma: properties of tile}:  a tile may have periodic columns, but such a tile cannot be simple.   
\fi
Let $T = (a_{i,j})_{ i = 0, \dots, \tau-1, j = 0, \dots ,\sigma - 1}$ be a simple tile.
Let 
$$i = \min \{k = 1, 2, \dots, \tau - 1: \mathtt{row}_k = \pi(\mathtt{row}_0), \text{ for some circular shift } \pi: \Z^\sigma \to \Z^\sigma\}$$
be the smallest $i$ such that $\mathtt{row}_i$ is a circular shift of $\mathtt{row}_0$, and let $i = 0$ if and only if $T$ does not have circular shifts of $\mathtt{row}_0$ other than this row itself.
Then this circular shift satisfies $\mathtt{row}_{(j + i) \mod \tau} = \pi(\mathtt{row}_j)$, for all $j = 0, \dots, \tau - 1$ and $i$ is determined by the tile $T$; we denote this circular shift by $\pi_T^r$.
We denote by $\pi_T^c$ the analogous circular shift for columns.

\begin{lem} 
Let $T$ be a simple tile of a PS, and let $d_1 = \text{ord } (\pi_T^r)$ and $d_2 = \text{ord } (\pi_T^c)$.
Then $d_1$ and $d_2$ are equal and divide  $\gcd(\tau, \sigma)$.
\end{lem}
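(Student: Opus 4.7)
The plan is to analyze the orbits of rows under $\langle\pi_T^r\rangle$ and columns under $\langle\pi_T^c\rangle$, and then compare them through a double count of the state set $S$ of all values appearing in $T$.

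I would first settle the divisibility claims. Since $\mathtt{row}_0$ is aperiodic by part 2 of Lemma~\ref{lemma: properties of tile}, the only circular shift that fixes it is the identity, so the stabilizer of $\mathtt{row}_0$ in $\langle\pi_T^r\rangle$ is trivial and its orbit has exactly $d_1$ elements. Iterating the relation $\mathtt{row}_{(j+i)\mod\tau}=\pi_T^r(\mathtt{row}_j)$ yields $(\pi_T^r)^k(\mathtt{row}_0)=\mathtt{row}_{ki\mod\tau}$, and matching the orbit size once via the order $d_1$ and once via the cyclic subgroup of $\Z_\tau$ generated by $i$ gives $d_1=\tau/\gcd(i,\tau)$, hence $d_1\mid\tau$. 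Combined with $d_1\mid\sigma$ from part~(1) of Lemma~\ref{lemma: euler totient}, this forces $d_1\mid\gcd(\tau,\sigma)$, and the symmetric argument on columns yields $d_2\mid\gcd(\tau,\sigma)$.

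For $d_1=d_2$, the pivotal step is to identify, on the $\tau$ rows of $T$, the relation ``being a circular shift of'' with ``sharing at least one state'', via part 2 of Lemma~\ref{lemma: simple tiles properties}. By the minimality of $i$ in the definition of $\pi_T^r$, the equivalence class of $\mathtt{row}_0$ coincides with its $\langle\pi_T^r\rangle$-orbit, which has size $d_1$. The entrywise down-shift $D\colon S\to S$---which is a well-defined bijection on a simple tile and commutes with circular shifts when applied entrywise to row words---carries the equivalence class of $\mathtt{row}_k$ bijectively onto that of $\mathtt{row}_{k+1}$, so every row equivalence class has size exactly $d_1$; analogously every column class has size $d_2$. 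I then double-count $|S|$: each row has $\sigma$ distinct states by part 1 of Lemma~\ref{lemma: simple tiles properties}, and rows within one class share the same set of $\sigma$ states, while rows in distinct classes have disjoint state sets (else they would merge). Hence $|S|=(\tau/d_1)\cdot\sigma$, and the symmetric count via columns gives $|S|=(\sigma/d_2)\cdot\tau$, forcing $d_1=d_2$.

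The main technical hurdle is verifying that every row equivalence class has size $d_1$ (rather than just the class of $\mathtt{row}_0$); this reduces to checking that the entrywise down-shift $D$ on states is well defined and commutes with circular shifts on row words, both of which rest on the simplicity of $T$. Once this is in hand, the degenerate case $i=0$ (so $\pi_T^r$ is the identity and $d_1=1$) fits uniformly into the framework, as then each row forms its own class and $|S|=\tau\sigma$.
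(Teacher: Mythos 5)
Your proof is correct, but it counts a different object than the paper does, so the two arguments are worth contrasting. The paper's proof is a purely local double count: fix the single state $a_{0,0}$; by parts 1--2 of Lemma~\ref{lemma: simple tiles properties} it occurs exactly once in each of the $d_1$ rows forming the class of $\mathtt{row}_0$ and in no other row, and by parts 3--4 exactly once in each of the $d_2$ columns forming the class of $\mathtt{col}_0$ and in no other column; equating the two counts of its occurrences gives $d_1=d_2$ immediately, and divisibility then comes from Lemma~\ref{lemma: euler totient}. This needs only the orbit of $\mathtt{row}_0$ and of $\mathtt{col}_0$, so no transport between classes is required. Your argument is the global version: you double-count the entire state set $S$, which forces you to first establish that \emph{every} row class has size $d_1$ (and every column class size $d_2$), and that is exactly what your entrywise down-shift $D$ is for. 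The extra work is genuine but sound --- well-definedness of $D$ is precisely part 4 of Lemma~\ref{lemma: simple tiles properties} combined with part 3, and $D$ plainly commutes with circular shifts --- and it buys you a little more: the identity $\#S=\tau\sigma/d_1=\tau\sigma/d_2$ recovers Lemma~\ref{lemma: value of s(T)} as a byproduct, and your index computation $d_1=\tau/\gcd(i,\tau)$ gives $d_1\mid\tau$ directly, without first knowing $d_1=d_2$. Two small points you should make explicit if you write this up: (i) distinct row indices give distinct row words (otherwise $\tau$ is reducible, by uniqueness of assignment), which is what lets you match the orbit of $\mathtt{row}_0$ under $\langle\pi_T^r\rangle$ with the index set $\{ki \bmod \tau\}$; and (ii) the identification of the class of $\mathtt{row}_0$ with that orbit uses that $\{m:\mathtt{row}_m\text{ is a circular shift of }\mathtt{row}_0\}$ is a subgroup of $\Z_\tau$, hence generated by its minimal positive element $i$ --- ``by minimality'' alone is slightly too quick.
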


\iffalse
\begin{proof}
See~\cite{gl1}.

Fix an element as $a_{0, 0}$.
By Lemma~\ref{lemma: simple tiles properties}, parts 1 and 2, $a_{0, 0}$ appears in $d_1$ rows of $T$.
It also appears in $d_2$ columns by Lemma~\ref{lemma: simple tiles properties}, parts 3 and 4.
As a consequence, $d_1 = d_2$.
The divisibility follows from Lemma~\ref{lemma: euler totient}.

\end{proof}
\fi

\begin{lem} \label{lemma: value of s(T)}
An integer $s\le n$ is the number of states in a simple tile $T$ of PS if and only if there exists $d \mid \gcd (\tau, \sigma)$, such that $s = \tau\sigma/d$.  
\end{lem}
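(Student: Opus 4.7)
The plan is to prove both implications by exploiting the orbit structure of a cyclic action on the positions of $T$, where the order of the action equals the common value $d = \ord(\pi_T^r) = \ord(\pi_T^c)$. For the ``only if'' direction, let $T$ be a simple tile of a PS and take $d$ as above, which divides $\gcd(\tau,\sigma)$ by the previous (unlabeled) lemma. The main claim is that each state of $T$ occurs in exactly $d$ positions. By Lemma~\ref{lemma: simple tiles properties} parts 1 and 2, the rows containing a fixed state $a$ are pairwise circular shifts of one another, and each such row contains $a$ exactly once. Using the minimality of the index $i$ in the definition of $\pi_T^r$, one checks that this set of rows is precisely the $\langle \pi_T^r \rangle$-orbit of any of its members: if $\mathtt{row}_k$ is a shift of $\mathtt{row}_0$ but $k$ is not a multiple of $i$ modulo $\tau$, then writing $k = qi + r$ with $0<r<i$ and iterating the relation $\mathtt{row}_{(j+i) \bmod \tau} = \pi_T^r(\mathtt{row}_j)$ produces a row $\mathtt{row}_r$ that is also a shift of $\mathtt{row}_0$, contradicting the choice of $i$. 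Since $\mathtt{row}_0$ is aperiodic, the action of $\langle \pi_T^r \rangle$ is free, so each such orbit has size $d$. Summing over all $s$ states gives $sd = \tau\sigma$, hence $s = \tau\sigma/d$.

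For the ``if'' direction, given $d \mid \gcd(\tau,\sigma)$ with $s = \tau\sigma/d \le n$, I will construct a simple tile with $s$ states explicitly. Let $\Z_d$ act on $\Z_\tau \times \Z_\sigma$ via the generator $(i,j)\mapsto (i + \tau/d,\, j + \sigma/d)$ modulo $(\tau,\sigma)$. Since $k(\tau/d) \equiv 0 \pmod{\tau}$ forces $d \mid k$, the action is free and the $\tau\sigma$ positions partition into $s$ orbits of size $d$. Pick $s$ distinct states from $\Z_n$, assign one to each orbit, and set $a_{i,j}$ to be the state of the orbit containing $(i,j)$. Freeness immediately implies that two positions sharing a row or column cannot be orbit-equivalent, so each row and column of $T$ contains distinct entries; in particular, the rows are aperiodic and $T$ uses exactly $s$ states. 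If $(a_{i,j},a_{i,j+1}) = (a_{k,m},a_{k,m+1})$, then the two equalities force $(i,j)$ and $(k,m)$ to be related by a common $\Z_d$-element, and translating both coordinates by one yields $(i+1,j+1)$ and $(k+1,m+1)$ in the same orbit, so $a_{i+1,j+1} = a_{k+1,m+1}$. This verifies uniqueness of assignment, so the rule $f$ extracted from $T$ (extended arbitrarily on pairs outside $T$) is well-defined and makes $T$ the tile of a PS under $f$; moreover it yields $p(T) = s(T)$, so $T$ is simple. Minimality of $\tau$ and $\sigma$ as periods follows from the same freeness argument: a smaller spatial period $\sigma' < \sigma$ would place $(i,j)$ and $(i,j+\sigma')$ in a single orbit, but freeness forces the $\Z_d$-element to be trivial and then $\sigma' \equiv 0 \pmod{\sigma}$, a contradiction.

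The main technical point lies in the ``only if'' direction: rigorously identifying the rows that contain a given state with a single $\langle \pi_T^r\rangle$-orbit requires careful use of the minimality in the definition of $\pi_T^r$ together with Lemma~\ref{lemma: simple tiles properties}. The remaining verifications for the ``if'' direction, although numerous, are routine consequences of the freeness of the constructed orbit action.
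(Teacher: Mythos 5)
Your proof is correct and follows essentially the same route as the paper's (which is deferred to its reference \cite{gl1}): the ``only if'' direction counts occurrences of each state via the order $d$ of $\pi_T^r$, and the ``if'' direction builds the tile from an order-$d$ circular shift, which you merely recast as a free $\Z_d$-action on positions. The one spot where the write-up is thinner than it claims is the identification of the set of rows containing a fixed state with a single $\langle\pi_T^r\rangle$-orbit: your minimality argument covers the orbit of $\mathtt{row}_0$, but for a row $\mathtt{row}_{j_0}$ in another orbit you must still rule out $\mathtt{row}_{j_0+r}$ being a circular shift of $\mathtt{row}_{j_0}$ for some $0<r<i$, e.g.\ by using part 4 of Lemma~\ref{lemma: simple tiles properties} (columns sharing a state are circular shifts, with offset $r$) to transport such a relation back to one between $\mathtt{row}_r$ and $\mathtt{row}_0$, contradicting the minimality of $i$.
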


\iffalse
\begin{proof}
See~\cite{gl1}.

Let $T = (a_{i, j})_{ i = 0,\dots, \tau - 1, j = 0, \dots , \sigma - 1}$.
Assume that $s(T) = s$ and let $d = \text{ord }(\pi_T^r)$.
Then by Lemma~\ref{lemma: simple tiles properties}, parts 1 and 2, the first $\tau/d$ rows of $T$ contain all states that are in $T$.
As a result, $s = \tau\sigma/d$ and $d = \text{ord }(\pi_T^r) \mid \gcd (\tau, \sigma)$.

Now assume that $d \mid \gcd (\tau, \sigma)$.
Then there exists a circular shift $\pi: \Z^\sigma \to \Z^\sigma$, such that $\text{ord }(\pi) = d$.
To form a simple tile $T$ with $s(T) = \tau\sigma/d$ states, construct a rectangle of $\tau/d$ rows and $\sigma$ columns using $\tau\sigma/d$ different states in the first $\tau/d$ rows of $T$.
Let $\mathtt{row}_{\tau/d}$ be defined by $\pi (\mathtt{row}_{0})$ and the subsequent rows are all automatically defined by the maps that are assigned in the first $\tau/d$ rows, by Lemma~\ref{lemma: properties of tile}, part 1.
\end{proof}\fi

The above lemma gives the possible values of $s(T)$ for a simple tile $T$ and the next one enumerates the number of simple tiles of PS containing $s$ different states.

\begin{lem} \label{lemma: counting simple tile}
The number of simple tiles of PS with temporal periods $\tau$ and spatial period $\sigma$ containing $s$ states is $\displaystyle \varphi(d)\binom{n}{s}(s - 1)!$, where $d = \tau \sigma/s$.
\end{lem}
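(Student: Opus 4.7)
The plan is to count labeled simple PS tiles (those with a distinguished position $(0,0)$) and then divide by the size of the orbit under the action of cyclic row and column shifts. The structural input from Lemma~\ref{lemma: simple tiles properties} and Lemma~\ref{lemma: value of s(T)} is that a labeled simple tile $T$ with $s = \tau\sigma/d$ states is specified uniquely by: (a) the set of $s$ states drawn from $\Z_n$; (b) an arrangement of these states in the first $\tau/d$ rows with each appearing exactly once; and (c) a circular shift $\pi$ of order $d$ on $\Z^\sigma$. The remaining rows are then forced by $\mathtt{row}_{k\tau/d + j} = \pi^k(\mathtt{row}_j)$ for $0 \le j < \tau/d$ and $0 \le k < d$.

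Multiplying these choices gives $\binom{n}{s}\cdot s!\cdot \varphi(d)$ labeled tiles; the last factor counts circular shifts of order $d$ on $\Z^\sigma$. Before dividing, I need to verify that every such triple yields a valid simple PS tile of exact periods $\tau$ and $\sigma$, i.e., that it satisfies (i) uniqueness of assignment (Lemma~\ref{lemma: properties of tile}, part 1), (ii) aperiodicity of each row, and (iii) minimality of the claimed periods. Items (ii) and (iii) follow directly from the distinct-state condition on the first block: each row has $\sigma$ distinct states, and no two rows of the first block share any state, so no nontrivial pure row-shift preserves the tile and no row has a proper period. For (i), I use the block structure to reduce to the following: if two positions share the same ordered pair, they must lie in the same intra-block row index and their column indices differ by a multiple of the column shift $c^*$ defining $\pi$; a short computation then confirms that the cell below-right agrees at both positions.

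To compute the orbit size under the $\Z_\tau \times \Z_\sigma$ action, the key observation is that $(\tau/d,\, c^*)$ stabilizes $T$ and its iterates form a subgroup of order $d$. Conversely, any stabilizer element $(r, c)$ must send $\mathtt{row}_0$ to a row of $T$ that is a circular shift of $\mathtt{row}_0$; the only such rows are $\pi^k(\mathtt{row}_0) = \mathtt{row}_{k\tau/d}$, forcing $(r, c) = (k\tau/d,\, kc^*)$. Hence the stabilizer has order exactly $d$, the orbit has size $\tau\sigma/d = s$, and the number of equivalence classes is $\binom{n}{s}\, s!\, \varphi(d)/s = \binom{n}{s}(s-1)!\,\varphi(d)$, as claimed.

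The main obstacle will be the verification of (i): the uniqueness of assignment across different blocks, which relies on the consistent block structure induced by $\pi$ together with the distinctness of states within the first block. The rest of the argument is orbit-stabilizer bookkeeping together with the standard enumeration of $\varphi(d)$ circular shifts of order $d$ on $\Z^\sigma$ (compatible with Lemma~\ref{lemma: euler totient}).
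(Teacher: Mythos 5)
Your proof is correct and takes essentially the same route as the paper: both parametrize a simple tile with $s=\tau\sigma/d$ states by the choice of state set, the arrangement of all $s$ states in the first $\tau/d$ rows, and an order-$d$ circular shift (counted by $\varphi(d)$ via Lemma~\ref{lemma: euler totient}), with the remaining rows forced. The only difference is bookkeeping: you count labeled tiles ($\binom{n}{s}s!\varphi(d)$) and divide by the orbit size $s$ via an explicit orbit--stabilizer argument (stabilizer of order $d$ generated by $(\tau/d, c^\ast)$), whereas the paper folds the identification of rotated tiles directly into the factor $(s-1)!$; your version just makes that quotient step explicit.
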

\iffalse
\begin{proof}
See~\cite{gl1}.

As in the proof of Lemma~\ref{lemma: value of s(T)}, if $s(T) = s = \tau\sigma/d$, then $d = \text{ord} (\pi_T^r)$.
Moreover, there are $\displaystyle \binom{n}{s} (s - 1)!$ ways to form the first $\tau/d$ rows of $T$.
Then, to uniquely determine $T$, we need to select a circular shift $\pi: \Z^\sigma \to \Z^\sigma$ with $\text{ord }(\pi) = d$ and define $\mathtt{row}_{\tau/d}$ to be $\pi (\mathtt{row}_{0})$.
By Lemma~\ref{lemma: euler totient}, there are $\varphi(d)$ ways to do so.
\end{proof}\fi

Consider two different simple tiles $T_1$ and $T_2$ under the rule.
The following lemma provides a lower bound on the combined number of values of the rule $f$ assigned by $T_1$ \textit{and} $T_2$, in terms of the number of states.
\iffalse
If $s(T_1) = s_1$, then $p(T_1) \ge s_1$, i.e., there are at least $s_1$ values assigned by $T_1$.
If there are $s_2^\prime$ states in $T_2$ that are not in $T_1$, then there are at least $s_2^\prime$ additional values to assign.
Therefore, a lower bound of the number of values to be assigned in $T_1$ and $T_2$ is $s_1 + s_2^\prime$.
The next lemma states that we can increase this lower bound by at least $1$ when $T_1 \cap T_2 \neq \emptyset$.
This fact plays an important role in the proofs of Theorem~\ref{THEOREM: MAIN 2_1} and Theorem~\ref{THEOREM: MAIN 2_2}.
\fi
\begin{lem} \label{lemma: one more map}
Let $T_1$ and $T_2$ be two different simple tiles 
for the same rule.
If $T_1$ and $T_2$ have at least one state in common, then there exist $a_{i, j} \in T_1$ and $b_{k, m} \in T_2$ such that $a_{i, j} = b_{k, m}$ and $a_{i, j + 1} \neq b_{k, m + 1}$.
\end{lem}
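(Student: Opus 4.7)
The plan is to argue by contradiction: suppose whenever a state in $T_1$ equals a state in $T_2$, the states immediately to the right also agree. I will show this forces $T_2$ to be a spatial/temporal rotation of $T_1$, which contradicts $T_1\neq T_2$ (recall that tiles are identified up to such rotations).

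First, I would pick any shared state, say $a_{i,j}=b_{k,m}$. By the contradiction hypothesis applied repeatedly along the row, $a_{i,j+\ell}=b_{k,m+\ell}$ for every $\ell\ge 0$, so $\mathtt{row}_i$ of $T_1$ and $\mathtt{row}_k$ of $T_2$ are equal up to the circular shift by $m-j$. Second, I would propagate the equality downward: since both tiles come from the same rule $f$, the values $a_{i+1,j+1}=f(a_{i,j},a_{i,j+1})$ and $b_{k+1,m+1}=f(b_{k,m},b_{k,m+1})$ must coincide. This yields a new shared state at positions $(i+1,j+1)$ and $(k+1,m+1)$; applying the contradiction hypothesis again gives $a_{i+1,j+1+\ell}=b_{k+1,m+1+\ell}$ for all $\ell\ge 0$, so $\mathtt{row}_{i+1}$ of $T_1$ equals $\mathtt{row}_{k+1}$ of $T_2$ under the \emph{same} spatial shift $m-j$.

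Iterating this downward induction over $r=0,1,\dots,\tau-1$, I get $\mathtt{row}_{i+r}$ of $T_1$ equal to $\mathtt{row}_{k+r}$ of $T_2$ up to the circular spatial shift by $m-j$. Since row indices are read modulo $\tau$, this covers every row of both tiles. Hence $T_2$ is obtained from $T_1$ by a temporal shift of $k-i$ together with a spatial shift of $m-j$, so the two tiles represent the same PS, contradicting $T_1\neq T_2$.

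The argument only uses part 1 of Lemma~\ref{lemma: properties of tile} (uniqueness of assignment, which comes from both tiles arising from the same rule $f$) and the hypothesis that $T_1$ and $T_2$ share a state; simplicity itself is not really invoked beyond the standing setup. The one step that requires care is the bookkeeping of shifts: verifying that the \emph{same} spatial offset $m-j$ propagates to every successive row as one moves downward, so that the final identification of $T_1$ and $T_2$ is through a single pair (temporal shift, spatial shift) rather than row-dependent shifts. This is the main, though modest, obstacle and it is handled by noting that the downward propagation starts from the aligned pair $(i+r,j+r)$ and $(k+r,m+r)$, preserving the offset $(k-i,m-j)$ at each step.
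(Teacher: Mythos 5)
Your proof is correct and follows essentially the same route as the paper's: propagate the assumed agreement of right-neighbors along a row to conclude that $\mathtt{row}_i$ of $T_1$ and $\mathtt{row}_k$ of $T_2$ coincide up to a circular shift, and then conclude the tiles are identical because they obey the same rule. The only difference is that you spell out the last step (the downward propagation of the offset via $f$), which the paper leaves implicit.
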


\iffalse
\begin{proof}
See~\cite{gl1}.

As $T_1$ and $T_2$ have at least one state in common, we may pick $a_{i, j} \in T_1$ and $b_{k, m} \in T_2$, such that $a_{i, j} = b_{k, m}$.
If $a_{i, j + 1} \neq b_{k, m + 1}$, then we are done.
Otherwise, we repeat this procedure for $a_{i, j + 1}$ and $b_{k, m + 1}$ and see if $a_{i, j + 2} = b_{k, m + 2}$.
We repeat this procedure until we find two pairs such that $a_{i, j + q} = b_{k, m + q}$ and $a_{i, j + q + 1} \neq b_{k, m + q + 1}$.
If we fail to do so, then $\mathtt{row}_i$ in $T_1$ and $\mathtt{row}_k$ in $T_2$ must be equal, up to a circular shift.
This implies that $T_1$ and $T_2$ must be the same since they are tiles for same rule, a contradiction.
\end{proof}
\fi

As a result, if $s(T_1) = s_1$, then $p(T_1) \ge s_1$, i.e., there are at least $s_1$ values assigned by $T_1$.
If there are $s_2^\prime$ states in $T_2$ that are not in $T_1$, then there are at least $s_2^\prime$ additional values to assign.
With the above lemma, a lower bound of the number of values to be assigned in $T_1$ and $T_2$ is $s_1 + s_2^\prime + 1$.

\section{Decidability and WRPS} \label{section: decidability and wrps}
%Recall the definition of right-extension, label digraph and Algorithm~\ref{algorithm: ps from label digraph} for obtaining PS from label digraph.
In order for a PS to be weakly robust, we need one more condition on the directed cycle in the label digraph, which requires that each label \textit{decides} its unique child. 
To be more accurate, let $A$ and $B$ be two labels. Assume that 
at a site $k\in \Z$ the temporal evolution 
of the states, arranged vertically, is the repeated label $A$: $a_0\dots a_{\tau - 1}a_0\dots a_{\tau - 1}\dots$.  Suppose 
that the states at site $k + 1$ eventually ``converge'' to repetition of $B$: $b_0\dots b_{\tau - 1}b_0\dots b_{\tau - 1}\dots$, regardless of the initial state at site $k + 1$. In this 
case, we say that $A$ decides $B$, and then it is clear that $A$ does not decide $C$ for any other length-$\tau$ label $C$ that is not equal to $B$ up to a circular shift. We now provide a more formal definition. 

\begin{defi} \label{definition: decidability}
Let $A = a_0\dots a_{\tau - 1}$ and $B = b_0\dots b_{\tau - 1}$ be two length-$\tau$ labels. 
We call that label $A$ \textbf{decides} $B$, denoted as $A\Rightarrow B$, if the following two conditions are satisfied:
\begin{enumerate}
\item label $A$ right-extends to $B$, i.e., $A\to B$;

\item for an arbitrary $c_0\in \mathbb{Z}_n$, recursively define $c_{j + 1} = f(a_{j\mod \tau}, c_j)$; then there exists a $j \ge 0$ such that $c_{j \mod \tau} = b_{j \mod \tau}$.
\end{enumerate}
\end{defi}

The following proposition, analogous to Proposition 2.2 in~\cite{gravner2012robust}, provides an algorithm to verify whether a PS is weakly robust.

\begin{prop}
A tile is a WRPS if and only if each column decides the column to its right.
\end{prop}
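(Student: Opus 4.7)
The plan is to prove the two directions of the equivalence separately. The key observation is that the decision relation $\Rightarrow$ encodes precisely the local propagation condition needed for the periodic structure to invade an arbitrary environment at uniformly positive speed: if each column decides its right neighbor, the seam of agreement advances by one site within a uniformly bounded number of time steps, while if some column fails to decide, a single perturbation at that site can never be repaired.

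For the sufficiency direction I would take any proper $\eta_0$ agreeing with $\xi_0$ on $x\le y$ and argue by induction on the number of columns to the right of $y$ that have ``healed''. First note that site $y$ evolves identically in $\eta$ and $\xi$ for all time, since the update at site $y$ depends only on sites $y-1$ and $y$, which already agree. Then apply the decision property to the adjacent tile columns assigned to positions $y$ and $y+1$: starting from $c_0=\eta_0(y+1)$, the orbit $c_{t+1}=f(\xi_t(y),c_t)$ must hit the correct value $\xi_t(y+1)$ after finitely many steps, and since the pair $(t\bmod\tau,c_t)$ lives in the finite set $\Z_\tau\times\Z_n$, this healing time is bounded by a uniform constant $M\le\tau n$ independent of $\eta_0$ and of $y$. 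Once site $y+1$ has healed, the same argument applied to the pair $(y+1,y+2)$ heals site $y+2$ within $M$ further steps, and so on. Iterating gives $s_t\ge y+\lfloor t/M\rfloor$ for all large $t$, hence $v(\eta_0)\ge 1/M$ uniformly over proper $\eta_0$, which proves the PS is weakly robust with $v\ge 1/M$.

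For the necessity direction I would argue the contrapositive: if some tile column $A$ at position $y$ fails to decide its successor $B$, then there is a $c_0\in\Z_n$ whose orbit $c_{t+1}=f(a_{t\bmod\tau},c_t)$ never satisfies $c_{j}=b_{j\bmod\tau}$. Build $\eta_0$ by copying $\xi_0$ on $x\le y$, setting $\eta_0(y+1)=c_0$, and choosing the values at $x>y+1$ to prevent large coincidental agreements; since column $y$ of $\eta$ stays locked to $\xi$ and column $y+1$ traces out exactly the bad orbit, one has $\eta_t(y+1)\ne\xi_t(y+1)$ for every $t$, so the expansion front cannot advance past $y$ and $v(\eta_0)=0$, contradicting weak robustness. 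The main delicate point is that $s_t=\sup\{x:\eta_t(x)=\xi_t(x)\}$ is a supremum and could in principle be $+\infty$ because of accidental matches far to the right of the frozen site $y+1$; this is the principal obstacle. I would resolve it either by choosing $\eta_0(x)$ for $x>y$ to force pointwise persistent disagreement (using, when possible, a state outside the tile, or exploiting the aperiodicity of rows from Lemma~\ref{lemma: properties of tile}), or by reading $s_t$ as the rightmost site of contiguous agreement from the left, in keeping with the description of $v(\eta_0)$ as ``the rate at which spatial periodicity expands''. Once this interpretation is pinned down the remaining bookkeeping is routine.
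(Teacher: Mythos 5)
Your argument is correct and takes essentially the same route as the paper: the sufficiency direction is the paper's converse (heal the columns one at a time, with the healing time uniformly bounded by roughly $\tau n$, giving expansion velocity at least $1/(\tau n)$), and the necessity direction is the paper's contrapositive construction that plants a non-converging state $c_0$ next to a column failing to decide its successor, so that site never agrees and the front is stuck. The subtlety you flag about $s_t$ being a supremum is not addressed in the paper either (its perturbation alters a single site of $\xi_0$, so under the literal definition the supremum is infinite); the intended reading is the rightmost site of contiguous agreement from the left, which is exactly the interpretation you adopt.
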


\begin{proof}
Assume that a tile $T = (a_{i, j})$ is a WRPS with columns $A_j$, 
$j = 0, \dots, \sigma - 1$. 
Let $\eta$ be the initial configuration formed by doubly infinite repetition of $a_{0, 0}\dots a_{0, \sigma - 1}$.  
If $A_j = a_{0, j}\dots a_{\tau - 1, j}$ does not decide $A_{j + 1} = a_{0, j + 1}\dots a_{\tau - 1, j + 1}$, for some $j = 0, \dots, \tau - 1$, then there exists a $c_0 \in \Z_n$ such that in the position to the right of $A_j$, the states do not converge to a repetition of
$A_{j + 1}$. 
Now, construct an initial configuration  $\eta^\prime$ 
by replacing one $a_{0, j + 1}$ by $c_0$ in $\eta$. 
Then $\eta^\prime$ is proper for $\eta$, but the advance of the spatial period is stopped, thus $v(\eta^\prime) = 0$ and $T$ cannot be weakly robust.

Conversely, note that if label $A_j$ decides $A_{j + 1}$, then for any $c_0 \in \Z_n$ to the right of $a_{0, j}$, the label converges to $A_{j + 1}$ within $n\tau$ iterations.
Thus the expansion velocity must be at least $1/(\tau n)$.
\end{proof}

Recall that by Lemma~\ref{lemma: properties of tile}, a tile of a PS 
does not have periodic rows.
The following lemma concludes that a periodic label cannot be a part of WRPS tile, since otherwise the temporal period of the WRPS is reduced. 

\begin{lem} \label{lemma: irreducibility of labels}
If $T$ is a tile of WRPS of period $\tau$, then every column has minimal period $\tau$.
\end{lem}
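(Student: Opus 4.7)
The plan is to argue by contradiction. Suppose some column of $T$, say $A_k = a_{0,k}\dots a_{\tau-1,k}$, has minimal period $d$ with $d\mid\tau$ and $d<\tau$. By the preceding proposition, weak robustness forces $A_k \Rightarrow A_{k+1}$. I will use the decidability condition, applied to one carefully chosen seed $c_0$, to show that the shorter period $d$ propagates from $A_k$ to $A_{k+1}$. Iterating around the cycle of $\sigma$ columns, every column will inherit period dividing $d$, so the whole tile has temporal period at most $d<\tau$, contradicting the minimality of $\tau$ as the temporal period of $T$.

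The key step is the seed choice. Let $b_i = a_{i\mod\tau,\,k+1}$ denote the entries of $A_{k+1}$ extended $\tau$-periodically. I would set $c_0 = b_d$. Because $a_{\cdot,k}$ has period $d$, the identity $a_{d,k}=a_{0,k}$ combined with $b_{d+1}=f(a_{d,k},b_d)$ gives $c_1 = f(a_{0,k},b_d)=b_{d+1}$, and by a straightforward induction $c_i = b_{i+d}$ for all $i\ge 0$. The decidability condition then yields some $i$ with $c_{i\mod\tau}=b_{i\mod\tau}$, i.e.\ $b_{i+d}=b_i$. By the determinism of the rule, this equality automatically propagates to all larger indices, and combining with the $\tau$-periodicity of $b$ gives $b_{i}=b_{i+d}$ for every $i$. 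Hence $A_{k+1}$ has period dividing $d$.

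Since this argument applies to every adjacent pair of columns and the columns form a spatial cycle of length $\sigma$, iterating shows that all $\sigma$ columns have period dividing $d$; therefore $T$ itself has temporal period at most $d<\tau$, the desired contradiction. I expect the only nontrivial point to be bookkeeping around the two periodicities (the length-$\tau$ column viewed as a $\tau$-periodic sequence in $i$, versus the minimal period $d$): one must verify that $d\mid\tau$ (forced by $A_k$ being $d$-periodic as a length-$\tau$ word), so that $a_{d,k}=a_{0,k}$ and $\gcd(d,\tau)=d$, which is what makes the seed choice $c_0=b_d$ work cleanly. Everything else is routine use of determinism and Definition~\ref{definition: decidability}.
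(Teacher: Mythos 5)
Your proof is correct and follows essentially the same route as the paper: the paper phrases the key step as "a $d$-periodic label that decides $B$ also decides the circular shift of $B$ by $d$," which, once unpacked, is exactly your computation with the seed $c_0=b_d$ forcing $b_i=b_{i+d}$, followed by the same induction around the cycle of columns to contradict minimality of $\tau$. Your write-up just makes explicit the seed choice and the propagation-by-determinism step that the paper leaves as "clear."
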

\begin{proof}
Assume that $A$ is a label of length $\tau$ that is formed by concatenating shorter label $A^\prime$ that has length $\tau^\prime$.
It is clear that if $A \Rightarrow B = b_0 \dots b_{\tau - 1}$, $A$ also decides the circular shift $b_{\tau^\prime} b_{\tau^\prime - 1} \dots b_{\tau}b_{0} \dots b_{\tau^\prime - 1}$.
This implies that $b_0 = b_{\tau^\prime}$, $b_1 = b_{\tau^\prime + 1}$, etc.
That is, $B$ is also periodic with period $\tau^\prime$.
By induction, every label in $T$ is periodic with period $\tau^\prime$, thus $T$ is temporally reducible.
\end{proof}

In a label digraph $D_{\tau, f}$, we call an arc $A\to B$ \textbf{deciding arc} if $A\Rightarrow B$ and a directed cycle \textbf{deciding cycle} if all the arcs contained in this cycle are deciding arcs. 
The following algorithm finds all WRPS of temporal period $\tau$ for rule $f$.

\begin{alg}\label{algorithm: wrps from label digraph}
\quad \\
\noindent\rule{\textwidth}{1pt}
\begin{algorithm}[H] %\label{algorithm: generating random DEC}
\SetKwInOut{Input}{input}
\SetKwInOut{Output}{output}
\SetKw{Print}{print}
\SetKw{Return}{return}
\DontPrintSemicolon

\Input{
Label digraph $D_{\tau, f}$ of $f$ with temporal period $\tau$\;
}
\;
Find all deciding cycles in $D_{\tau, f}$\;
\For{each deciding cycle $A_0 \Rightarrow A_1 \Rightarrow \dots \Rightarrow A_{\sigma - 1}\Rightarrow A_0$}{form the tile $T$   
by placing labels $A_0, A_1,\ldots,  A_{\sigma - 1}$ on successive columns.\;
	\If{both spatial and temporal periods of $T$  are minimal}{
		\Print $T$ as a WRPS\;
	}
}
\end{algorithm}

\noindent\rule{\textwidth}{1pt}
\end{alg}

\section{Decidability Probability} \label{section: decidability probability}
We call a label $A= a_0\dots a_{\tau - 1}$ \textbf{simple} if $a_i \neq a_j$ for $i \neq j$.
We next prove the main result regarding the probability of the decidability of simple labels.

\begin{thm} \label{theorem: simple label deciding probability}
Fix a number of states $n$ and a $\tau \le n$. 
Let $A= a_0\dots a_{\tau - 1}$ be a simple label with length $\tau$ and $B=b_0\dots b_{\tau - 1}$ be any other label (not necessarily simple) of length $\tau$. 
Then 
$$\mathbb{P}\left(A \Rightarrow B\right) = \frac{n^\tau - (n - 1)^\tau}{n^\tau} \cdot \frac{1}{n^\tau}.$$
\end{thm}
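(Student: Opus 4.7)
The plan is to factor the probability as $\mathbb{P}(A \Rightarrow B) = \mathbb{P}(A \to B) \cdot \mathbb{P}(A \Rightarrow B \mid A \to B)$. The first factor equals $1/n^\tau$: since $A$ is simple, the $\tau$ symbols $a_0,\dots,a_{\tau-1}$ are pairwise distinct, hence the pairs $(a_i,b_i)$ are pairwise distinct in $\Z_n\times\Z_n$, so the events $\{f(a_i,b_i)=b_{i+1}\}$ are independent with probability $1/n$ each. Thus it remains to show the conditional probability equals $(n^\tau-(n-1)^\tau)/n^\tau$.

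To handle the conditional probability, I reformulate the problem in terms of a dynamical system. Define $S=\Z_\tau\times\Z_n$ and $F\colon S\to S$ by $F(i,c)=\bigl((i+1)\bmod\tau,\,f(a_i,c)\bigr)$. On the event $A\to B$, the set $\mathcal{B}=\{(i,b_i):i\in\Z_\tau\}$ is a cycle of $F$ of length $\tau$. A short argument shows that, conditional on $A\to B$, the condition $A\Rightarrow B$ is equivalent to $\mathcal{B}$ being the \emph{unique} cycle of the functional graph of $F$: once a trajectory enters $\mathcal{B}$ it stays (by $A\to B$), and every trajectory in the finite state space $S$ must eventually enter some cycle. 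Given $A\to B$, the remaining $\tau(n-1)$ values $f(a_i,c)$ with $c\ne b_i$ are iid uniform on $\Z_n$, so I will count the configurations in which $F$ has no cycle other than $\mathcal{B}$.

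I compute this count by inclusion--exclusion over \emph{bad} cycles of $F$, meaning cycles visiting only non-$\mathcal{B}$ vertices. Any such cycle has length $k\tau$ for some $k\ge 1$, using exactly $k$ distinct non-$\mathcal{B}$ vertices in each layer $L_i=\{i\}\times\Z_n$. The key bijective step is that an unordered set of pairwise vertex-disjoint bad cycles with per-layer total $K$ corresponds to a tuple $(S_0,\dots,S_{\tau-1},\sigma_0,\dots,\sigma_{\tau-1})$ in which $S_i\subseteq L_i\setminus\{(i,b_i)\}$ has size $K$ and each $\sigma_i\colon S_i\to S_{i+1}$ is a bijection. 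Moreover, the number of cycles in the set equals the cycle count $c(\pi)$ of the composition $\pi=\sigma_{\tau-1}\circ\dots\circ\sigma_0\in\mathrm{Sym}(S_0)$, and exactly $(K!)^{\tau-1}$ tuples realize any given $\pi$.

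The main obstacle and crucial simplification is the signed permutation identity $\sum_{\pi\in\mathrm{Sym}_K}(-1)^{c(\pi)}=\prod_{j=0}^{K-1}(-1+j)$, coming from $\sum_\pi x^{c(\pi)}=x(x+1)\cdots(x+K-1)$ specialized at $x=-1$. This product equals $1$ for $K=0$, equals $-1$ for $K=1$, and vanishes for $K\ge 2$ (because the factor $j=1$ is zero). Plugging this into the inclusion--exclusion sum causes it to collapse: only $K\in\{0,1\}$ contribute. The $K=0$ term gives the total count $n^{\tau(n-1)}$, while the $K=1$ term contributes $-(n-1)^\tau\cdot n^{\tau(n-2)}$ (since there are $(n-1)^\tau$ bad cycles of length $\tau$ and each fixes $\tau$ entries of $f$, leaving $n^{\tau(n-2)}$ free). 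Hence the favorable count is $n^{\tau(n-2)}\bigl(n^\tau-(n-1)^\tau\bigr)$, and dividing by $n^{\tau(n-1)}$ yields the claimed conditional probability.
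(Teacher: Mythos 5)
Your proposal is correct, and it reaches the paper's formula by a genuinely different route for the key enumeration. The paper also works with the functional digraph $G_{\tau,n}(f,A)$ (its ``label assignment digraph'') and reduces $A\Rightarrow B$ to the event that the $B$-cycle is the unique cycle, but it then counts the favorable digraphs \emph{constructively}, layer by layer: it chooses, for each layer, how many nodes are mapped into the already-built basin of $(0,b_0)$, handles the last layer by adding arcs one at a time so as never to close a second cycle, and evaluates the resulting nested binomial sums by an inductive identity (Lemma~\ref{lemma: combinatorics identity}), following the counting technique of the cited reference; dividing by $n^{\tau n}$ then gives the stated probability. You instead factor $\mathbb{P}(A\Rightarrow B)=\mathbb{P}(A\to B)\,\mathbb{P}(A\Rightarrow B\mid A\to B)$, use the independence coming from simplicity of $A$ to get $\mathbb{P}(A\to B)=1/n^{\tau}$ and i.i.d.\ uniform free entries, and compute the conditional probability by a sieve over forbidden cycles, where the layered structure turns a disjoint family of bad cycles into a tuple of bijections whose composite is a permutation $\pi$ of the layer-$0$ support, and the identity $\sum_{\pi\in\mathrm{Sym}_K}x^{c(\pi)}=x(x+1)\cdots(x+K-1)$ at $x=-1$ kills every term with $K\ge 2$, leaving exactly $1-(n-1)^{\tau}/n^{\tau}$. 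Your argument is shorter and more conceptual -- it explains the clean form of the answer without the nested-sum induction -- while the paper's construction is more elementary and closer in spirit to the classical enumeration it adapts. Two small points you should make explicit in a full write-up: (i) in the inclusion--exclusion, families containing two distinct cycles that share a vertex contribute zero because each vertex has out-degree one, so only vertex-disjoint families survive; and (ii) the equivalence of ``every layer-$0$ vertex reaches the $B$-cycle'' (the definition of deciding) with ``$\mathcal{B}$ is the unique cycle'' uses that every trajectory passes through layer $0$, matching condition (2) in the paper's definition of $\mathcal{D}(A,B)$. Neither is a gap, just detail to spell out.
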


The theorem is proved in four lemmas below.
The key idea reduces to calculating the probability that a random $\tau$-partite graph is a directed \textbf{pseudo-tree}, i.e., a weakly connected directed graph that has at most one directed cycle. 
To be precise, we construct \textbf{label assignment digraph} (LAD) $G_{\tau, n}(f, A)$ of a label $A$ under a rule $f$ in the following manner.

We consider $\tau$-partite digraphs with the $i$th part denoted
by $(i,\ast)=\{(i,j): j=0, \ldots, n-1\}$, $i=0,\ldots,\tau-1$. 
The arcs of the digraph $G_{\tau, n}(f, A)$ 
are determined as follows: 
for all $i = 0, \dots, \tau - 1$ and $j = 0, \dots, n - 1$, 
there is an arc $(i, j) \to (i + 1, j^\prime)$ if $f(a_{i}, j) = j^\prime $. 
As usual, we identify $i=\tau$ with $i=0$, $i={\tau + 1}$ with $i=1$, etc.
We next state the conditions for $G_{\tau, n}(f, A)$ that characterize when $A\to B$ and when $A \Rightarrow B$.
 
%
%The graph $G_{\tau, n}(f, A)$ is a $\tau$-partite graph and we name the parts of $G_{\tau, n}(f, A)$ as $(i, \ast)$, for $i = 0, \dots, \tau - 1$.
%Denote $(i, j)$ as the $j$th node in the $i$th part for $j = 0, \dots, n - 1$.
%Construct arcs $(i, j) \to (i + 1, j^\prime)$ if $f(a_{i}, j) = j^\prime $, for all $i = 0, \dots, \tau - 1$ and $j = 0, \dots, n - 1$.
%As usual, we identify $a_\tau$ with $a_0$, $a_{\tau + 1}$ with $a_1$, etc.
%We next state the conditions for $G_{\tau, n}(f, A)$, which  characterize when $A\to B$ and when $A \Rightarrow B$.

\begin{defi}
Let $A = a_0\dots a_{\tau - 1}$ and $B = b_0\dots b_{\tau - 1}$ be two labels. 
Consider the following conditions on a $\tau$-partite graph $G$:
\begin{enumerate}
\item $G$ contains the cycle $(0, b_0) \to (1, b_1) \to \dots \to (\tau - 1, b_{\tau - 1}) \to (0, b_0)$;
\item there is a directed path in $G$ from $(i, j)$ to $(0, b_0)$ for all $i = 0, \dots, \tau - 1$ and $j = 0, \dots, n - 1$. 
\end{enumerate}
The set $\mathcal{E}(A, B)$ is the set of all $\tau$-partite digraphs $G$, which satisfy condition (1) and the set $\mathcal{D}(A, B)$ is the set of all such digraphs $G$ that satisfy both conditions (1) and (2).
\end{defi}

\begin{lem} \label{lemma: lad}
Let $A = a_0\dots a_{\tau - 1}$ and $B = b_0\dots b_{\tau - 1}$ be any two labels. 
Then $A\to B$ if and only if $G_{\tau, n}(f, A)\in \mathcal{E}(A, B)$ and $A \Rightarrow B$ if and only if $G_{\tau, n}(f, A)\in \mathcal{D}(A, B)$.
\end{lem}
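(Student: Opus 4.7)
The plan is to exploit the fact that the label assignment digraph $G_{\tau,n}(f,A)$ is a \emph{functional digraph}: every vertex $(i,j)$ has out-degree exactly one, since the arc it emits is forced by $f(a_i,j)$. Consequently, from any starting vertex the forward orbit is a uniquely determined walk that eventually enters a cycle and loops forever. Once this structural observation is in place, both halves of the lemma follow by directly matching definitions.

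For the first equivalence, $A\to B$ means $f(a_i,b_i)=b_{i+1}$ for every $i$ with indices modulo $\tau$. By construction of the LAD, this is precisely the statement that each arc $(i,b_i)\to(i+1,b_{i+1})$ is present, which is exactly condition (1) of $\mathcal{E}(A,B)$. So this direction is immediate from unpacking definitions.

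For the second equivalence, I would argue both implications using the functional-graph property. Assume $A\Rightarrow B$. Then $A\to B$, so condition (1) holds. For condition (2), fix any $(i,j)$; following its unique forward walk for at most $\tau$ steps we reach a vertex of the form $(0,c)$. Applying the deciding property with this $c$ as $c_0$ gives some step at which the walk lands on a vertex $(k,b_k)$ of the cycle guaranteed by (1); out-degree one then forces the walk to traverse the whole $B$-cycle, and in particular to visit $(0,b_0)$. Conversely, if $G_{\tau,n}(f,A)\in\mathcal{D}(A,B)$, condition (1) gives $A\to B$, and for any $c_0$ the directed path from $(0,c_0)$ to $(0,b_0)$ supplied by condition (2) must coincide with the unique forward orbit, so at the step $j$ (necessarily a multiple of $\tau$) at which this orbit first hits $(0,b_0)$ we have $c_j=b_0=b_{j\bmod\tau}$, verifying decidability.

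There is no deep obstacle; the only point requiring a little care is the bookkeeping that converts ``the orbit hits \emph{some} $(k,b_k)$'' into ``the orbit hits $(0,b_0)$.'' This step uses condition (1) together with the out-degree-one property to guarantee that once the $B$-cycle is entered it is traced out in full, and it is the single non-tautological ingredient in an otherwise definitional argument.
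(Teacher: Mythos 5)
Your proof is correct, and it is essentially the argument the paper has in mind: the paper explicitly skips the proof as ``immediate from the definitions,'' and your write-up is exactly that definitional unpacking, with the out-degree-one (functional digraph) observation supplying the only non-trivial step of converting ``the orbit meets some $(k,b_k)$'' into ``the orbit reaches $(0,b_0)$'' and back.
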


We skip the proof as it follows immediately from the definitions, and instead give two examples for different rules 
by Figure~\ref{figure: extend vs decide}.
For the reader's convenience, we denote a node $(i[a_i], j)$
instead of  $(i, j)$ as in the definition.
%We remark here that, while in the original definition, a node is denoted as $(i, j)$, in the given example, we denote a node as $(i[a_i], j)$ for the reader's convenience. 
The two labels
are $A = 12$ and $B = 00$ in both cases.
Under the rule that generates the left LAD, $A\to B$, but $A \not \Rightarrow B$, i.e., $G_{\tau, n}(f, A)\in \mathcal{E}(A, B) \setminus \mathcal{D}(A, B)$; under the rule that generates the right LAD, $A \Rightarrow B$, i.e., $G_{\tau, n}(f, A)\in \mathcal{D}(A, B)$.

\begin{figure}[ht]
\centering
\begin{tikzpicture}
[
            > = stealth, % arrow head style
            shorten > = 1pt, % don't touch arrow head to node
            auto,
            node distance = 3cm, % distance between nodes
            semithick % line style
        ]

        \tikzstyle{every state}=[
            draw = black,
            thick,
            fill = white,
            minimum size = 4mm
        ]
        
        \node (1) at (0, 0) {$(0[1], 0)$};
        \node (2) at (0, -1) {$(0[1], 1)$};
        \node (3) at (0, -2) {$(0[1], 2)$};
        \node (4) at (3, 0) {$(1[2], 0)$};
        \node (5) at (3, -1) {$(1[2], 1)$};
        \node (6) at (3, -2) {$(1[2], 2)$};
        
		\path[->] (1) edge [bend left = 10] node {} (4);
        \path[->] (4) edge [bend left = 10] node {} (1);
        \path[->] (2) edge node {} (5);
        \path[->] (5) edge node {} (3);
        \path[->] (3) edge node {} (6);
        \path[->] (6) edge node {} (2);
	        
\end{tikzpicture}
\quad 
\hspace{3cm}
\begin{tikzpicture}
[
            > = stealth, % arrow head style
            shorten > = 1pt, % don't touch arrow head to node
            auto,
            node distance = 3cm, % distance between nodes
            semithick % line style
        ]

        \tikzstyle{every state}=[
            draw = black,
            thick,
            fill = white,
            minimum size = 4mm
        ]
        
        \node (1) at (0, 0) {$(0[1], 0)$};
        \node (2) at (0, -1) {$(0[1], 1)$};
        \node (3) at (0, -2) {$(0[1], 2)$};
        \node (4) at (3, 0) {$(1[2], 0)$};
        \node (5) at (3, -1) {$(1[2], 1)$};
        \node (6) at (3, -2) {$(1[2], 2)$};
        
		\path[->] (1) edge [bend left = 10] node {} (4);
		\path[->] (4) edge [bend left = 10] node {} (1);
		\path[->] (2) edge node {} (4);
		\path[->] (5) edge node {} (3);
		\path[->] (3) edge node {} (6);
		\path[->] (6) edge node {} (2);
	        
\end{tikzpicture}
\caption{Two LADs of label $A = 12$ under two different rules. 
We use $(i [a_i], j)$ to represent a node for the reader's convenience.
In the left one, $A \to 00$ but $A \not \Rightarrow 00$; in the right one, $A \Rightarrow 00$.}
\label{figure: extend vs decide}

\end{figure}
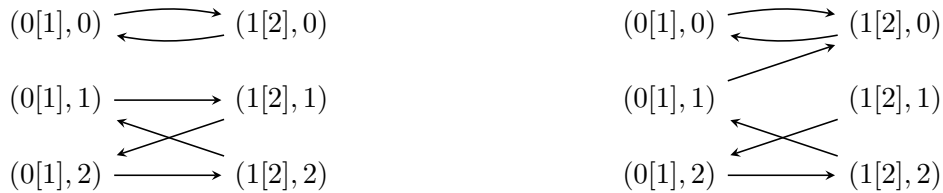

Fix a label $A = a_0\dots a_{\tau - 1}$. 
The LAD $G_{\tau, n} (f, A)$ becomes a random graph if the rule $f$ is selected randomly and we are interested in $\mathbb{P}\left(G_{\tau, n} (f, A) \in \mathcal{E}(A, B)\right)$ and $\mathbb{P}\left(G_{\tau, n} (f, A) \in \mathcal{D}(A, B)\right)$.
The case that $A$ is simple is easier as we can take advantage of independence of assignments of $f$.
To be precise, let $A$ be a simple label with length $\tau$ and $B$ be an arbitrary label with the same length.
We clearly have that $\mathbb{P}\left(G_{\tau, n}(f, A) \in \mathcal{E}(A, B)\right) = 1/n^\tau$, as the assignments on $(a_j, b_j)$'s are independent.

Next, we find $\mathbb{P}\left(G_{\tau, n} (f, A) \in \mathcal{D}(A, B)\right)$ for simple label $A$ thus complete the proof of Theorem~\ref{theorem: simple label deciding probability}.
We start by the following observation.

\begin{lem} \label{lemma: label does not matter}
If $A$ and $A^\prime$ are simple labels with the same length, 
$\mathbb{P}(A \Rightarrow B) = \mathbb{P}(A^\prime \Rightarrow B)$ for any label $B$; if $B$ and $B^\prime$ are labels with the same length, $\mathbb{P}(A \Rightarrow B) = \mathbb{P}(A\Rightarrow B^\prime )$ for any simple label $A$.
\end{lem}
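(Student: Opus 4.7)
The strategy for both halves is to construct an explicit measure-preserving bijection on the uniform rule space and invoke Lemma~\ref{lemma: lad}, which reduces $\mathbb{P}(A\Rightarrow B)$ to $\mathbb{P}(G_{\tau,n}(f,A)\in \mathcal{D}(A,B))$. Two observations will drive the argument. First, the set $\mathcal{D}(A,B)$, as defined, depends only on $B$ and the underlying digraph, not on $A$. Second, when $A = a_0\dots a_{\tau-1}$ is simple, the $\tau$ row functions $f(a_0,\cdot),\dots,f(a_{\tau-1},\cdot)$ are independent and uniform on $\Z_n\to\Z_n$, which is what will let me freely permute within and across levels.

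For the first statement, with $A$ and $A'$ both simple, the LADs $G_{\tau,n}(f,A)$ and $G_{\tau,n}(f,A')$ have the same distribution: each is a $\tau$-partite digraph in which every vertex $(i,j)$ is given an out-neighbor $(i+1, X_{i,j})$ with the $X_{i,j}$ i.i.d.\ uniform on $\Z_n$. Combined with the first observation, this yields $\mathbb{P}(A\Rightarrow B) = \mathbb{P}(A'\Rightarrow B)$ immediately.

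For the second statement, fix a simple $A$ and labels $B,B'$ of length $\tau$. For each $i=0,\dots,\tau-1$ I pick a permutation $\phi_i$ of $\Z_n$ with $\phi_i(b_i)=b'_i$ (no constraint beyond that), and define a map $\psi: f\mapsto f'$ on rules by
$$f'(a_i, j) = \phi_{i+1}^{-1}\bigl(f(a_i,\phi_i(j))\bigr), \quad i=0,\dots,\tau-1,\;j\in\Z_n,$$
with indices mod $\tau$ and $f'(x,\cdot)=f(x,\cdot)$ for $x\notin\{a_0,\dots,a_{\tau-1}\}$. Simplicity of $A$ makes this well-defined, and since each affected row function is transformed by the bijection $g\mapsto \phi_{i+1}^{-1}\circ g\circ \phi_i$, $\psi$ is a bijection on rules that preserves the uniform distribution. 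The main remaining check is that the level-wise relabeling $(i,j)\mapsto (i,\phi_i(j))$ is a digraph isomorphism from $G_{\tau,n}(\psi(f),A)$ onto $G_{\tau,n}(f,A)$; under it the distinguished cycle through $(0,b_0),\dots,(\tau-1,b_{\tau-1})$ of $\mathcal{D}(A,B)$ transports exactly to the cycle through $(0,b'_0),\dots,(\tau-1,b'_{\tau-1})$ of $\mathcal{D}(A,B')$ by the choice of $\phi_i$, while condition (2) survives because each $\phi_i$ permutes the $n$ vertices at level $i$. This gives $\{G_{\tau,n}(\psi(f),A)\in\mathcal{D}(A,B)\}=\{G_{\tau,n}(f,A)\in\mathcal{D}(A,B')\}$, and taking probabilities on both sides (using $\psi(f)\sim f$) finishes the proof.

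The only subtlety I anticipate is bookkeeping: keeping the two flavors of relabeling (the trivial observation for part one versus the level-dependent permutation for part two) straight, and pinpointing exactly where simplicity of $A$ enters, namely in ensuring that the rows $f(a_i,\cdot)$ are independent so that the level-wise transformation of rules is both well-defined and measure-preserving.
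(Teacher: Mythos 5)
Your proof is correct. Note that the paper itself offers no argument for this lemma at all --- it is stated as an immediate consequence of the symmetry of the uniform rule and is then used to set $B = 0\dots 0$ in Lemma~\ref{lemma: mapping graph deciding probability} --- so what you have done is supply the formalization the paper leaves implicit. Both of your key observations check out: the set $\mathcal{D}(A,B)$ indeed depends only on $B$ and the digraph (the label $A$ enters only through which digraph $G_{\tau,n}(f,A)$ is built), and simplicity of $A$ makes the row functions $f(a_0,\cdot),\dots,f(a_{\tau-1},\cdot)$ i.i.d.\ uniform, so $G_{\tau,n}(f,A)$ and $G_{\tau,n}(f,A')$ are equal in distribution, giving the first half. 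For the second half, your conjugation $\psi$ is well defined precisely because the $a_i$ are distinct, it is a bijection on the finite rule space and hence preserves the uniform measure, and the level-wise relabeling $(i,j)\mapsto (i,\phi_i(j))$ is an isomorphism sending the $B$-cycle and the reachability condition of $\mathcal{D}(A,B)$ for $G_{\tau,n}(\psi(f),A)$ exactly to those of $\mathcal{D}(A,B')$ for $G_{\tau,n}(f,A)$ (with the wrap-around $\phi_\tau=\phi_0$ handled consistently). This is exactly the symmetry argument the authors presumably had in mind, spelled out carefully; no gaps.
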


To find $\mathbb{P}\left(G_{\tau, n} (f, A) \in \mathcal{D}(A, B)\right)$, we adapt the counting techniques in~\cite{moh1990number} to enumerate $\mathcal{D}(A, B)$.
We start by proving the following combinatorial result.
\iffalse
\begin{lem} \label{lemma: combinatorics identity 1}
For any $x, y > 0 $, $\displaystyle \sum_{k=0}^n \binom{n}{k}x^ky^{n - k}k = xn(x + y)^{n - 1}$.
\end{lem}

\begin{proof}
Note that 
\begin{align*}
\sum_{k=0}^n \binom{n}{k} x^ky^{n - k}k 
& = (x + y)^n\sum_{k = 0}^n \binom{n}{k} \left(\frac{x}{x + y }\right)^{n - k}\left(\frac{y}{x + y}\right)^{k} k\\
& = (x + y)^n \mathbb{E}Y,
\end{align*}
where $Y \sim \Bin(n, x/(x+y))$.
\end{proof}
\fi
\begin{lem} \label{lemma: combinatorics identity}
Let $\displaystyle A_{k, \ell} = \binom{n - 1}{k}(\ell + 1)^{k}(n - 1 - \ell)^{n - 1 - k}$ , and assume that 
$k_{m + 1}$ is a non-negative integer.
Then
\begin{align*}
S_m 
& := \sum_{k_m = 0}^{n - 1} A_{k_m, k_{m + 1}}\dots \left[\sum_{k_2 = 0}^{n - 1}A_{k_2, k_3}\left[\sum_{k_1 = 0}^{n - 1}A_{k_1, k_2}(k_1 + 1)n^{n - 2}\right]\right] \\
& = n^{(m + 1)(n - 2)}\left[P_{m + 1} + k_{m + 1}(n - 1)^m \right],
\end{align*}
where
$P_m = n^m - (n - 1)^m$.
\begin{proof}
We use induction on $m$. 
Assume $m = 1$.
Observe that $$A_{k, \ell} = n^{n - 1} \mathbb{P}\left(\Bin\left(n - 1, \frac{\ell + 1}{n}\right) = k\right).$$
Therefore, 
\begin{align*}
\sum_{k_1 = 0}^{n - 1} A_{k_1, k_2}(k_1 + 1)n^{n - 2} & = n^{n - 2} \cdot n^{n - 1} \cdot \left[1 + (n - 1) \frac{k_2 + 1}{n}\right]\\
& = n^{2(n - 2)}\left[P_2 + k_2(n - 1)\right].
\end{align*}
\iffalse
\begin{align*}
& \quad \sum_{k_1 = 0}^{n - 1} \binom{n - 1}{k_1}(k_{2} + 1)^{k_1}(n - 1 - k_{2})^{n - 1 - k_1}(k_1 + 1)n^{n - 2}\\
& = n^{n - 2} \left[\sum_{k_1 = 0}^{n - 1}\binom{n - 1}{k_1}(k_{2} + 1)^{k_1}(n - 1 - k_{2})^{n - 1 - k_1}k_1 \right.\\
& \quad \left. + \sum_{k_1 = 0}^{n - 1}\binom{n - 1}{k_1}(k_{2} + 1)^{k_1}(n - 1 - k_{2})^{n - 1 - k_1}\right]\\
& = n^{n - 2}\left[(k_2 + 1)(n - 1)n^{n - 2} + n^{n - 1}\right] \\
& = n^{2(n - 2)}\left[P_2 + k_2(n - 1)\right],
\end{align*}
where the second equality follows from Lemma~\ref{lemma: combinatorics identity 1}. 
\fi
Now, by the induction hypothesis
\begin{align*}
S_m &= \sum_{k_m=0}^{n - 1} A_{k_m, k_{m + 1}}S_{m - 1}\\
&= n^{m(n - 2)}\sum_{k_m=0}^{n - 1} \binom{n - 1}{k_m}(k_{m + 1} + 1)^{k_m}(n - 1 - k_{m + 1})^{n - 1 - k_m}\left[P_m + k_{m}(n - 1)^{m - 1} \right]\\
&= n^{m(n - 2)}\left[n^{n - 1}P_m + (n - 1)^{m}(k_{m + 1} + 1)n^{n - 2}\right]\\
&= n^{(m + 1)(n - 2)}\left[nP_m + k_{m + 1}(n - 1)^{m} + (n - 1)^{m}\right]\\
&= n^{(m + 1)(n - 2)}\left[P_{m + 1} + k_{m + 1}(n - 1)^m\right],
\end{align*}
which is the desired result.
\end{proof}
\end{lem}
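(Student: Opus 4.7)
The plan is to prove the identity by induction on $m$, leveraging a probabilistic interpretation of the coefficients $A_{k,\ell}$ that reduces each nested sum to a binomial moment computation.

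The key observation I would start from is that $A_{k,\ell}$ is, up to the factor $n^{n-1}$, the probability mass function of a $\text{Bin}(n-1,(\ell+1)/n)$ random variable evaluated at $k$. Explicitly, $A_{k,\ell} = n^{n-1}\,\mathbb{P}(\text{Bin}(n-1,(\ell+1)/n)=k)$. As a consequence, for any fixed $\ell$, the two moment identities $\sum_{k=0}^{n-1} A_{k,\ell} = n^{n-1}$ and $\sum_{k=0}^{n-1} k\,A_{k,\ell} = n^{n-2}(n-1)(\ell+1)$ are available for free. Because the claimed formula $n^{(m+1)(n-2)}[P_{m+1}+k_{m+1}(n-1)^m]$ is affine-linear in $k_{m+1}$, these two moments are exactly what the induction will need.

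For the base case $m=1$, I would just apply the mean formula: $\sum_{k_1} A_{k_1,k_2}(k_1+1) = n^{n-1}[1+(n-1)(k_2+1)/n] = n^{n-2}[n+(n-1)(k_2+1)]$, multiply by the outer $n^{n-2}$, and recognize $n+(n-1) = P_2 = n^2-(n-1)^2$ so that the bracket becomes $P_2 + k_2(n-1)$. For the inductive step, assume $S_{m-1} = n^{m(n-2)}[P_m + k_m(n-1)^{m-1}]$ and compute $S_m = \sum_{k_m=0}^{n-1} A_{k_m,k_{m+1}} S_{m-1}$. The constant-in-$k_m$ part $P_m$ contributes $n^{m(n-2)}P_m \cdot n^{n-1}$, and the linear-in-$k_m$ part $(n-1)^{m-1}$ contributes $n^{m(n-2)}(n-1)^{m-1} \cdot n^{n-2}(n-1)(k_{m+1}+1)$. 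Collecting factors of $n^{(m+1)(n-2)}$ out front, I get $nP_m + (n-1)^m + k_{m+1}(n-1)^m$ inside the brackets.

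The main obstacle I anticipate is purely algebraic: I need to verify the telescoping identity $nP_m + (n-1)^m = P_{m+1}$, which should fall out from $nP_m + (n-1)^m = n^{m+1} - n(n-1)^m + (n-1)^m = n^{m+1}-(n-1)^{m+1}$. Once that identity is in hand, the inductive step closes cleanly. Beyond this, the bookkeeping of the exponents $n^{(m+1)(n-2)}$ must be handled carefully — $m(n-2)+(n-1) = (m+1)(n-2)+1$ and $m(n-2)+(n-2) = (m+1)(n-2)$ — so the $n^{n-1}$ term from the constant piece and the $n^{n-2}$ term from the linear piece combine to the common prefactor $n^{(m+1)(n-2)}$ only after one factor of $n$ is absorbed into $nP_m$. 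No deeper combinatorial insight seems to be required beyond the binomial moment identities and this bookkeeping.
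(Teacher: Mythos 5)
Your proposal is correct and follows essentially the same route as the paper: the identification $A_{k,\ell}=n^{n-1}\,\mathbb{P}\bigl(\Bin(n-1,(\ell+1)/n)=k\bigr)$, the resulting zeroth and first moment identities, induction on $m$ with the affine-in-$k_m$ hypothesis, and the telescoping step $nP_m+(n-1)^m=P_{m+1}$ are exactly the ingredients of the paper's argument. No gaps; the exponent bookkeeping you flag works out just as you describe.
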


Now, we are ready to prove the key combinatorial result.

\begin{lem} \label{lemma: mapping graph deciding probability}
Let $A$ and $B$ be labels with length $\tau$ and let $A$ be simple. 
Then $\#\mathcal{D}(A, B) = n^{\tau(n - 2)}(n^\tau - (n - 1)^\tau)$.
\end{lem}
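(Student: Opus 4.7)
The plan is to count $\#\mathcal{D}(A, B)$ by a layer-by-layer enumeration whose resulting sum matches the identity $S_{\tau-1}\big|_{k_\tau = 0}$ established in the preceding combinatorial lemma. Since $A$ is simple, the $\tau n$ rule values determining the arcs of the LAD are independent, so each of the $\tau(n-1)$ non-cycle nodes independently chooses one of $n$ arc targets in the next layer, while the cycle arcs are fixed by condition~(1) of $\mathcal{D}(A,B)$. A LAD lies in $\mathcal{D}(A, B)$ precisely when the non-cycle arcs form an in-forest rooted at the cycle nodes, i.e., no cycle other than the $B$-cycle appears in the functional digraph.

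To enumerate such configurations, I would introduce, for each $i = 1, \dots, \tau - 1$, a parameter $k_i \in \{0, \dots, n-1\}$ encoding a natural refinement of the in-forest structure at layer $i$ (e.g., the number of non-cycle nodes in layer $i$ that are attached to the cycle at a given stage of an iterative construction). The key combinatorial fact is that, given $k_{i+1}$ attached non-cycle nodes in layer $i+1$ (so $k_{i+1}+1$ attachable targets counting the cycle node) and $k_i$ in layer $i$, the number of arc assignments from the $n-1$ non-cycle nodes in layer $i$ realizing this transition is exactly $A_{k_i, k_{i+1}} = \binom{n-1}{k_i}(k_{i+1}+1)^{k_i}(n-1-k_{i+1})^{n-1-k_i}$: one chooses which $k_i$ become attached, each arcing to one of the $k_{i+1}+1$ attachable targets, while the remaining $n-1-k_i$ arc to the $n-1-k_{i+1}$ unattached targets. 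Layer $0$ is treated as the ``seed'' of the iteration, with a boundary contribution of $(k_1 + 1)\, n^{n-2}$ that I read as a Cayley-type enumeration of spanning in-trees on the $n$ nodes of layer $0$ rooted at one of the $k_1 + 1$ anchors in layer $1$; the cyclic closing from layer $\tau-1$ back to layer $0$ manifests as the boundary condition $k_\tau = 0$.

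Summing over $(k_1, \dots, k_{\tau-1})$ then gives
\[
\#\mathcal{D}(A, B) \;=\; \sum_{k_1, \dots, k_{\tau-1}=0}^{n-1} A_{k_{\tau-1}, 0}\, A_{k_{\tau-2}, k_{\tau-1}} \cdots A_{k_1, k_2}\,(k_1 + 1)\, n^{n-2} \;=\; S_{\tau-1}\big|_{k_\tau = 0},
\]
which by the preceding lemma equals $n^{\tau(n-2)}\bigl(P_\tau + 0 \cdot (n-1)^{\tau-1}\bigr) = n^{\tau(n-2)}(n^\tau - (n-1)^\tau)$, as desired. The main obstacle is the combinatorial justification of this decomposition: one must carefully define the iterative attachment procedure and the layer-$0$ boundary factor so that each LAD in $\mathcal{D}(A,B)$ is counted exactly once. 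The cyclic nature of the LAD (the closing arcs from layer $\tau-1$ back to layer $0$) is the delicate point, and the Cayley-type factor $(k_1+1)\, n^{n-2}$ must be shown to precisely absorb the layer-$0$ in-tree structure, following techniques along the lines of~\cite{moh1990number}.
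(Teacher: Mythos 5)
Your outline follows the same route as the paper: reduce membership in $\mathcal{D}(A,B)$ to the requirement that the functional digraph have no cycle other than the $B$-cycle, stratify by attachment counts $k_1,\dots,k_{\tau-1}$, use the transfer factor $A_{k_i,k_{i+1}}$ for the arcs out of layers $1,\dots,\tau-1$ with the cyclic closure encoded by $k_\tau=0$, treat layer $0$ separately through a factor $(k_1+1)n^{n-2}$, and finish with Lemma~\ref{lemma: combinatorics identity}; the identification of the total with $S_{\tau-1}$ evaluated at $k_\tau=0$ is exactly the paper's computation, and your description of $A_{k_i,k_{i+1}}$ is correct.

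The genuine gap is the layer-$0$ factor, which you flag as the main obstacle but for which your proposed justification would not work as stated. Reading $(k_1+1)n^{n-2}$ as a direct Cayley count of spanning in-trees on the $n$ layer-$0$ nodes rooted at one of $k_1+1$ anchors ignores that an arc from a layer-$0$ node to an unattached node $b\in(1,\ast)\setminus(1,\ast)^\prime$ lands in the component determined by $b$'s forward path, and distinct unattached layer-$1$ nodes may flow to the same layer-$0$ node while other components contain none of them; so the number of admissible arcs pointing into a given component is not uniform, and the enumeration is a weighted sum over rooted forests, not $(k_1+1)$ times a tree count. That this weighted sum still collapses to $(k_1+1)n^{n-2}$ is precisely the nontrivial point, which the paper establishes by adding the $t$ arcs into $(1,\ast)\setminus(1,\ast)^\prime$ one at a time: each new arc must originate at the unique out-degree-zero layer-$0$ node of a component not containing the chosen target, which simultaneously forbids extra cycles and makes the component count drop by one, giving $(n-2)\cdots(n-t-1)(n-1-k_1)^t/t!$ and hence $\sum_{t}\binom{n-2}{t}(n-1-k_1)^t(k_1+1)^{n-1-t}=(k_1+1)n^{n-2}$. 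You would also need an intrinsic definition of ``attached'' (reaching $(0,0)$ without passing through a non-cycle node of layer $0$) so that the stratification by $(k_1,\dots,k_{\tau-1})$ counts each digraph in $\mathcal{D}(A,B)$ exactly once; your phrase ``at a given stage of an iterative construction'' leaves that ambiguous.
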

\begin{proof}\
The argument we give partly follows the proof of Theorem 1 in 
\cite{moh1990number}.
Applying Lemma~\ref{lemma: label does not matter}, we may assume that $B = 0\dots 0$, without loss of generality.

First, choose a $k_{\tau - 1} \in \{0, \dots, n - 1\}$, pick $k_{\tau - 1}$ nodes in $(\tau - 1, \ast) \setminus \{(\tau - 1, 0)\}$, and form $k_{\tau - 1}$ arcs from those nodes to the node $(0, 0)$. 
There are $\displaystyle \binom{n - 1}{k_{\tau - 1}}$ choices for a fixed $k_{\tau - 1}$. 
Denote this subset of $(\tau - 1, \ast)$ together with $(\tau - 1,0)$ as $(\tau - 1, \ast)^\prime$; thus, $(\tau - 1, \ast)^\prime\subset (\tau - 1, \ast)$ are the nodes in $(\tau - 1, \ast)$ that are mapped to $(0, 0)$. 
Assign the images of the nodes in $(\tau - 1, \ast)\setminus(\tau - 1, \ast)^\prime$ to $(0, \ast) \setminus \{(0, 0)\}$, for which there are  
$(n - 1)^{n - 1 - k_{\tau - 1}}$ choices. 
So, for a fixed $k_{\tau - 1}$ to assign the image of nodes in $(\tau - 1, \ast)$, there are 
$$\binom{n - 1}{k_{\tau - 1}}(n - 1)^{n - 1 - k_{\tau - 1}}$$ 
choices.

Second, we need to assign the image of the nodes in $(\tau - 2, \ast)$ to $(\tau - 1, \ast)$. Choose a $k_{\tau - 2}\in \{0, \dots, n - 1\}$, 
pick $k_{\tau - 2}$ nodes in $(\tau - 2, \ast) \setminus {(\tau - 2,0)}$, and form $k_{\tau - 2}$ arcs from those nodes to the nodes in $(\tau - 1, 0)^\prime$. 
There are $\displaystyle \binom{n - 1}{k_{\tau - 2}}$ choices to choose those nodes for a fixed $k_{\tau - 2}$ and $(k_{\tau - 1} + 1)^{k_{\tau - 2}}$ choices to assign the images. 
Denote this subset of $(\tau - 2, \ast)$ together with $(\tau - 2,0)$ as $(\tau - 2, \ast)^\prime$. Now, the images of the nodes in $(\tau - 2, \ast) \setminus (\tau - 2, \ast)^\prime$ should be in $(\tau - 1, \ast) \setminus (\tau - 1, \ast)^\prime$, for 
which there are $\displaystyle (n - 1 - k_{\tau - 1})^{n - 1 - k_{\tau - 2}}$ choices. 
Hence, for fixed $k_{\tau-1}$ and $k_{\tau-2}$, to assign the image of the nodes in $(\tau - 2,\ast)$ to $(\tau - 1, \ast)$, there are
$$\binom{n - 1}{k_{\tau - 2}} (k_{\tau - 1} + 1)^{k_{\tau - 2}}(n - 1 - k_{\tau - 1})^{n - 1 - k_{\tau - 2}} $$
choices.

Repeat the above steps for $(\tau - 3, \ast)$, \ldots, $(1, \ast)$. 
To complete the construction, we assign the images of the nodes in $(0, \ast) \setminus \{(0, 0)\}$. 
We choose a $t \in \{0,\dots, n - 2\}$, and add $t$  
arcs from $(0, \ast) \setminus \{(0, 0)\}$ to $(1, \ast) \setminus (1, \ast)^\prime$ consecutively as specified below, 
making sure to avoid creating a cycle that does not include 
$(0,0)$.  

In the evolving digraph, a {\bf component} 
is a weakly connected component, obtained by ignoring the orientation of edges. 
First note that there are $n$ components in the current digraph; more precisely, each node of $(0,\ast)$ belongs to a
different component (possibly consisting of a single node). 

To select the first arc, pick a $b \in (1, \ast) \setminus (1, \ast)^\prime$ ($n - 1 - k_1$ choices).
There is one component that contains $(0, 0)$ and one other component containing $b$.
As a result, there are $n - 2$ components and among each of them, there is a node in  $(0, \ast) \setminus \{(0, 0)\}$ with zero out-degree. Among these $n-2$ nodes,
we select one and connect it to $b$.
Therefore, there are $(n - 2)(n - 1 - k_1)$ choices for the first arc. The addition of this arc
decreases the number of components by one.

To assign the second arc, again pick a $b \in (1, \ast) \setminus (1, \ast)^\prime$ (again $n - 1 - k_1$ choices).
Now there are exactly $n - 3$ components, among which there is a node  in $(0, \ast) \setminus \{(0, 0)\}$  with zero out-degree.
We again select one and connect it with this $b$, leading to $(n - 3)(n - 1 - k_1)$ choices.

In subsequent steps, we add an arc from $a$ to $b$, where 
$b\in (1, \ast) \setminus (1, \ast)^\prime$ is 
arbitrary, while $a\in (0, \ast) \setminus \{(0, 0)\}$  is a unique node with zero out-degree in any component not containing $b$ in the graph already constructed.
The algorithm guarantees that the 
number of components decreases by one after each arc is added, 
i.e., that a cycle not including $(0,0)$ is never created.

In the above steps we add $t$ arcs, with the number of 
choices, in order: $(n - 2)(n - 1 - k_1), (n - 3)(n - 1 - k_1)\ldots, (n - t - 1)(n - 1 - k_1)$. As any order in which they 
are assigned produces the same digraph, there are 
$$\frac{(n - 2)(n - 1 - k_1)(n - 3)(n - 1 - k_1)\cdots (n - t - 1)(n - 1 - k_1)}{t!} = \binom{n - 2}{t} (n - 1 - k_1)^t$$
choices. 
Finally, we assign the remaining $n - 1 - t$ arcs to $(1, \ast)^\prime$, for which we have $(k_1 + 1)^{n - 1 - t}$ choices. Hence, for a fixed $k_1$, to assign the arcs originating from  $(0, \ast)\setminus \{(0, 0)\}$, there are
$$\sum_{t = 0}^{n - 2}\binom{n - 2}{t}(n - 1 - k_1)^t(k_1 + 1)^{n - 1 - t}=(k_1 + 1)n^{n - 2}$$
choices, in total.
Lastly, we use Lemma~\ref{lemma: combinatorics identity}
to get
\begin{align*}
\#\mathcal{D}(A, B) &= \sum_{k_{\tau - 1}=0}^{n - 1}\binom{n - 1}{k_{\tau - 1}}(n - 1)^{n - 1 - k_{\tau - 1}}\\
& \quad  \cdot \left[\sum_{k_{\tau - 2}=0}^{n - 1} A_{k_{\tau - 2},k_{\tau - 1}}\cdots \left[\sum_{k_2=0}^{n - 1}A_{k_2, k_3}\left[\sum_{k_1=0}^{n - 1}A_{k_1, k_2}(k_1 + 1)n^{n - 2}\right]\right]\cdots \right]\\
& = n^{(\tau - 1)(n - 2)}\sum_{k_{\tau - 1}=0}^{n - 1}\binom{n - 1}{k_{\tau - 1}} (n - 1)^{n - 1 - k_{\tau - 1}}[P_{\tau - 1} + k_{\tau - 1}(n - 1)^{\tau - 2}]\\
& = n^{(\tau - 1)(n - 2)} \left[n^{n - 1}P_{\tau - 1} + (n - 1)^{\tau - 1}n^{n - 2}\right]\\
& = n^{\tau(n - 2)} P_{\tau}, 
\end{align*} 
as claimed.
\end{proof}

Now, proof of Theorem~\ref{theorem: simple label deciding probability} is straightforward.

\begin{proof}[Proof of Theorem~\ref{theorem: simple label deciding probability}]
It is clear that the number of LAD $G_{\tau, n}(f, A)$ is $n^{\tau n}$.
Then, by Lemma~\ref{lemma: mapping graph deciding probability},
$$\mathbb{P}(A \Rightarrow B) = \mathbb{P}(G_{\tau, n}(f, A) \in \mathcal{D}(A, B)) =  \frac{n^{\tau(n - 2)}[n^\tau - (n - 1)^\tau]}{n^{\tau n}} = \frac{n^\tau - (n - 1)^\tau}{n^\tau} \cdot \frac{1}{n^\tau},$$
as claimed.
\end{proof}

By Theorem~\ref{theorem: simple label deciding probability},
assuming that $A$ is simple and $B$ is any label of the same length $\tau$, we have
$$\mathbb{P}(A \Rightarrow B \bigm| A \to B) = \frac{n^\tau - (n - 1)^\tau}{n^\tau} = \frac{\tau}{n} + o\left(\frac{1}{n}\right).$$

The case when $A$ is not simple is much harder, since the parts of $G_{\tau, n}(f, A)$ are no longer independent from each other for a random rule $f$. 
While it is possible to obtain the deciding probability for a specific label using a similar method as in Theorem~\ref{theorem: simple label deciding probability}, it is hard to find a general formula or even to prove this probability is 
always $\mathcal{O}(1/n)$. 
We are, however, able to obtain the following weaker result.

\begin{thm} \label{theorem: general label deciding probability}
Let $A = a_0\dots a_{\tau - 1}$ and $B = b_0\dots b_{\tau - 1}$ be two fixed labels (not necessarily simple) with length $\tau$. 
Then 
$$\mathbb{P}\left(G_{\tau, n}(f, A) \in \mathcal{D}(A, B) \bigm| G_{\tau, n}(f, A) \in \mathcal{E}(A, B)\right) = o(1).$$
Equivalently, we have
$$\mathbb{P}\left(A \Rightarrow B\bigm| A \to B\right) = o(1).$$
\end{thm}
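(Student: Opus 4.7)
The plan is to exhibit a single LAD vertex $(0, c^*)$ that, conditional on $\mathcal{E}(A,B)$, fails to reach $(0, b_0)$ with probability $1 - o(1)$. Since $\mathcal{D}(A,B)$ requires every vertex to reach $(0, b_0)$, this immediately yields $\mathbb{P}(\mathcal{D}(A,B) \mid \mathcal{E}(A,B)) = o(1)$.

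Concretely, fix any $c^* \in \Z_n \setminus \{b_0, \ldots, b_{\tau-1}\}$ (possible once $n > \tau$) and study the forward orbit $(0, x_0), (1, x_1), (2, x_2), \ldots$ in the LAD, where $x_0 = c^*$ and $x_{k+1} = f(a_{k \bmod \tau}, x_k)$. Conditional on $\mathcal{E}(A,B)$, the entries of $f$ not forced by $\mathcal{E}$ remain iid uniform on $\Z_n$, so I would reveal them lazily as the orbit consumes them. Setting $T = \lfloor \sqrt n \log n \rfloor$, the goal is to establish two claims simultaneously, each with probability $1 - o(1)$: (i) within the first $T$ steps the orbit revisits some previously-seen LAD vertex, i.e., $x_{k_1} = x_{k_2}$ for some $k_1 < k_2 \le T$ with $k_1 \equiv k_2 \pmod \tau$, so that the orbit becomes trapped on a cycle of the LAD; and (ii) within those $T$ steps the orbit never lands on a vertex of $B$, i.e., $x_k \ne b_{k \bmod \tau}$ for all $k \le T$. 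When (i) and (ii) both hold, the cycle trapping the orbit is disjoint from $B$, and so $(0, c^*)$ cannot reach $(0, b_0)$.

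Claim (ii) is a direct union bound: each step that freshly reveals its $f$-entry lands on the specific $B$-vertex of its phase with probability $1/n$, so the total hitting probability over $T$ steps is at most $T/n = \log n/\sqrt n = o(1)$. Claim (i) is a birthday/second-moment (or Chen--Stein) estimate: the expected number of pairs $k_1 < k_2 \le T$ with $k_1 \equiv k_2 \pmod \tau$ and $x_{k_1} = x_{k_2}$ is of order $T^2/(\tau n) = \log^2 n/\tau \to \infty$, and a Chebyshev or Poisson-approximation bound yields probability $1 - o(1)$ of at least one such coincidence.

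The main obstacle is that when $A$ is not simple and $a_i = a_{i'}$ for some $i \ne i'$, the orbit can reuse an already-revealed $f$-entry without producing an LAD self-collision (a ``desync''), which breaks literal independence of the $x_k$'s. To handle this I would bound the number of desyncs within the first $T$ steps and verify that their contribution to both (i) and (ii) is of lower order. Equivalently, one can reformulate the entire argument in terms of the return-map orbit $y_m = g^m(c^*)$ with $g = f(a_{\tau-1}, \cdot) \circ \cdots \circ f(a_0, \cdot)$, noting that each $y_{m+1}$ is either a freshly uniform sample on $\Z_n$ or equals some earlier $y_{m'+1}$; in the latter case the $g$-orbit is already trapped on a cycle that, since we condition on never hitting $b_0$, cannot contain $b_0$.
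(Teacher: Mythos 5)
Your overall reduction is sound: since every vertex of the LAD has out-degree one, it suffices to exhibit a single vertex whose forward orbit gets trapped in a cycle disjoint from the $B$-cycle, and this is morally the same witness the paper uses. But the route you take to produce it has a genuine unresolved gap exactly at the point you flag. Conditional on $\mathcal{E}(A,B)$, when $A$ has repeated letters the lazy-revelation process is not a sequence of fresh uniform draws, and your proposed repairs do not work as stated. First, ``desyncs'' (reuse of an already-revealed entry $f(a_{k\bmod\tau},x_k)$ at a mismatched phase) are not a lower-order effect on your time scale: with $T=\lfloor\sqrt n\log n\rfloor$, the expected number of letter-matching, phase-mismatching state coincidences among the first $T$ steps is of order $T^2/n=\log^2 n\to\infty$, comparable to (indeed typically larger than) the number of phase-matching coincidences you rely on for claim (i). So with probability bounded away from $0$ the \emph{first} reuse of an entry is a desync, after which the orbit shadows its own past (or the forced $B$-entries) deterministically for a random stretch; both the birthday estimate in (i) and the $1/n$-per-step union bound in (ii) are contaminated, since steps driven by previously revealed or forced entries can produce collisions or land on $B$-vertices without any fresh coin flip. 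Second, your fallback reformulation via the return map $g$ is incorrect as stated: if a mid-block desync makes the orbit shadow an earlier segment through the end of the block, then $y_{m+1}$ equals an earlier orbit state at a phase that need not be $0$, so it is neither a fresh uniform sample nor an earlier $y_{m'+1}$, and the clean ``uniform-or-trapped'' dichotomy fails. These are precisely the dependencies that make the non-simple case hard, and asserting they contribute ``lower order'' terms is where the proof is missing.

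For comparison, the paper avoids sequential revelation altogether: conditioning on $\mathcal{E}(A,B)$, it counts cycles of length $\tau k$, $k\le K$, that use no repeated pair $(a_i,j)$ and no pair forced by the $B$-cycle (``simple cycles with respect to'' the $B$-cycle). For such cycles the relevant assignments are independent uniforms even when $A$ is not simple, so a first/second-moment computation plus the Chen--Stein bound (Lemma~\ref{lemma: chen-stein method}) shows the number of such cycles is approximately Poisson with mean $\sum_{k=1}^K 1/k$, hence at least one exists with probability tending to $1$ after $n\to\infty$, $K\to\infty$; any vertex on such a cycle cannot reach $(0,b_0)$. If you want to salvage your orbit-tracking argument, you would need an explicit control of the post-desync dynamics (e.g., via the letter-repetition structure of $A$), which is the step currently absent; alternatively, restricting attention to configurations with no repeated pairs, as the paper does, is the cleaner way to restore independence.
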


\begin{proof}
Again, we assume that $B = 0\dots 0$. 
We remark that, unlike Theorem~\ref{theorem: simple label deciding probability}, label $B$ here does affect the deciding probability.
However, the case of general $B$ does not significantly alter the proof but it makes it transparent, so we choose this $B$ 
for readability.

Let $a_0^\prime, \dots, a_{\ell - 1}^\prime$ be the different states in $A$ and $m_i$ be the repetition numbers of $a_i$'s, for $i = 0,\dots, \ell - 1$. 
Clearly, $\displaystyle \sum_{i = 0}^{\ell - 1}m_i = \tau$.
Let $\Cy$ be the cycle $(0, 0) \to (1, 0) \to \dots \to (\tau - 1, 0) \to (0, 0)$.  
It suffices to show that $$\mathbb{P}(\text{there are no other cycles in }G_{\tau, n}(f, A)\bigm| \Cy \in  G_{\tau, n}(f, A)) = o(1).$$
To accommodate the conditional probability,  our probability space will be a uniform choice of a digraph from 
$\mathcal{E}(A, B)$ for the remainder of the proof. 
 
Fix an integer $K\ge 1$. 
Call a cycle $\Cy^\prime = (0, j_0) \to (1, j_1) \to \dots \to (0, j_0)$ \textbf{simple with respect to $\Cy$} if:
\begin{enumerate}
\item $\Cy^\prime$ contains no parallel arcs, i.e., if $(i, j)$ and $(i^\prime, j)$ are nodes in $\Cy^\prime$, then $a_i \neq a_{i^\prime}$; and
\item 
if $(i, j)$ is on $\Cy$ and $(i^\prime, j^\prime)$ on $\Cy^\prime$, then $(a_{i^\prime}, b_{j^\prime}) \neq (a_{i}, b_{j})$.
\end{enumerate}
Let $Y_k$ be the random number of simple cycles with respect to $\Cy$ with length exactly $\tau k$ and $\displaystyle Z_K = \sum_{k = 1}^K Y_k$ be the random variable that counts the number of such cycles with length less than or equal to $\tau K$. 
We will show that, for any $K$, $\displaystyle \lim_{n \to \infty} \mathbb{P}(Z_K \ge 1) = 1 - \exp\left(-\sum_{k = 1}^K 1/k\right)$, converging to 1 as $K \to \infty$.
As a consequence, the LAD has another simple cycle asymptotically almost surely (in $n$), and this will conclude the proof.

We first compute the expectation of $Y_k$:
$$\mathbb{E}Y_k = \frac{(n - 1)_{m_1k} \cdots (n - 1)_{m_\ell k}}{k}\cdot \frac{1}{n^{\tau k}} \to \frac{1}{k}, \quad \text{as } n \to \infty.$$
Here and in the sequel, we use the falling factorial notation $(x)_n = x(x - 1)\cdots (x - n + 1)$. The first factor counts the number of simple cycles with respect to $\Cy$ and the second factor is the probability that a  fixed simple cycle with length $\tau k$ is formed.

Now, let $\displaystyle \lambda_{K} = \mathbb{E}Z_K = \sum_{k = 1}^K\mathbb{E}Y_k$.
%We use the Stein-Chen method to bound the total variation distance $d_{TV}$ between $Z_K$ and $\Poi(\lambda_K)$, a Poisson random variable with expectation $\lambda_K$. 
We use the notation $\Gamma^k$ to denote the set of all possible simple cycles with length $\tau k$ and define $\displaystyle \Gamma = \bigcup_{1 \le k \le K} \Gamma^k$ as set of such cycles with length less than or equal to $\tau K$. 
The set $\Gamma_i$ consists of cycles in $\Gamma$ that has at least one node in common with the cycle $i$. 
The random variable $I_i$ is the indicator that the cycle $i \in \Gamma$ is formed and $p_i = \mathbb{E}I_i$.
%
%Recall that by the Chen-Stein lemma, i.e., Lemma~\ref{lemma: chen-stein method}, we have
%$$d_{\text{TV}}(Z_K, \Poi(\lambda_K)) \le \min(1, \lambda_K^{-1}) \left[ \sum_{i\in \Gamma} p_i^2 + \sum_{i\in \Gamma, j\in \Gamma_i} \left(p_ip_j + \mathbb{E}\left(I_iI_j\right)\right)\right].$$

We use Lemma~\ref{lemma: chen-stein method} to find an 
upper bound for
$d_{\text{TV}}(Z_K, \Poi(\lambda_K))$. 
For the first term $\displaystyle \sum_{i\in \Gamma}p_i^2$, we have
$$\sum_{i\in \Gamma}p_i^2 = \sum_{k = 1}^K\frac{(n - 1)_{m_1k} \cdots (n - 1)_{m_\ell k}}{k}\frac{1}{n^{2\tau k}} = \mathcal{O}\left(\frac{1}{n^\tau}\right).$$
To obtain an upper bound for $\displaystyle \sum_{i \in \Gamma}\sum_{j \in \Gamma_i}p_ip_j$, we note that if $i$ is the index of a simple cycle of length $\tau r$, then we may count the number of length-$\tau k$ simple cycles that have no common vertex with the cycle $i$, that is
$$\#\left(\Gamma^k \setminus \Gamma_i\right) = \frac{(n - 1 - r)_{m_1 k} \cdots (n - 1 - r)_{m_\ell k}}{k}.$$
It immediately follows that,
\begin{align*}&\#\left(\Gamma^k \cap \Gamma_i\right) \\&= \frac{(n - 1)_{m_1k} \cdots (n - 1)_{m_\ell k} - (n - 1 - r)_{m_1 k} \cdots (n - 1 - r)_{m_\ell k}}{k}
\\& = \mathcal{O}\left(n^{\tau k-1}\right), 
\end{align*}
as the highest powers of $n$ in the numerator cancel.
Hence, for a fixed $r$ and $k$, we have
\begin{align*}
& \sum_{i \in \Gamma^r} \sum_{k \in \Gamma_i \cap \Gamma^k} p_ip_j \\
%& = \frac{(n - 1)_{m_1 r} \cdots (n - 1)_{m_\ell r}}{r} \frac{(n - 1)_{m_1k} \cdots (n - 1)_{m_\ell k} - (n - 1 - r)_{m_1k} \cdots (n - 1 - r)_{m_\ell k}}{k}\frac{1}{n^{\tau r}}\frac{1}{n^{\tau k}} \\
& = \frac{(n - 1)_{m_1 r} \cdots (n - 1)_{m_\ell r}}{r}\cdot \#\left(\Gamma^k \cap \Gamma_i\right) \cdot \frac{1}{n^{\tau r}}\cdot \frac{1}{n^{\tau k}} \\
& = \mathcal{O}\left(\frac{1}{n}\right).
\end{align*}
Therefore, the total sum
$$\sum_{i \in \Gamma}\sum_{j \in \Gamma_i} p_ip_j = \mathcal{O}\left(\frac{K^2 }{n}\right).$$
For the last term in the upper bound in Lemma~\ref{lemma: chen-stein method} , we observe that $\mathbb{E}I_iI_j = 0$ if two cycles have shared vertices. 

Now, by Lemma~\ref{lemma: chen-stein method},
$$\mathbb{P}\left(Z_K = 0\right) \le e^{ - \lambda_{K}} + \mathcal{O}\left(\frac{K^2}{n}\right) \le \frac{1}{K + 1} + \mathcal{O}\left(\frac{K^2}{n}\right).$$
Sending $n \to \infty$ and noting that $K$ is arbitrary conclude the proof.
\end{proof}

\section{Proof of Theorem~\ref{theorem: main}} \label{section: proof of theorem}

Let $T$ be a tile with $\tau$ rows and $\sigma$ columns.
Define the \textbf{rank} of $T$ to be the largest $x$ such that there exist $x$ columns of $T$ with distinct $x\tau$ states.
We denote the rank of a tile as $\rank(T)$.
For example, the tiles
$$T_1 = 
\begin{matrix}
0 & 1 & 2 & 3\\
2 & 3 & 0 & 1\\
\end{matrix},\qquad
T_2 = 
\begin{matrix}
0 & 1 & 2 & 1\\
2 & 1 & 0 & 1\\
\end{matrix}.
$$
have $\rank(T_1) = 2$ and $\rank(T_2) = 1$.

As in~\cite{gl1}, we denote by $\mathcal{R}_{\tau, \sigma, n}^{(\ell)}$ as the set of tile of WRPS that has lag $\ell$.
% of a randomly selected rule.
Thus the set of simple WRPS is $\mathcal{R}_{\tau, \sigma, n}^{(0)}$.
%As in~\cite{gl1}, for a rule $f$ (which in this chapter is randomly selected), denote $\mathcal{R}_{\tau, \sigma, n}^{(0)} \subset \mathcal{R}_{\tau, \sigma, n}$ as the set of WRPS.
We also use the notation $\mathcal{R}_{\tau, \sigma, n}^{(0, y)} \subset \mathcal{R}_{\tau, \sigma, n}^{(0)}$
to denote the set of WRPS whose tile is simple and has rank $y$. 
We use $\mathcal{T}_{\tau, \sigma, n}$ to denote the set of all PS tiles; to be more precise, this is the set of 
all $\tau\times \sigma$ arrays $T$ with state space $\mathbb{Z}_n$
that satisfy properties 
1 and 2 in Lemma~\ref{lemma: properties of tile}, so that 
there exists a CA rule with a PS given by $T$.  
We also use $\mathcal{T}_{\tau, \sigma, n}^{(0)}$ and $\mathcal{T}_{\tau, \sigma, n}^{(0, y)}$ to denote the tiles in $\mathcal{T}_{\tau, \sigma, n}$ that are simple, and that are simple with rank $y$, respectively.

Our first step is to study the probability that $\mathcal{R}_{\tau, \sigma, n}^{(0, x)}$ is not empty, where $x = \sigma/\gcd(\tau, \sigma)$.
Before we advance, we state two lemmas on simple tiles.

\begin{lem} \label{lemma: rank of simple tiles}
Let $T$ be a simple tile. Then
\begin{enumerate}
\item $\rank(T) \geq \sigma/\gcd(\sigma, \tau)$;
\item $\rank(T) = y$ if and only if $s(T) = \tau y$.
In particular, $\rank(T) = \sigma/\gcd(\sigma, \tau)$ if and only if 
$s(T)=\tau \sigma/\gcd(\sigma, \tau) = \lcm(\sigma, \tau)$.
%$T$ contains $\tau \sigma/\gcd(\sigma, \tau) = \lcm(\sigma, \tau)$ distinct states.
\end{enumerate}
\end{lem}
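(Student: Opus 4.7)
The plan is to deduce the lemma quickly from the structural properties of simple tiles already established (parts 1--4 of Lemma~\ref{lemma: simple tiles properties} together with Lemma~\ref{lemma: value of s(T)}). The key observation is that for a simple tile, the columns partition cleanly into equivalence classes of ``circular shifts of each other,'' and this partition is exactly what the rank measures.

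First, I would prove part 2. By part 3 of Lemma~\ref{lemma: simple tiles properties}, every column of $T$ contains exactly $\tau$ distinct states. By part 4, any two columns are either disjoint (no common state) or are circular shifts of one another. Consequently, the relation ``shares at least one state'' is an equivalence relation on the $\sigma$ columns, and all columns in a single equivalence class use exactly the same set of $\tau$ states (since a circular shift permutes the states of a column without altering the set). Let $y$ be the number of equivalence classes. Picking one column from each class yields $y$ pairwise disjoint columns, contributing $\tau y$ distinct states, so $\rank(T) \ge y$. Conversely, any collection of more than $y$ columns must contain two from the same class, which share all $\tau$ states, so no collection of $x > y$ columns can produce $x\tau$ distinct states; hence $\rank(T) = y$. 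Since the $y$ classes partition the state set of $T$ into $y$ blocks of size $\tau$, we get $s(T) = \tau y$, establishing the equivalence $\rank(T) = y \iff s(T) = \tau y$.

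Part 1 is then immediate from part 2 combined with Lemma~\ref{lemma: value of s(T)}: any simple PS tile satisfies $s(T) = \tau\sigma/d$ for some $d \mid \gcd(\tau,\sigma)$, so
\[
\rank(T) \;=\; \frac{s(T)}{\tau} \;=\; \frac{\sigma}{d} \;\ge\; \frac{\sigma}{\gcd(\tau,\sigma)}.
\]
The ``in particular'' portion follows because equality $\rank(T) = \sigma/\gcd(\tau,\sigma)$ occurs precisely when $d = \gcd(\tau,\sigma)$, i.e.\ when $s(T) = \tau\sigma/\gcd(\tau,\sigma) = \lcm(\tau,\sigma)$.

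There is no real obstacle here: the proof is a direct bookkeeping argument on top of the already-established structural properties of simple tiles. The only small care needed is to phrase the rank in terms of the equivalence classes of columns rather than trying to manipulate $s(T)$ and $\rank(T)$ independently, and to invoke Lemma~\ref{lemma: value of s(T)} for the divisibility bound in part 1.
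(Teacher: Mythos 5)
Your proof is correct and follows essentially the same route as the paper: both rest on parts 3 and 4 of Lemma~\ref{lemma: simple tiles properties} to get $\rank(T)=s(T)/\tau$ (the paper states this as the two inequalities $\rank(T)\ge s(T)/\tau$ and $s(T)\ge\tau\cdot\rank(T)$, you phrase it via equivalence classes of columns), and then invoke Lemma~\ref{lemma: value of s(T)} for the divisibility bound in part 1. The equivalence-class bookkeeping is just a more explicit rendering of the same argument.
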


\begin{proof}
By Lemma~\ref{lemma: simple tiles properties}, the states on each column of $T$ are distinct and two columns either share no common states or are circular shifts of each other.
As a result, $\rank(T)\ge s(T)/\tau$.
Together with Lemma~\ref{lemma: value of s(T)}, this proves (1)
and implication $(\Longrightarrow)$ of (2). The 
reverse implication in (2) follows from $s(T)\ge \tau \cdot \rank(T)$.  
\end{proof}

%To prove (1), assume $T$ to be a simple tile.
%By Lemma~\ref{lemma: value of s(T)}, $s(T)\ge \tau \sigma/\gcd(\tau, \sigma)$.
%By Lemma~\ref{lemma: simple tiles properties}, the states on each column of $T$ are distinct and two columns either share no common states or are circular shifts of each other.
%As a result, $T$ contains at least $\sigma/\gcd(\tau, \sigma)$ columns that have distinct states, giving the lower bound of the rank.
%
%To prove (2), 
%assume that $T$ has rank $\sigma/\gcd(\sigma, \tau)$. Then another application of Lemma~\ref{lemma: simple tiles properties} shows that $T$ contains $\tau \sigma/\gcd(\sigma, \tau) = \lcm(\sigma, \tau)$ distinct states.
%The other direction also follows from Lemma~\ref{lemma: simple tiles properties}.
%\end{proof}

In the sequel, we write $d = \gcd(\tau, \sigma)$, $k = \lcm(\sigma, \tau)$. 
By Lemma~\ref{lemma: rank of simple tiles}, $k$ is the number of distinct states in a simple tile with rank $x = \sigma/d$.
As before, $\varphi$ is the Euler totient function.
We index the tiles in $\mathcal{T}_{\tau, \sigma, n}^{(0, x)}$ in an arbitrary way.
Let
$$\mathfrak{T}_m = \left\{(T_i, T_j) \subset \mathcal{T}_{\tau, \sigma, n}^{(0, x)} \times \mathcal{T}_{\tau, \sigma, n}^{(0, x)}: i < j \text{ and }T_i, T_j \text{ have } m \text{ states in common}\right\}.$$
The following lemma gives the cardinality of these sets.

\begin{lem} \label{lemma: enumeration of simple tiles}
The following enumeration results hold:
\begin{enumerate}
\item the set $\mathcal{T}_{\tau, \sigma, n}^{(0, x)}$ has cardinality $\displaystyle \varphi(d)\binom{n}{k} (k - 1)!$;
\item if $m < k$, the set $\mathfrak{T}_m$ has cardinality
$$\frac{1}{2}\varphi(d)\binom{n}{k}(k - 1)! \varphi(d)\binom{k}{m} \binom{n - k}{k - m} (k - 1)! = \mathcal{O}\left(n^{2k - m}\right);$$
\item if $m = k$, the set $\mathfrak{T}_m$ has cardinality
$$\frac{1}{2}\varphi(d)\binom{n}{k}(k - 1)! \left(\varphi(d)(k - 1)! - 1\right) = \mathcal{O}\left(n^k\right).$$
\end{enumerate}
\end{lem}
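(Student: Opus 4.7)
The plan is to reduce everything to Lemma~\ref{lemma: counting simple tile} combined with Lemma~\ref{lemma: rank of simple tiles}, and then to handle parts (2) and (3) by a simple symmetry argument that disentangles the choice of the underlying state set from the combinatorial arrangement of the tile on that state set.

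For (1), part (2) of Lemma~\ref{lemma: rank of simple tiles} tells me that a simple tile $T$ has $\rank(T) = x = \sigma/d$ precisely when $s(T) = k = \tau\sigma/d$. Applying Lemma~\ref{lemma: counting simple tile} with $s = k$ yields $\tau\sigma/s = d$, giving the count $\varphi(d)\binom{n}{k}(k-1)!$ at once.

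For parts (2) and (3), the key observation I will extract from (1) is that the enumeration factors as (choice of $k$-element state set) $\times$ (tile arrangement on that set): on any fixed $k$-element state set, there are exactly $N_0 := \varphi(d)(k-1)!$ distinct simple tiles of rank $x$, independent of which state set is chosen. With this in hand, part (2) reduces to counting ordered pairs $(T_i, T_j)$ with $|T_i \cap T_j| = m$ and dividing by $2$: pick $T_i$ in $\varphi(d)\binom{n}{k}(k-1)!$ ways, pick the state set of $T_j$ by selecting $m$ states from $T_i$'s $k$ states and $k-m$ from the remaining $n-k$ (giving $\binom{k}{m}\binom{n-k}{k-m}$ options), and then pick $T_j$ among the $N_0 = \varphi(d)(k-1)!$ arrangements on that state set. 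Since $m < k$, the two state sets differ and so $T_j \neq T_i$ holds automatically. Halving yields the claimed formula, and the order $\mathcal{O}(n^{2k-m})$ follows from $\binom{n}{k}\binom{n-k}{k-m} = \Theta(n^{2k-m})$.

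Part (3) proceeds analogously. After selecting $T_i$ in $\varphi(d)\binom{n}{k}(k-1)!$ ways, $T_j$ must share the whole state set but be distinct from $T_i$, giving $N_0 - 1 = \varphi(d)(k-1)! - 1$ choices; halving produces the formula, with asymptotic $\mathcal{O}(n^k)$ determined by $\binom{n}{k}$. The only real subtlety in the whole argument is the state-set-independence of $N_0$, but this is immediate from the proof of Lemma~\ref{lemma: counting simple tile}: the construction of the first $\tau/d$ rows (using each of the $k$ states exactly once) together with the subsequent choice of $\pi_T^r$ depends only on the size of the state set, not on the identities of the states themselves.
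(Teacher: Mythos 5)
Your proposal is correct and follows essentially the same route as the paper: part (1) is Lemma~\ref{lemma: counting simple tile} (via Lemma~\ref{lemma: rank of simple tiles} identifying rank $x$ with $s=k$), and parts (2)--(3) are obtained from (1) by choosing $T_i$, then the state set of $T_j$, then one of the $\varphi(d)(k-1)!$ arrangements on it (minus the forbidden coincidence with $T_i$ when $m=k$), and halving to pass to unordered pairs. The paper states this argument only tersely; your write-up simply supplies the state-set-independence detail explicitly.
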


\begin{proof}
Part (1) follows directly from Lemma~\ref{lemma: counting simple tile}. Then, part (2) follows from (1). Part (3) also follows from (1), after we note that once we select $T_i$, we have all $k$ colors fixed and we are not allowed to select $T_j$ 
equal to $T_i$.
\end{proof}

We will also need the following consequence of 
Theorem~\ref{theorem: simple label deciding probability}.
 
\begin{lem} \label{lemma: conditional probability of simple tiles}
Let $T$ be a simple tile and $\rank(T) = y$.
Let $A_0, \dots, A_{\sigma - 1}$ be the labels in $T$.
Then we have
$$\mathbb{P}\left(A_{i} \Rightarrow A_{i + 1}, \text{ for } i = 0, \dots, \sigma - 1 \bigm| A_{i} \to A_{i + 1}, \text{ for } i = 0, \dots, \sigma - 1\right) = \left(\frac{\tau}{n} + o \left(\frac{1}{n}\right)\right)^y.$$
\end{lem}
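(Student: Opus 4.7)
The plan is to decompose the joint conditional event along the equivalence classes of columns of $T$. By parts 3 and 4 of Lemma~\ref{lemma: simple tiles properties}, the $\sigma$ columns of the simple tile $T$ partition into equivalence classes under the relation ``is a circular shift of'' (equivalently, ``shares a state with''), with columns in distinct classes having disjoint state sets. Combined with $s(T) = \tau y$ from Lemma~\ref{lemma: rank of simple tiles}, this forces exactly $y$ equivalence classes, each containing $\sigma/y$ columns, and the union of their state sets has size $\tau y$.

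The key intermediate claim is that within each equivalence class all outgoing arcs give rise to identical events. Suppose $A_i$ and $A_j$ lie in the same class with $A_j(k) = A_i(k+\delta)$ for every $k$. Then the uniqueness of assignment (Lemma~\ref{lemma: properties of tile}, part 1), applied within $T$, also forces $A_{j+1}(k) = A_{i+1}(k+\delta)$. Consequently the system $f(A_j(k), A_{j+1}(k)) = A_{j+1}(k+1)$, $k = 0,\dots,\tau-1$, is the same as $f(A_i(k), A_{i+1}(k)) = A_{i+1}(k+1)$, $k = 0,\dots,\tau-1$, merely reindexed by $\delta$; hence $\{A_i \to A_{i+1}\}$ and $\{A_j \to A_{j+1}\}$ are the same event. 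Moreover, the iteration $c_{m+1} = f(A_j(m \bmod \tau), c_m)$ is precisely the $\delta$-time shift of $c_{m+1} = f(A_i(m \bmod \tau), c_m)$, and reaching the label $A_{j+1}$ is the same as reaching $A_{i+1}$ (up to the same shift), so $\{A_i \Rightarrow A_{i+1}\}$ and $\{A_j \Rightarrow A_{j+1}\}$ also coincide.

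Picking one representative $A_{i_r}$ from each class $C_r$, $r = 1, \dots, y$, the previous step gives
$$\bigcap_{i=0}^{\sigma-1}\{A_i \to A_{i+1}\} = \bigcap_{r=1}^{y}\{A_{i_r} \to A_{i_r+1}\}, \qquad \bigcap_{i=0}^{\sigma-1}\{A_i \Rightarrow A_{i+1}\} = \bigcap_{r=1}^{y}\{A_{i_r} \Rightarrow A_{i_r+1}\}.$$
Both $\{A_{i_r} \to A_{i_r+1}\}$ and $\{A_{i_r} \Rightarrow A_{i_r+1}\}$ depend only on the restriction of $f$ to $C_r \times \mathbb{Z}_n$, because every rule-query arising in either event has first coordinate in $C_r$ (specific columns for $\to$, arbitrary $c_0 \in \mathbb{Z}_n$ for $\Rightarrow$). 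Since the classes $C_1, \dots, C_y$ are pairwise disjoint, the corresponding restrictions of the uniformly random rule are independent, so the $y$ pairs of events are mutually independent across $r$.

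Finally, each representative $A_{i_r}$ is a simple label (part 3 of Lemma~\ref{lemma: simple tiles properties}), so Theorem~\ref{theorem: simple label deciding probability} yields
$$\mathbb{P}\bigl(A_{i_r} \Rightarrow A_{i_r+1} \bigm| A_{i_r} \to A_{i_r+1}\bigr) = \frac{n^\tau - (n-1)^\tau}{n^\tau} = \frac{\tau}{n} + o\!\left(\frac{1}{n}\right)$$
for each $r$, and multiplying these $y$ identical factors produces the stated $(\tau/n + o(1/n))^y$. The main technical step is the class-invariance claim in the second paragraph; once it is done cleanly, both the reduction to $y$ events and their independence are immediate, and the probability of each factor is supplied directly by the already-proved theorem.
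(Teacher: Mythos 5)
Your argument is correct and takes essentially the same route as the paper's proof: reduce the joint events over all $\sigma$ columns to the $y$ representative columns whose state sets are pairwise disjoint, observe that the relevant events then depend on independent portions of the random rule, and apply Theorem~\ref{theorem: simple label deciding probability} to each of the $y$ factors. One small repair: in your class-invariance step, the fact that $A_{j+1}$ is the same $\delta$-shift of $A_{i+1}$ does not follow from Lemma~\ref{lemma: properties of tile} part 1 alone (which requires equality of \emph{pairs}, not just of states); it follows from simplicity --- zero lag forces equal states in $T$ to have equal right neighbors, which is exactly the column-shift structure $\pi_T^c$ behind Lemma~\ref{lemma: simple tiles properties} --- after which the rest of your reduction goes through as written.
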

\begin{proof}
Assume that the $y$ columns with $y\tau$ states have indices
in $I\subset\{0,\ldots,\sigma-1\}$ and let those columns 
have labels $A_i$, $i\in I$. As $A_i$'s do not share any states, the events 
$\{A_i\rightarrow A_{i+1}\}$, $i\in I$
are independent, and so are 
$\{A_i\Rightarrow A_{i+1}\}$, $i\in I$.
We use Lemma~\ref{lemma: simple tiles properties} and 
Theorem~\ref{theorem: simple label deciding probability}
to get
\begin{align*}
&\mathbb{P}\left(A_{i} \Rightarrow A_{i + 1}, \text{ for } i = 0, \dots, \sigma - 1 \bigm| A_{i} \to A_{i + 1}, \text{ for } i = 0, \dots, \sigma - 1\right)\\
&=\frac{\mathbb{P}\left(A_{i} \Rightarrow A_{i + 1}, \text{ for } i \in I \right)}{\mathbb{P}\left(A_{i} \to A_{i + 1}, \text{ for } i \in I \right)}\\
&=\frac{\prod_{i \in I }\mathbb{P}\left(A_{i} \Rightarrow A_{i + 1}\right)}{\prod_{i \in I }\mathbb{P}\left(A_{i} \to A_{i + 1}\right)}\\
&= \left(\frac{n^\tau - (n - 1)^\tau}{n^\tau}\cdot \frac{1}{n^\tau}\right)^y\bigg/ \left(\frac{1}{n^\tau}\right)^y\\
&= \left(\frac{\tau}{n} + o \left(\frac{1}{n}\right)\right)^y,
\end{align*}
as desired.
\end{proof}

%Now, we are ready to study the probability that $\mathcal{R}_{\tau, \sigma, n}^{(0, x)}$ is non-empty.

Theorem~\ref{theorem: main} will now 
be established through next three propositions, the first 
one of which deals with existence of WRPS with zero lag and 
minimal rank $x = \sigma/d$.

\begin{prop}\label{prop: minimal rank}
We have
$$\mathbb{P}\left(\mathcal{R}_{\tau, \sigma, n}^{(0, x)} \neq \emptyset\right) = \frac{c(\tau, \sigma)}{n^x} + o\left(\frac{1}{n^x}\right),$$
for some constant $c(\tau, \sigma)$.
\end{prop}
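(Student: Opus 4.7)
The plan is to apply the Chen--Stein Poisson approximation (Lemma \ref{lemma: chen-stein method}) to the indicators $I_T$ that a fixed tile $T\in \mathcal{T}_{\tau,\sigma,n}^{(0,x)}$ is the tile of a WRPS under the random rule $f$, with $W=\sum_T I_T$. Since $\mathbb{P}(\mathcal{R}_{\tau,\sigma,n}^{(0,x)}\neq\emptyset)=\mathbb{P}(W\ge 1)$ and the Chen--Stein error will turn out to be $o(\lambda_n)$ with $\lambda_n=\mathbb{E}W\to 0$, the conclusion will follow from $\mathbb{P}(W\ge 1)=1-e^{-\lambda_n}+o(\lambda_n)=\lambda_n+o(\lambda_n)$.

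First I would compute $\lambda_n$. For a fixed $T\in\mathcal{T}_{\tau,\sigma,n}^{(0,x)}$, part (2) of Lemma \ref{lemma: rank of simple tiles} gives $s(T)=k:=\lcm(\tau,\sigma)$; since $T$ is simple, the event that ``$T$ is a PS tile'' is the conjunction of $k$ independent assignments of $f$ and thus has probability $1/n^k$. Conditional on this, Lemma \ref{lemma: conditional probability of simple tiles} yields that all $\sigma$ columns of $T$ decide their right neighbors with probability $(\tau/n+o(1/n))^x$. Combined with the enumeration in Lemma \ref{lemma: enumeration of simple tiles}(1) and the asymptotic $\binom{n}{k}(k-1)!=n(n-1)\cdots(n-k+1)/k\sim n^k/k$,
$$\lambda_n = \varphi(d)\binom{n}{k}(k-1)!\cdot \frac{1}{n^k}\left(\frac{\tau}{n}+o(1/n)\right)^x = \frac{\varphi(d)\tau^x}{k\, n^x}+o(1/n^x),$$
so we identify $c(\tau,\sigma)=\varphi(d)\tau^x/k$.

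Next I would set up the dependency structure. Let $\Gamma_T\subset\mathcal{T}_{\tau,\sigma,n}^{(0,x)}\setminus\{T\}$ be the set of tiles sharing at least one state with $T$. When $T$ and $T'$ are disjoint, both the PS assignments and the decidability constraints for $T$ involve only values $f(a,\cdot)$ with $a$ restricted to the states of $T$ (similarly for $T'$), so $I_T$ and $I_{T'}$ are independent. The Chen--Stein bound then reduces to controlling three sums.

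The main obstacle is to show that each of these sums is $o(\lambda_n^2)=o(1/n^{2x})$, so that $\lambda_n^{-1}$ times the bracket in Lemma \ref{lemma: chen-stein method} is $o(\lambda_n)$. The first sum satisfies $\sum_T p_T^2=O(|\mathcal{T}_{\tau,\sigma,n}^{(0,x)}|/n^{2(k+x)})=O(1/n^{k+2x})=o(1/n^{2x})$ since $k\ge 1$. For $\sum_T\sum_{T'\in\Gamma_T}p_Tp_{T'}$, I would partition the pairs by the number $m\ge 1$ of shared states; Lemma \ref{lemma: enumeration of simple tiles} gives $|\mathfrak{T}_m|=O(n^{2k-m})$, so the contribution is $O(n^{2k-m}/n^{2(k+x)})=O(1/n^{m+2x})$, and summing over $1\le m\le k$ yields $O(1/n^{1+2x})$. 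The hardest term is $\sum_T\sum_{T'\in\Gamma_T}\mathbb{E}[I_TI_{T'}]$: here I would invoke Lemma \ref{lemma: one more map} to argue that when $T,T'$ share $m\ge 1$ states, their being jointly PS tiles forces at least $2k-m+1$ distinct assignments of $f$, while the decidability conditions for the $2x$ representative columns contribute at most $O((\tau/n)^{2x})$, giving $\mathbb{E}[I_TI_{T'}]=O(1/n^{2k-m+1+2x})$ and total $O(1/n^{1+2x})$. Adding the three estimates closes the Chen--Stein bound and establishes the proposition.
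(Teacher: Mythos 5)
Your route (Chen--Stein applied to the indicators $I_T$, $T\in\mathcal{T}_{\tau,\sigma,n}^{(0,x)}$) is genuinely different from the paper's, which avoids Poisson approximation here entirely: the paper obtains the upper bound from Markov's inequality applied to $\mathbb{E}\bigl(\#\mathcal{R}_{\tau,\sigma,n}^{(0,x)}\bigr)$ and the matching lower bound from Bonferroni's inequality, so it only needs the same $\lambda_n$ you compute (your constant $c(\tau,\sigma)=\varphi(d)\tau^x/k$ agrees with it) plus a single pair estimate. As written, however, your argument has a genuine gap in the third Chen--Stein sum. For tiles $T,T'$ sharing $m\ge 1$ states, the decidability events are exactly the ones that are \emph{not} independent (that is why $T'\in\Gamma_T$), and in the extreme case $m=k$ the two tiles use the identical state set; so the claim $\mathbb{E}[I_TI_{T'}]=O\bigl(n^{-(2k-m+1+2x)}\bigr)$, i.e.\ that a factor $(\tau/n)^{2x}$ for the decidability of \emph{both} tiles may simply be multiplied in, is asserted without justification and is precisely the delicate point: Lemma~\ref{lemma: conditional probability of simple tiles} covers one simple tile with no extra conditioning, not two overlapping ones.

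Moreover, you only need that stronger bound because you replaced the prefactor $\min\left(1,\lambda_n^{-1}\right)$ in Lemma~\ref{lemma: chen-stein method} by $\lambda_n^{-1}$; since $\lambda_n\to 0$, the minimum equals $1$, so it suffices that the bracketed quantity be $o(\lambda_n)=o(n^{-x})$, not $o(\lambda_n^2)$. With the prefactor read correctly, the estimate the paper itself uses closes the argument: for $(T,T')\in\mathfrak{T}_m$ with $m\ge 1$, Lemma~\ref{lemma: one more map} forces at least $2k-m+1$ distinct assignments, and one multiplies by the decidability factor $(\tau/n+o(1/n))^{x}$ for just \emph{one} of the two tiles, giving, via Lemma~\ref{lemma: enumeration of simple tiles}, $\sum_{T}\sum_{T'\in\Gamma_T}\mathbb{E}[I_TI_{T'}]=O\bigl(n^{-(x+1)}\bigr)=o(n^{-x})$; your sums (1) and (2) are fine as computed. (Even the single-tile factor deserves a word, since the conditioning includes the finitely many extra assignments coming from $T'$, but this is the same step the paper takes by combining Lemmas~\ref{lemma: one more map} and~\ref{lemma: conditional probability of simple tiles}.) In short: correct first moment and constant, a viable but heavier route than the paper's Markov--Bonferroni argument, and one key pair estimate that must be weakened to the provable version after fixing the misread prefactor.
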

\begin{proof}
We first find an upper bound by Markov inequality.

By Lemma~\ref{lemma: enumeration of simple tiles}, we have that $\# \mathcal{T}_{\tau, \sigma, n}^{(0, x)} = \displaystyle \varphi(d)\binom{n}{k} (k - 1)!$.
The probability that a tile in $\mathcal{T}_{\tau, \sigma, n}^{(0, x)}$ forms a PS is $1/n^{k}$ and the probability that the desired decidability, thus weak robustness, holds is $\left(\tau/n + o(1/n)\right)^x$ by Lemma~\ref{lemma: conditional probability of simple tiles}.
As a result, we have
$$\mathbb{E}\left(\#\mathcal{R}_{\tau, \sigma, n}^{(0, x)}\right) = \varphi(d)\binom{n}{k} (k - 1)!\frac{1}{n^{k}}\left( \frac{\tau}{n} + o\left(\frac{1}{n}\right)\right)^x = \frac{c(\tau, \sigma)}{n^x} + o\left(\frac{1}{n^{x}}\right),$$
as an upper bound. 

To find an asymptotically matching lower bound, we use the Bonferroni's inequality 
$$\mathbb{P}\left(\bigcup_i A_i\right) \ge \sum_i \mathbb{P}(A_i) - \sum_{i < j}\mathbb{P}\left(A_i\cap A_j\right).$$
Here, $A_i$ is the event that $T_i \in \mathcal{T}_{\tau, \sigma, n}^{(0, x)}$ is formed as a simple WRPS, for $\displaystyle i = 1, \dots, \varphi(d)\binom{n}{k}(k - 1)!$. 
Clearly, $\displaystyle \sum_i \mathbb{P}(A_i) = \mathbb{E}\left(\#\mathcal{R}_{\tau, \sigma, n}^{(0, x)}\right)$. 
Then it suffices to show that $\displaystyle \sum_{i < j}\mathbb{P}(A_i\cap A_j) = o\left(1/n^{x}\right)$.

For a pair of tiles $(T_i, T_j) \in \mathfrak{T}_m$, there are $2k - m$ different colors in $T_i \cup T_j$.
By Lemma~\ref{lemma: one more map}, there is at least one additional restriction on the number of maps.
Using this lemma, the enumeration result Lemma~\ref{lemma: enumeration of simple tiles}, and Lemma~\ref{lemma: conditional probability of simple tiles}, we have
\begin{align*}
\sum_{i<j}\mathbb{P}(A_i\cap A_j) 
& = \sum_{m = 0}^{k} \sum_{i < j} \mathbb{P}\left(A_i \cap A_j 
\cap\{ (T_i, T_j) \in \mathfrak{T}_m \}\right)\\
& = \sum_{m = 0}^{k} \mathcal{O}\left(n^{2k - m}\right) \frac{1}{n^{2k - m + 1}} \left(\frac{\tau}{n} + o\left(\frac{1}{n}\right)\right)^x \\
& = \mathcal{O}\left(\frac{1}{n^{x + 1}}\right).
\end{align*}
\end{proof}

Next, we consider \textit{all} simple tiles and show that among simple tiles, the WRPS with rank $x$ provide the dominant probability.

\begin{prop}\label{prop: simple wrps}
We have
$$\mathbb{P}\left(\mathcal{R}_{\tau, \sigma, n}^{(0)} \neq \emptyset\right) = \frac{c(\tau, \sigma)}{n^x} + o\left(\frac{1}{n^x}\right),$$ for the same constant $c(\tau, \sigma)$ as in Proposition~\ref{prop: minimal rank}.
\end{prop}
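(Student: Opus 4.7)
The plan is to upper-bound $\mathbb{P}(\mathcal{R}_{\tau,\sigma,n}^{(0)} \neq \emptyset)$ by decomposing $\mathcal{R}_{\tau,\sigma,n}^{(0)}$ according to rank and showing that only the minimal rank $x = \sigma/d$ contributes to leading order, then invoking Proposition~\ref{prop: minimal rank} for both the matching lower bound and the main term.

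First, I would note that by Lemma~\ref{lemma: value of s(T)} and Lemma~\ref{lemma: rank of simple tiles}(2), a simple tile must have $s(T) = \tau\sigma/d'$ for some $d' \mid d$, and its rank is then $y = \sigma/d'$. Hence the possible ranks form the set $\{\sigma/d' : d' \mid d\}$, whose minimum is $x = \sigma/d$. Writing $\mathcal{R}_{\tau,\sigma,n}^{(0)} = \bigsqcup_{y} \mathcal{R}_{\tau,\sigma,n}^{(0,y)}$ over these possible ranks, the union bound gives
$$\mathbb{P}\!\left(\mathcal{R}_{\tau,\sigma,n}^{(0)} \neq \emptyset\right) \le \mathbb{P}\!\left(\mathcal{R}_{\tau,\sigma,n}^{(0,x)} \neq \emptyset\right) + \sum_{y > x}\mathbb{P}\!\left(\mathcal{R}_{\tau,\sigma,n}^{(0,y)} \neq \emptyset\right),$$
while trivially $\mathbb{P}(\mathcal{R}_{\tau,\sigma,n}^{(0)} \neq \emptyset) \ge \mathbb{P}(\mathcal{R}_{\tau,\sigma,n}^{(0,x)} \neq \emptyset)$.

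Next, I would estimate each term in the sum by Markov's inequality, $\mathbb{P}(\mathcal{R}_{\tau,\sigma,n}^{(0,y)} \neq \emptyset) \le \mathbb{E}(\#\mathcal{R}_{\tau,\sigma,n}^{(0,y)})$. For a rank-$y$ simple tile, $s(T) = \tau y$ and $p(T) = \tau y$, so the probability that a fixed such tile is realized as a PS is $1/n^{\tau y}$. By Lemma~\ref{lemma: counting simple tile} (applied with $d' = \sigma/y$) the number of such tiles is $\varphi(d')\binom{n}{\tau y}(\tau y - 1)! = \Theta(n^{\tau y})$. Combining this with Lemma~\ref{lemma: conditional probability of simple tiles}, which yields a conditional decidability probability of $(\tau/n + o(1/n))^y$, gives
$$\mathbb{E}\!\left(\#\mathcal{R}_{\tau,\sigma,n}^{(0,y)}\right) = \Theta(n^{\tau y}) \cdot \frac{1}{n^{\tau y}} \cdot \left(\frac{\tau}{n} + o\!\left(\frac{1}{n}\right)\right)^{y} = \Theta\!\left(\frac{1}{n^{y}}\right).$$
For each $y > x$ this is $o(1/n^x)$, and since the set of admissible ranks is finite (bounded by the number of divisors of $d$), the total sum over $y > x$ is also $o(1/n^x)$.

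Finally, combining the upper bound
$$\mathbb{P}\!\left(\mathcal{R}_{\tau,\sigma,n}^{(0)} \neq \emptyset\right) \le \mathbb{P}\!\left(\mathcal{R}_{\tau,\sigma,n}^{(0,x)} \neq \emptyset\right) + o(1/n^x)$$
with the lower bound and Proposition~\ref{prop: minimal rank} yields the claim with the same constant $c(\tau,\sigma)$. I don't expect any significant obstacle here: the decomposition by rank reduces the claim to a straightforward first-moment calculation, and the key input — that the decidability penalty in Lemma~\ref{lemma: conditional probability of simple tiles} is of order $(\tau/n)^{\rank(T)}$ rather than a smaller power — ensures that higher-rank simple tiles are genuinely suppressed relative to rank-$x$ tiles.
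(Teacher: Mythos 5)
Your proposal is correct and follows essentially the same route as the paper: decompose the simple WRPS by rank, bound each higher-rank contribution via Markov's inequality using the tile count from Lemma~\ref{lemma: counting simple tile} together with the decidability factor $(\tau/n+o(1/n))^y$ from Lemma~\ref{lemma: conditional probability of simple tiles}, and conclude that only rank $x$ survives, matching Proposition~\ref{prop: minimal rank}. The only difference is that you spell out the first-moment computation slightly more explicitly than the paper does; no gap.
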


\begin{proof}
First, we note the following bounds for $\mathbb{P}(\mathcal{R}_{\tau, \sigma, n}^{(0)} \neq \emptyset)$,
$$\mathbb{P}\left(\mathcal{R}_{\tau, \sigma}^{(0, x)} \neq \emptyset\right) 
\le \mathbb{P}\left(\mathcal{R}_{\tau, \sigma, n}^{(0)} \neq \emptyset\right) 
\le \mathbb{P}\left(\mathcal{R}_{\tau, \sigma, n}^{(0, x)} \neq \emptyset\right) 
+ \sum_{y} \mathbb{P}\left(\mathcal{R}_{\tau, \sigma, n}^{(0, y)} \neq \emptyset\right),$$
where the last sum is over $y = \sigma/d^\prime$ for $d^\prime \mid \gcd(\tau, \sigma)$ and $d < \gcd(\tau, \sigma)$.
As $x < y$, we have from Lemmas~\ref{lemma: rank of simple tiles}--\ref{lemma: conditional probability of simple tiles},
\begin{align*}
\mathbb{P}\left(\mathcal{R}_{\tau, \sigma, n}^{(0, y)} \neq \emptyset\right) & \le \mathbb{E}\left(\#\mathcal{R}_{\tau, \sigma, n}^{(0, y)}\right) \\
& = \varphi(d_y)\binom{n}{k_y} (k_y - 1)!\frac{1}{n^{k_y}} \left(\frac{\tau}{n} + o\left(\frac{1}{n}\right)\right)^y\\
& = o\left(\frac{1}{n^x}\right),
\end{align*}
where, $k_y = \tau y$ is the number of states in a tile in $\mathcal{R}_{\tau, \sigma, n}^{(0, y)}$ and $d_y = \sigma/y$.
The conclusion now follows from Proposition~\ref{prop: minimal rank}.
\end{proof}

\begin{lem} \label{lemma: nonzero ell}
If $\ell>0$, then 
\begin{align*}
\mathbb{P}\left(\mathcal{R}_{\tau, \sigma, n}^{(\ell)} \neq \emptyset\right)
 = o\left(\frac{1}{n}\right).
\end{align*}
\end{lem}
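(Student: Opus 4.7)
The plan is to bound $\mathbb{P}(\mathcal{R}_{\tau, \sigma, n}^{(\ell)} \neq \emptyset) \leq \mathbb{E}(\#\mathcal{R}_{\tau, \sigma, n}^{(\ell)})$ via Markov's inequality, then to enumerate the lag-$\ell$ PS tiles by their number $s = s(T)$ of distinct states. Since $\tau$ and $\sigma$ are fixed, $s \leq \tau \sigma$ is bounded and the number of tile-shapes on a given set of $s$ states is a constant depending only on $\tau, \sigma, \ell, s$, so the number of such tiles is $O(n^s)$. Each has $p(T) = s + \ell$ distinct pairs, giving $\mathbb{P}(T \text{ is PS}) = 1/n^{s+\ell}$. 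For $\ell \geq 2$ the trivial bound $\mathbb{P}(T \in \mathcal{R}_{\tau,\sigma,n}^{(\ell)}) \leq \mathbb{P}(T \text{ is PS})$ already suffices:
\begin{equation*}
\mathbb{E}(\#\mathcal{R}_{\tau, \sigma, n}^{(\ell)}) \leq \sum_{s} O(n^s) \cdot \frac{1}{n^{s+\ell}} = O\!\left(\frac{1}{n^\ell}\right) = o\!\left(\frac{1}{n}\right).
\end{equation*}

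For $\ell = 1$ the same computation yields only $O(1/n)$, so we must extract an extra $o(1)$ factor from the decidability requirement. The idea is, for each $T$, to select one column $A_j$ of $T$ and bound the conditional probability $\mathbb{P}(A_j \Rightarrow A_{j+1} \mid T \text{ is PS})$ by $o(1)$. Two sub-cases arise. If some column $A_j$ of $T$ is non-simple (has a repeated entry), then by Lemma~\ref{lemma: irreducibility of labels} it is nonetheless aperiodic, and Theorem~\ref{theorem: general label deciding probability} gives $\mathbb{P}(A_j \Rightarrow A_{j+1} \mid A_j \to A_{j+1}) = o(1)$. If every column of $T$ is simple, then for any chosen column Theorem~\ref{theorem: simple label deciding probability} gives $\mathbb{P}(A_j \Rightarrow A_{j+1} \mid A_j \to A_{j+1}) = \tau/n + o(1/n) = O(1/n) = o(1)$. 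In either sub-case, multiplying the extra $o(1)$ factor into the first-moment estimate yields $\mathbb{E}(\#\mathcal{R}_{\tau, \sigma, n}^{(1)}) = o(1/n)$.

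The main obstacle is that the needed conditional is $\mathbb{P}(A_j \Rightarrow A_{j+1} \mid T \text{ is PS})$, whereas Theorems~\ref{theorem: simple label deciding probability} and~\ref{theorem: general label deciding probability} provide $\mathbb{P}(A_j \Rightarrow A_{j+1} \mid A_j \to A_{j+1})$; these are not a priori equal, because the full PS conditioning may fix additional rule values $f(a, c)$ with $a \in A_j$ coming from other columns $A_k$ ($k \neq j$) that share states with $A_j$. Since $\tau$ and $\sigma$ are fixed, the number of such extra constraints is bounded by $p(T) - \tau \leq \tau \sigma$, a constant in $n$. A counting argument parallel to the enumeration of pseudo-trees in Lemma~\ref{lemma: mapping graph deciding probability} (in the simple sub-case) and to the Chen-Stein cycle-count estimate in the proof of Theorem~\ref{theorem: general label deciding probability} (in the non-simple sub-case), applied to LADs with $O(1)$ pre-specified arcs, shows that the extra conditioning affects only the multiplicative constant, so the $O(1/n)$ and $o(1)$ orders are preserved and the argument sketched above goes through.
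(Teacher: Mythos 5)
Your proposal is correct and follows essentially the paper's own argument: a first-moment (Markov) bound, stratification of the lag-$\ell$ tiles by the number $s$ of distinct states, giving $O(n^s)$ tiles each realized as a PS with probability $n^{-(s+\ell)}$, times an extra $o(1)$ factor coming from the decidability requirement on a single column. The paper gets that factor in one stroke by citing Theorem~\ref{theorem: general label deciding probability} for all $\ell>0$ (that theorem is stated for arbitrary, not necessarily simple, labels, so your simple/non-simple case split for $\ell=1$ is not needed), and it does not spell out the point you raise about conditioning on the full PS event rather than on $A_j\to A_{j+1}$; your remark that only a bounded number of additional rule values are prescribed, so the pseudo-tree/cycle counts change only by constant factors, is the right way to justify that step, which the paper treats as immediate.
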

\begin{proof}
For a fixed $\ell$, let $g_{\tau, \sigma}(s)$ count the number of tiles with periods $\tau$ and $\sigma$,   and $s$ different {\it fixed\/} states. By 
Theorem~\ref{theorem: general label deciding probability},
\begin{align*}
\mathbb{P}\left(\mathcal{R}_{\tau, \sigma, n}^{(\ell)} \neq \emptyset\right)
 \le \mathbb{E}\left(\#\mathcal{R}_{\tau, \sigma, n}^{(\ell)} \right) 
& = \sum_{s=1}^{\tau\sigma}\binom{n}{s} g_{\tau, \sigma, \ell}(s) \frac{1}{n^{s + \ell}} \cdot o(1) \\
& = o\left(\frac{1}{n^\ell}\right) 
 = o\left(\frac{1}{n}\right).
\end{align*}
 \end{proof}

Next, we extend Proposition~\ref{prop: simple wrps} to cover non-simple tiles. It is here that we 
impose the condition that $\sigma \mid \tau$.

\begin{prop} \label{prop: fix periods}
If $\sigma\mid \tau$, then 
$$\mathbb{P}\left(\mathcal{R}_{\tau, \sigma, n} \neq \emptyset\right) = \frac{c(\tau, \sigma)}{n} + o\left(\frac{1}{n}\right).$$
\end{prop}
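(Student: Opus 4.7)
The plan is to decompose the event by the lag of the tile and use the two earlier pieces of information, namely Proposition~\ref{prop: simple wrps} for simple tiles and Lemma~\ref{lemma: nonzero ell} for positive lag. Write
\[
\mathcal{R}_{\tau,\sigma,n}=\bigcup_{\ell\ge 0}\mathcal{R}_{\tau,\sigma,n}^{(\ell)},
\]
and note that the union over $\ell$ is finite since $\ell\le p(T)\le \tau\sigma$. I will bound $\mathbb{P}(\mathcal{R}_{\tau,\sigma,n}\neq\emptyset)$ from above by a union bound, and from below by retaining only the $\ell=0$ term.

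The key simplification comes from the divisibility hypothesis $\sigma\mid\tau$, which gives $\gcd(\tau,\sigma)=\sigma$ and hence $x=\sigma/\gcd(\tau,\sigma)=1$. Plugging $x=1$ into Proposition~\ref{prop: simple wrps} yields
\[
\mathbb{P}\bigl(\mathcal{R}_{\tau,\sigma,n}^{(0)}\neq\emptyset\bigr)=\frac{c(\tau,\sigma)}{n}+o\!\left(\frac{1}{n}\right).
\]
This is the lower bound, since $\mathcal{R}_{\tau,\sigma,n}^{(0)}\subset\mathcal{R}_{\tau,\sigma,n}$.

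For the upper bound, I would combine the above with the contributions from positive-lag tiles, writing
\[
\mathbb{P}\bigl(\mathcal{R}_{\tau,\sigma,n}\neq\emptyset\bigr)\le \mathbb{P}\bigl(\mathcal{R}_{\tau,\sigma,n}^{(0)}\neq\emptyset\bigr)+\sum_{\ell=1}^{\tau\sigma}\mathbb{P}\bigl(\mathcal{R}_{\tau,\sigma,n}^{(\ell)}\neq\emptyset\bigr).
\]
By Lemma~\ref{lemma: nonzero ell}, each term in the finite sum is $o(1/n)$, so the sum is also $o(1/n)$. Combining with the asymptotics for $\ell=0$, the upper and lower bounds match at order $c(\tau,\sigma)/n$, finishing the proof. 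No new obstacle arises at this stage; the entire work has already been carried out in establishing Proposition~\ref{prop: simple wrps} (the hard combinatorial enumeration and the second-moment bound via Bonferroni) and Lemma~\ref{lemma: nonzero ell} (where Theorem~\ref{theorem: general label deciding probability} replaces the deciding factor by a $o(1)$ term and creates the extra $1/n$ savings from the lag). The role of $\sigma\mid\tau$ here is merely to pin the leading exponent $x$ down to $1$, so that the simple-tile contribution is exactly of order $1/n$ rather than smaller.
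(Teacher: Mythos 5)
Your proposal is correct and follows essentially the same route as the paper: use $\sigma\mid\tau$ to get $x=1$ so that Proposition~\ref{prop: simple wrps} gives the $c(\tau,\sigma)/n$ asymptotics for the zero-lag tiles, then sandwich $\mathbb{P}(\mathcal{R}_{\tau,\sigma,n}\neq\emptyset)$ between the $\ell=0$ term and the finite union bound over $\ell\le\tau\sigma$, with Lemma~\ref{lemma: nonzero ell} killing each positive-lag term at order $o(1/n)$. No gaps.
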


\begin{proof}
First, note that $\sigma \mid \tau$ implies that $x = \sigma/\gcd(\tau, \sigma) = 1$ and as a result of Proposition~\ref{prop: simple wrps}, we have 
$$\mathbb{P}\left(\mathcal{R}_{\tau, \sigma, n}^{(0)} \neq \emptyset\right) = \frac{c(\tau, \sigma)}{n} + o\left(\frac{1}{n}\right).$$
%Now consider a general $\ell > 0$.  Let $g_{\tau, \sigma, \ell}(s)$ count the number of tiles with period $\tau$ and $\sigma$, $m - s = \ell$, and $s$ different states. By 
%Theorem~\ref{theorem: general label deciding probability},
%\begin{align*}
%\mathbb{P}\left(\mathcal{R}_{\tau, \sigma, n}^{(\ell)} \neq \emptyset\right)
% \le \mathbb{E}\left(\#\mathcal{R}_{\tau, \sigma, n}^{(\ell)} \right) 
%& = \sum_{s=1}^{\tau\sigma}\binom{n}{s} g_{\tau, \sigma, \ell}(s) \frac{1}{n^{s + \ell}} \cdot o(1) \\
%& = o\left(\frac{1}{n^\ell}\right) 
% = o\left(\frac{1}{n}\right).
% 
The desired result now follows from the bounds
$$ 
\mathbb{P}\left(\mathcal{R}_{\tau, \sigma, n}^{(0)} \neq \emptyset\right) 
\le \mathbb{P}\left(\mathcal{R}_{\tau, \sigma, n} \neq \emptyset\right) 
\le \sum_{\ell = 0}^{\tau \sigma} \mathbb{P} \left(\mathcal{R}_{\tau, \sigma, n}^{(\ell)} \neq \emptyset\right)$$
and Lemma~\ref{lemma: nonzero ell}.
\end{proof}

%Now, we are ready to prove the main result, Theorem~\ref{theorem: main}.

\begin{proof}[Proof of Theorem~\ref{theorem: main}]
If $\sigma\nmid \tau$, then $x = \sigma/\gcd(\tau, \sigma) >1$, and by Proposition~\ref{prop: simple wrps} and Lemma~\ref{lemma: nonzero ell}, 
$$
\mathbb{P}\left(\mathcal{R}_{\tau, \sigma, n} \neq \emptyset\right) 
\le \mathbb{P}\left(\mathcal{R}_{\tau, \sigma,n}^{( 0)} \neq \emptyset\right) 
+ \sum_{\ell = 1}^{\tau \sigma} \mathbb{P} \left(\mathcal{R}_{\tau, \sigma,n}^{(\ell)} \neq \emptyset\right) 
= \frac{c(\tau, \sigma)}{n^x} + o\left(\frac{1}{n}\right) 
= o\left(\frac{1}{n}\right).$$
These bounds, together with Proposition~\ref{prop: fix periods}, now give the desired result: 
\begin{align*}
\frac{c(\Tau, \Sigma)}{n} + o\left(\frac{1}{n}\right)
& = \sum_{\sigma \mid \tau}\mathbb{P}(\mathcal{R}_{\tau, \sigma, n} \neq \emptyset)\\
&\le \mathbb{P}(\mathcal{R}_{\Tau, \Sigma, n} \neq \emptyset)\\
&\le \sum_{\sigma \mid \tau}\mathbb{P}(\mathcal{R}_{\tau, \sigma, n} \neq \emptyset) 
+ \sum_{\sigma\nmid \tau}\mathbb{P}(\mathcal{R}_{\tau, \sigma, n} \neq \emptyset)
\le \frac{c(\Tau, \Sigma)}{n} + o\left(\frac{1}{n}\right).
\end{align*}
\end{proof}

\section{Discussion}
Inspired by~\cite{gravner2012robust}, we prove that the probability that a randomly chosen CA has a weakly robust periodic solution with periods in the finite set $\Tau\times\Sigma$ is asymptotically $c(\Tau, \Sigma)/n$, provided that $\Tau \times \Sigma$ contains a pair $(\tau, \sigma)$ with $\sigma \mid \tau$. A natural first question is whether the divisibility 
condition may be removed.  

\begin{ques}\label{question: generalization}
Let $\mathcal{R}_{\tau, \sigma, n}$ be the set of WRPS with periods $\tau$ and $\sigma$ from a random rule $f$.
Do we have 
$$\mathbb{P}(\mathcal{R}_{\tau, \sigma, n} \neq \emptyset) = \frac{c(\tau, \sigma)}{n^x}  + o\left(\frac{1}{n^x}\right),$$ 
where $x = \sigma/\gcd(\tau, \sigma)$?
\end{ques}

A possible strategy to answer Question~\ref{question: generalization} affirmatively is through proving the 
following two conjectures, the first of 
which provides a lower bound of the rank of a tile.
Recall that $x=\sigma/\gcd(\tau, \sigma)$. 

\begin{con}\label{conjecture: lower bound of simple labels}
Let $T$ be a tile of a WRPS of period $\tau$ and $\sigma$ and $\ell = p(T) - s(T)$. Then $\rank(T) \ge x - \ell$.
\end{con}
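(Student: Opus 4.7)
The plan is to proceed by strong induction on the lag $\ell = p(T) - s(T)$. The base case $\ell = 0$ is immediate from Lemma~\ref{lemma: rank of simple tiles}(1), which gives $\rank(T) \ge x$ for any simple WRPS tile.

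For the inductive step, assume the bound holds for all WRPS tiles with periods $\tau, \sigma$ and lag strictly less than $\ell$, and let $T$ have lag $\ell \ge 1$. I would associate to $T$ a directed graph $D = D(T)$ whose vertices are the distinct states of $T$, with an arc $a \to b$ for each distinct horizontal pair $(a, b) = (a_{i, j}, a_{i, j+1})$ occurring in $T$. Since $D$ has $s(T)$ vertices and $p(T) = s(T) + \ell$ arcs, some state $a$ has out-degree at least two, witnessed by positions $(i, j)$ and $(i', j')$ with $a_{i, j} = a_{i', j'} = a$ but $a_{i, j+1} =: b \ne b' := a_{i', j'+1}$. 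The aim would be to construct a reduced tile $T^\prime$ of the same periods $\tau, \sigma$, obtained by identifying $b$ with $b^\prime$, replacing all occurrences of $b^\prime$ by $b$, and propagating the forced identifications throughout $T$ via the uniqueness-of-assignment property of Lemma~\ref{lemma: properties of tile}(1). Provided that $T^\prime$ is a valid WRPS tile with lag $\ell - 1$ and $\rank(T^\prime) \le \rank(T) + 1$, the induction hypothesis applied to $T^\prime$ yields $\rank(T^\prime) \ge x - (\ell - 1)$, and hence $\rank(T) \ge x - \ell$.

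The main obstacle is verifying that such a reduction actually exists and behaves as required. The cascade of substitutions triggered by $b \mapsto b'$ may violate aperiodicity of rows or columns---thereby shrinking the minimal periods and altering the value of $x$ for $T^\prime$---break the deciding property that distinguishes a WRPS from a generic PS, so that Lemma~\ref{lemma: irreducibility of labels} no longer applies, or collapse $s(T)$ and $p(T)$ by different amounts so that the lag does not drop by exactly one. A likely remedy is to select the conflicting pair $(b, b^\prime)$ carefully, for instance by first reducing to a canonical configuration in which the two occurrences of $a$ share a row so that the cascade terminates predictably, and then to track exactly which arcs of $D$ are eliminated at each identification step.

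An alternative route would avoid an explicit reduction. Take a maximum family $\mathcal{F}$ of pairwise state-disjoint simple columns of $T$, so that $|\mathcal{F}| = \rank(T)$, and argue---in the spirit of Lemma~\ref{lemma: one more map}---that every column outside $\mathcal{F}$ is either a circular shift of some column in $\mathcal{F}$ or contributes at least one fresh arc to $D$ not already supplied by $\mathcal{F}$. Using Lemma~\ref{lemma: simple tiles properties} to bound the number of shift-equivalent columns by at most $\rank(T)\gcd(\tau, \sigma)$, and bounding the fresh-arc contributions by $\ell$, an accounting of the $\sigma$ columns would produce an inequality of the form $\rank(T)\gcd(\tau, \sigma) + \ell \ge \sigma$, which rearranges to $\rank(T) \ge x - \ell/\gcd(\tau, \sigma) \ge x - \ell$. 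In either strategy, the most delicate step is the treatment of columns with internal state repetitions, whose correspondence with arcs of $D$ is less transparent than for simple columns, and which will likely require a separate structural refinement.
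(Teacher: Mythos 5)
You have not proved the statement, and neither does the paper: this is stated as Conjecture~\ref{conjecture: lower bound of simple labels} and is left open there, with proofs only in the special cases $\sigma = 2$, $\tau = 2$, and for semi-simple tiles (Lemma~\ref{lemma: rank in semi-simple tile}). Your first route hinges exactly on the step you flag as ``the main obstacle,'' and that step is where the argument fails. Merging $b$ with $b'$ goes in the wrong direction: identifying states lowers $s(T)$ and, via the uniqueness-of-assignment property of Lemma~\ref{lemma: properties of tile}, sets off a cascade of further forced identifications with no control over how many arcs and states disappear, so there is no reason the lag drops by exactly one, no reason the periods $\tau,\sigma$ (and hence $x$) survive, and no reason the result is still a WRPS tile (decidability and Lemma~\ref{lemma: irreducibility of labels} can both be destroyed; the cascade can even collapse the tile to a periodic one). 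The paper's own inductive arguments for $\tau=2$ and for semi-simple tiles do the opposite: they pick a repeated state $a$ with neighborhoods $bac$ and $b'ac'$ and replace \emph{one} occurrence of $a$ by a fresh state $z\notin T$, which keeps $p$ fixed, raises $s$ by one, and so lowers the lag by exactly one while preserving the structural properties --- and even that splitting trick is only justified there under extra hypotheses ($b\neq b'$, $c\neq c'$, no rotated columns, or the semi-simplicity condition that the first $\tilde\tau$ rows have no repeated states). Your proposal gives no mechanism to make either the merging or a splitting reduction work for a general WRPS tile, which is precisely the open content of the conjecture.

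Your second route is likewise only a heuristic. The inequality $\rank(T)\gcd(\tau,\sigma)+\ell\ge\sigma$ is asserted, not derived: the circular-shift structure of columns you invoke comes from Lemma~\ref{lemma: simple tiles properties}, which applies only to simple tiles, so a column with internal repetitions need not be a shift of any column in your family $\mathcal{F}$, and there is no argument that each such column contributes a ``fresh arc'' that can be charged against $\ell$ injectively (several deficient columns could reuse the same excess assignment). Since both routes leave their decisive steps unverified, the proposal should be read as a plan of attack on an open problem rather than a proof; if you want a rigorous partial result, the state-splitting induction in the paper's $\tau=2$ and semi-simple cases is the template to start from.
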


We recall that a tile of a WRPS satsifies the
properties stated in Lemmas~\ref{lemma: properties of tile} 
and~\ref{lemma: irreducibility of labels}. The next 
conjecture presents an asymptotic property similar to the one in Theorem~\ref{theorem: general label deciding probability}. In its 
formulation, we assume 
validity of Conjecture~\ref{conjecture: lower bound of simple labels}:
for a tile $T$ of a WRPS, we let  
$I=I(T)\subset \{0,\ldots,\sigma-1\}$ be the index set with $\#I=x-\ell$, such that 
the labels indexed by $I$ are the leftmost $x-\ell$ labels 
without a repeated state. 
%We claim that an affirmative answer to the above question can be given if the above Conjecture~\ref{conjecture: lower bound of simple labels} as well as the following one both hold.

\begin{con}\label{conjecture: extra}
Assume that $T$ is a tile of a WRPS. Then there exists a label $A_j$ with index $j\notin I$ so 
that  
$$\mathbb{P}\left(A_j \Rightarrow A_{j + 1} \bigm|
\{A_i\Rightarrow A_{i+1} \text{ for all }i\in I\}\right) = o(1).$$ 
\end{con}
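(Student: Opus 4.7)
The plan is to adapt the Poisson approximation of Theorem~\ref{theorem: general label deciding probability} to the conditional setting. I aim to identify an index $j \notin I$ such that, conditional on $\{A_i \Rightarrow A_{i+1} : i \in I\}$, the label assignment digraph $G_{\tau,n}(f, A_j)$ still contains, with conditional probability tending to $1$, a simple cycle beyond its target cycle $\zeta_j$ formed from $A_{j+1}$. By Lemma~\ref{lemma: lad}, having such an extra simple cycle prevents $A_j \Rightarrow A_{j+1}$, yielding the desired $o(1)$ bound.

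To choose $j$, I would use the hypothesis of Conjecture~\ref{conjecture: lower bound of simple labels}: the $x - \ell$ labels indexed by $I$ are simple with pairwise disjoint state sets, so their union $V_I := \bigcup_{i \in I} A_i$ contains $\tau(x - \ell)$ states. The remaining $\sigma - (x - \ell) \ge 1$ columns include either (i) a simple label whose state set is disjoint from $V_I$, (ii) a simple label intersecting $V_I$, or (iii) a non-simple label. Case (i) is immediate: the slices $f(a, \cdot)$ for $a \in A_j$ are probabilistically independent of the conditioning, so the conditional deciding probability equals the unconditional $\tau/n + o(1/n)$ of Theorem~\ref{theorem: simple label deciding probability}. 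Case (iii) with at least one state of $A_j$ outside $V_I$ is analogous: the free slice supplies enough randomness for the Poisson argument to run essentially unchanged.

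The substantive work is the worst case, combining (ii) and (iii) with every slice $f(a_{m,j}, \cdot)$ used by $G_{\tau,n}(f, A_j)$ constrained by the conditioning. Defining $Y_k$ as the number of length-$\tau k$ simple cycles (in the sense of Theorem~\ref{theorem: general label deciding probability}) in $G_{\tau,n}(f, A_j)$ and $Z_K = \sum_{k=1}^K Y_k$, I would show under the conditional law that $\mathbb{E}(Z_K \mid \cdot) \to \sum_{k=1}^K 1/k$ with Chen-Stein error $\mathcal{O}(K^2/n)$. The first moment should be preserved because each deciding event for $A_i$ fixes only $\tau$ specific values of $f(a_{m,i}, \cdot)$ along $\zeta_i$ and imposes a ``no extra simple cycle'' structural restriction — the latter affects only an $\mathcal{O}(1/n)$ fraction of slice realizations, so the conditional cycle frequencies in $G_{\tau,n}(f, A_j)$ change by $1 + o(1)$. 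Applying Lemma~\ref{lemma: chen-stein method} and then sending $n \to \infty$ followed by $K \to \infty$ concludes the argument.

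The main obstacle I anticipate is controlling the correlations introduced when several labels $A_i$, $i \in I$, share a state with $A_j$: the slice $f(a, \cdot)$ is then jointly constrained by the pseudo-tree structures of multiple LADs, which could in principle suppress cycles in $G_{\tau,n}(f, A_j)$ as well. Proving that the second-moment term $\sum (p_i p_j + \mathbb{E}(I_i I_j \mid \cdot))$ remains $\mathcal{O}(K^2/n)$ under these correlations will require careful inclusion-exclusion over the possible overlap patterns between $A_j$ and the labels of $I$, respecting the aperiodicity constraints of Lemmas~\ref{lemma: properties of tile} and~\ref{lemma: irreducibility of labels}. This combinatorial bookkeeping is, to my mind, the technical heart of the conjecture.
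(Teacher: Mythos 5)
You are attempting to prove what the paper itself states only as an open problem: Conjecture~\ref{conjecture: extra} is not proved anywhere in the paper, and the authors explicitly note that it ``remains open'' even in the special cases ($\sigma=2$ or $\tau=2$) where they can verify Conjecture~\ref{conjecture: lower bound of simple labels}. The only thing the paper offers is the remark that if some label $A_j$, $j\notin I$, shares no state with any $A_i$, $i\in I$, then the claim follows by the argument of Theorem~\ref{theorem: general label deciding probability}; your case (i) reproduces exactly that remark. Beyond it, what you have written is a plan rather than a proof. The entire content of the conjecture is the situation where every candidate $j\notin I$ has its rule slices $f(a,\cdot)$ entangled with the conditioning event, and there your argument rests on the unjustified assertion that conditioning on $\{A_i\Rightarrow A_{i+1},\ i\in I\}$ ``affects only an $\mathcal{O}(1/n)$ fraction of slice realizations,'' so that the conditional first and second moments of the cycle counts in $G_{\tau,n}(f,A_j)$ change only by a factor $1+o(1)$. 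That is precisely what needs proof: each event $\{A_i\Rightarrow A_{i+1}\}$ has probability of order $n^{-\tau-1}$ and forces the whole LAD $G_{\tau,n}(f,A_i)$ to be a pseudo-tree whose unique cycle is the target cycle --- a global constraint on all $n$ values of every slice $f(a,\cdot)$ with $a$ a state of $A_i$ --- and conditioning on such a rare, highly structured event can in principle distort the law of the shared slices enough to suppress extra cycles in $G_{\tau,n}(f,A_j)$. You acknowledge this yourself by calling the required inclusion--exclusion over overlap patterns ``the technical heart of the conjecture'' and leaving it undone; that is the gap, not a detail.

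A second, related gap: the conjecture asserts the \emph{existence} of a suitable $j\notin I$, but your trichotomy (i)--(iii) never shows that some $j$ must fall into a case you can actually handle. In the worst case every label outside $I$ could be non-simple with all of its states lying in $V_I$ (states may repeat across columns once $\ell>0$), which is exactly your unresolved case. So, as it stands, the proposal establishes nothing beyond the disjoint-state observation already recorded in the paper, and the statement remains a conjecture.
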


%Let $T$ be a tile of rank $r$. and $A_{i_0}, \dots, A_{i_{x - 1}}$ be the simple labels without repeated state as in Conjecture~\ref{conjecture: lower bound of simple labels}.
%Then there is at least one more label $A_j$ such that
%$$\mathbb{P}\left(A_j \Rightarrow A_{j + 1} \bigm| \text{decidabilities of } A_{i_0}, \dots, A_{i_{x - 1}} \text{ are satisfied }\right) = o(1).$$ 

%giving an $o(1)$ of probability. 
%This label could be one of the following cases:
%Case 1. A label that is independent from the above independent and simple labels but itself is non-simple;
%Case 2. A label that is not independent but simple;
%Case 3. A label that is not independent and non-simple.

If there exists a label $j$ that does not share any state with $A_{i}$, for any $i\in I$, the conjecture can be proved in the same way as Theorem~\ref{theorem: general label deciding probability}.
To see how Question~\ref{question: generalization} is settled  in the case that both of the conjectures are satisfied, use again the bounds
$$
\mathbb{P}\left(\mathcal{R}_{\tau, \sigma, n}^{(0)} \neq \emptyset\right) 
\le \mathbb{P}\left(\mathcal{R}_{\tau, \sigma, n} \neq \emptyset\right)
\le \mathbb{P}\left(\mathcal{R}_{\tau, \sigma, n}^{(0)} \neq \emptyset\right) 
+ \sum_{\ell}\mathbb{E}\left(\#\mathcal{R}_{\tau, \sigma, n}^{(\ell)}\right),\\
$$
and then, with $g_{\tau, \sigma}(s)$ as in the proof of 
Lemma~\ref{lemma: nonzero ell}, and using 
Lemma~\ref{lemma: conditional probability of simple tiles},
$$\mathbb{E}\left(\#\mathcal{R}_{\tau, \sigma, n}^{(\ell)}\right)
= \sum_{s = 1}^{\tau\sigma} \binom{n}{s} g_{\tau, \sigma}(s)
\frac{1}{n^m} \cdot \mathcal O\left(\frac{1}{ n^{x - \ell}}\right)\cdot o(1) = o\left(\frac{1}{n^x}\right).$$

To provide some modest evidence for the validity of   Conjecture~\ref{conjecture: lower bound of simple labels}, we prove that it holds when $\sigma = 2$ or $\tau = 2$.
Conjecture~\ref{conjecture: extra} remains open even in 
these cases.
We begin by the following lemma.

\begin{lem}\label{lemma: lag is nondecreasing}
Let $T$ be a tile of a WRPS with $\sigma=2$ and odd $\tau$. Fix an arbitrary row as the 0th row.
Let $\mathcal{M}_t = \{\text{maps up to } t \text{ th row}\}$, $\mathcal{S}_t = \{\text{states up to } t \text{ th row}\}$ and $\ell_t = \#\mathcal{M}_t - \#\mathcal{S}_t$, for $t = 0, 1, \dots, \tau - 1$. Assume the $(t + 1)$th row of the tile is 
$ab$. Then: 
\begin{enumerate}
\item if $a \in \mathcal{S}_t$ and $b \in \mathcal{S}_t$, 
$\ell_{t+1}-\ell_t=2$; 
\item if exactly one of $a$ and $b$ is in $\mathcal{S}_t$, then $\ell_{t+1}-\ell_t= 1$; and
\item if $a \notin \mathcal{S}_t$ and $b \notin \mathcal{S}_t$, $\ell_{t+1}-\ell_t=0$.
\end{enumerate}
%Then $\ell_t$ is non-decreasing in $t$. 
\end{lem}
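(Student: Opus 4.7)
My plan is to reduce the lemma to the following structural fact about $T$: the $2\tau$ ordered pairs
\[
\{(a_{i,0}, a_{i,1}) : 0 \le i \le \tau-1\} \cup \{(a_{i,1}, a_{i,0}) : 0 \le i \le \tau-1\}
\]
are pairwise distinct. Granting this, the lemma is immediate: appending the $(t+1)$th row $ab$ adds exactly the two distinct pairs $(a,b)$ and $(b,a)$ to $\mathcal{M}_t$ (neither having appeared in rows $0,\dots,t$), so $\#\mathcal{M}_{t+1} - \#\mathcal{M}_t = 2$ in every case, while $\#\mathcal{S}_{t+1} - \#\mathcal{S}_t$ is $0$, $1$, or $2$ in Cases (1)--(3) respectively. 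Subtracting gives $\ell_{t+1} - \ell_t = 2, 1, 0$.

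To prove the structural fact, I argue by contradiction in two sub-cases. Suppose first that $(a_{i,0}, a_{i,1}) = (a_{j,0}, a_{j,1})$ for distinct $i, j \in \Z_\tau$. By uniqueness of assignment (Lemma~\ref{lemma: properties of tile}(1)) applied to both pairs in row $i$ and row $j$, row $i{+}1$ coincides with row $j{+}1$, and inductively row $i{+}k$ = row $j{+}k$ for every $k \ge 0$; hence the temporal period of $T$ divides $|i-j| < \tau$, contradicting minimality. Suppose next that $(a_{i,0}, a_{i,1}) = (a_{j,1}, a_{j,0})$, possibly with $i = j$. Using spatial periodicity $a_{i,2} = a_{i,0}$ together with uniqueness applied to both pairs of row $i$ and row $j$, one gets $a_{i+1,0} = a_{j+1,1}$ and $a_{i+1,1} = a_{j+1,0}$, i.e., row $i{+}1$ is the reverse of row $j{+}1$. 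Induction extends this to row $i{+}k$ and row $j{+}k$ for all $k \ge 0$. Setting $k = j - i$ shows that row $j$ is the reverse of row $2j-i$; since row $j$ is also the reverse of row $i$, row $i$ and row $2j-i$ agree as ordered pairs. The first sub-case then forces $2(j-i) \equiv 0 \pmod{\tau}$, and since $\tau$ is odd this gives $i = j$. The hypothesis then collapses to $a_{i,0} = a_{i,1}$; but if some row has equal entries, iterating $f$ shows every row does, contradicting minimality of $\sigma = 2$.

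The main obstacle is the reversed-row sub-case, and specifically the role of odd $\tau$: this parity is exactly what allows the reduction of a reversal coincidence to an equality coincidence. If $\tau$ were even, the possibility $j-i = \tau/2$ would escape this argument, so a different ingredient would be required (and in fact the conclusion of the lemma can genuinely fail in that case).
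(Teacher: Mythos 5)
Your proof is correct and takes essentially the same route as the paper: both arguments rest on the observation that appending the row $ab$ contributes exactly the two new maps $(a,b)$ and $(b,a)$, so $\#\mathcal{M}_{t+1}-\#\mathcal{M}_t=2$ in every case. Your global distinctness claim for the $2\tau$ ordered pairs is just a packaged version of the paper's local assertions (a repeated pair would make $T$ temporally reducible, a reversed pair would force $\tau$ even), with the parity argument for the reversed case written out in full detail where the paper only states it.
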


\begin{proof}
Write $\ell_{t + 1} - \ell_t = (\#\mathcal{M}_{t + 1} - \#\mathcal{M}_t) - (\#\mathcal{S}_{t + 1} - \#\mathcal{S}_t)$.  
Observe that $a \neq b$, as otherwise the spatial period of the tile is reducible.
In addition,  $(a, b) \notin \mathcal{M}_t$, as otherwise $T$ is temporally reducible,  and $(b, a) \notin \mathcal{M}_t$, as 
otherwise $\tau$ is even. 
Hence, $\#\mathcal{M}_{t + 1} - \#\mathcal{M}_t = 2$, 
which implies the claim.
\end{proof}

\begin{proof}[Proof of Conjecture~\ref{conjecture: lower bound of simple labels} when $\sigma = 2$]
If $\tau$ is even, we need to show that 
$\rank(T)\ge 1 - \ell$. 
This is trivial if $\ell \ge 1$,  and follows from Lemma~\ref{lemma: simple tiles properties} when $\ell = 0$.

If $\tau$ is odd, we must show that  $\rank(T)\ge 2 - \ell$. 
We may assume $\ell=1$ as otherwise this is immediate (as above).
%If $\ell = 0$, $T$ is simple and the result again follows from Lemma~\ref{lemma: simple tiles properties}. 
Then there exists exactly one $t \in \{0, \dots, \tau - 1\}$
at which Case 2 of Lemma~\ref{lemma: lag is nondecreasing} happens, and otherwise Case 3 happens. If $a \in \mathcal{S}_{t}$, then column with $b$ has 
no repeated state, and vice versa.
\end{proof}

\begin{proof}[Proof of Conjecture~\ref{conjecture: lower bound of simple labels} when $\tau = 2$]
We will prove this for any tile that satisfies the
properties stated in Lemmas~\ref{lemma: properties of tile} 
and~\ref{lemma: irreducibility of labels}. 
We assume that no two different labels of $T$ are rotations of each other; otherwise the argument is similar.

We use induction on the lag.
If $\ell(T)= 0$, $T$ is simple and Lemma~\ref{lemma: simple tiles properties} applies.
Suppose now the statement is true for any tile $T$ with  
$\ell(T) = \ell\ge 0$.
Now, consider a tile $T$ with   $\ell(T) = \ell + 1$.
As $\ell(T) \ge 1$, there is at least one repeated state, say $a$.
Consider two appearance of $a$ and its neighbors:
$$bac \quad \text{and} \quad  b^\prime a c^\prime.$$
As $\tau=2$ and $T$ has no rotated columns, $b \neq b^\prime$ and $c \neq c^\prime$.
Now replace the $a$ in $bac$ by an arbitrary state not represented in $T$, say $z$, and denote the new tile by $T^\prime$.
Note that $T^\prime$ also satisfies the properties in Lemmas~\ref{lemma: properties of tile} and~\ref{lemma: simple tiles properties}.
Moreover, $p(T^\prime) = p(T)$ and $s(T^\prime) = s(T) + 1$ imply that $\ell (T^\prime) = \ell$.
By inductive hypothesis, $\rank(T^\prime) \ge \sigma/\gcd(\sigma, \tau) - \ell$. 
Among $\rank(T^\prime)$ labels of $T'$ without 
a repeated state, at most one has the state $z$.
Excluding this label, if necessary, we conclude that 
$\rank(T)\ge \sigma/\gcd(\sigma, \tau) - (\ell + 1)$.
\end{proof}

Besides the above two special cases, we are also able to prove Conjecture~\ref{conjecture: lower bound of simple labels} for a special class of tiles, which may give a hint about the general case.
Within $T$, fix an arbitrary row as the $0$th row and find the smallest $\tilde{\tau}$ such that \texttt{row}$_{\tilde{\tau}}$ is a cyclic permutation of \texttt{row}$_0$.
It is likely that such $\tilde{\tau}$ does not exist, in which case define $\tilde{\tau} = \tau$. 
We call $T$ \textbf{semi-simple} if $p(T) = \tilde{\tau}\sigma$; i.e., within the first $\tilde{\tau}$ rows in $T$, there are no repeated states. We omit the proof of our last lemma, as 
it is very similar to the argument above. 

\begin{lem} \label{lemma: rank in semi-simple tile}
A semi-simple tile $T$ has rank at least $\sigma/\gcd(\tau, \sigma) - \ell$.
\end{lem}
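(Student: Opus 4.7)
The plan is to mimic the induction on the lag $\ell$ used above for $\tau=2$, now applied to the reduced $\tilde\tau\times\sigma$ tile $T_r$ consisting of the first $\tilde\tau$ rows of $T$.

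First I would set up the reduction. Because $T$ is semi-simple, all $\tilde\tau\sigma$ pairs appearing in $T_r$ are distinct, i.e., $p(T_r)=p(T)=\tilde\tau\sigma$. Part~(1) of Lemma~\ref{lemma: properties of tile} forces each row $\tilde\tau+i$ of $T$ to be the image of row $i$ under a common circular shift $\pi$; iterating gives $\pi^d=\mathrm{id}$ with $d:=\tau/\tilde\tau$, and the minimality of $\tilde\tau$ implies $\tilde\tau\mid\tau$. So $d\mid\tau$ and, by Lemma~\ref{lemma: euler totient}, $d\mid\sigma$, hence $d\mid\gcd(\tau,\sigma)$. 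Since every state of $T$ already appears in $T_r$, we have $s(T)=s(T_r)$ and $\ell(T)=\ell(T_r)$. Moreover, each column of $T$ is the concatenation of the $d$ columns of $T_r$ lying in a single $\pi$-orbit, so choosing $x$ columns of $T$ amounts to choosing $x$ $\pi$-orbits of columns of $T_r$.

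Next I would prove the rank bound by induction on $\ell$. For the base case $\ell=0$, the $\tilde\tau\sigma$ states of $T_r$ are all distinct, so selecting one column from each of the $\sigma/d$ $\pi$-orbits yields $(\sigma/d)\cdot d\tilde\tau=\tau\sigma/d$ distinct states in $\sigma/d$ columns of $T$; hence $\rank(T)\ge \sigma/d\ge\sigma/\gcd(\tau,\sigma)$. For the inductive step, take a semi-simple $T$ with lag $\ell+1$ and locate a state $a$ with two positions $(i,j)\ne(i',j')$ in $T_r$. Distinctness of pairs in $T_r$ forces the left neighbors $b,b'$ and right neighbors $c,c'$ of the two copies of $a$ to satisfy $b\ne b'$ and $c\ne c'$. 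Picking a fresh $z\notin T$, form $T'$ by replacing $a$ at $(i,j)$ and at each of its $d-1$ $\pi$-shift images in later rows by $z$; the compatibility of $\pi$ keeps $T'$ a valid PS tile whose later rows remain $\pi$-shifts of the first $\tilde\tau$ rows of $T'$. All $d$ copies of $(b,a)$ and $(a,c)$ in $T$ become $d$ copies of $(b,z)$ and $(z,c)$, so $p(T')=p(T)=\tilde\tau\sigma$, while $s(T')=s(T)+1$ because $a$ persists at $(i',j')$. Thus $T'$ is semi-simple with lag $\ell$, and the induction hypothesis gives $\rank(T')\ge\sigma/\gcd(\tau,\sigma)-\ell$. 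Discarding from an optimal selection for $T'$ the single $\pi$-orbit of columns containing $z$ then yields $\rank(T)\ge\sigma/\gcd(\tau,\sigma)-(\ell+1)$.

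The main obstacle will be the orbit-level bookkeeping in the inductive step: one must verify that the simultaneous orbit-wide replacement produces a tile that is still a valid PS tile under a single rule, that the semi-simple condition is preserved after the change, and that the vanishing and newly created pair multiplicities align exactly as claimed. A parallel but separate argument will also be required in the degenerate situation, flagged by the "otherwise the argument is similar" clause in the $\tau=2$ proof, when the replacement interacts with accidental extra symmetries among the rows of $T$.
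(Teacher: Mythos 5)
Your proposal is correct and follows essentially the same route as the paper's (omitted) argument: induction on the lag, replacing one repeated occurrence of a state (together with its forced copies in the rows that are circular shifts of the first $\tilde\tau$ rows) by a fresh state $z$, so that $p$ is preserved, $s$ increases by one, the inductive hypothesis applies, and at most one column of an optimal selection contains $z$ and is discarded. Your additional orbit-level bookkeeping is exactly the ``very similar to the argument above'' verification the paper leaves implicit, so no new ideas are needed beyond what you outline.
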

\iffalse
\begin{proof}
The proof is similar to the above one and we thus omit the details here.

Use induction on $\ell$.
For $\ell = 0$, $T$ is simple and the result follows from Lemma~\ref{lemma: simple tiles properties}.
Suppose the statement is true for any semi-simple tile $T$ satisfying $\ell(T) = \ell$.
Now, consider a semi-simple tile $T$, in which $p(T) - s(T) = \ell(T) = \ell + 1$.
Using the condition that $T$ is semi-simple, we have $s(T) = \tilde{\tau}\sigma - (\ell + 1) < \tilde{\tau} \sigma$.
This implies that within the first $\tilde{\tau}$ rows, there exists a state appearing at least twice and we may denote this state by $a$.
For these two appearance, consider the neighbors of $a$:
$$bac \quad \text{and} \quad  b^\prime a c^\prime.$$
As $T$ is semi-simple, $b \neq b^\prime$ and $c \neq c^\prime$.
Now, in $T$, replace every $a$ that has neighbor $b$ and $c$ with a state outside $T$, say $x$ and denote the new tile by $T^\prime$.
Note that $T^\prime$ is also a WRPS tile, i.e., it satisfies Lemma~\ref{lemma: properties of tile} and that ``the period of any label cannot be reduced''.
In addition, $p(T^\prime) = p(T)$ and $s(T^\prime) = s(T) + 1$ together imply that $\ell (T^\prime) = \ell$.
By inductive hypothesis, $T^\prime$ has $\sigma/\gcd(\sigma, \tau) - \ell$ independent and simple labels. 
Among them, at most one label has the state $z$, otherwise contradicting to independence.
Now, excluding this label, the other $\sigma/\gcd(\sigma, \tau) - (\ell + 1)$ are independent and simple labels in the original tile $T$.

\end{proof}
\fi

\section*{Acknowledgements}
Both authors were partially supported by the NSF grant DMS-1513340.
JG was also supported in part by the Slovenian Research Agency (research program P1-0285). 

\bibliography{references}

\begin{thebibliography}{10}

\bibitem{barbour1992poisson}
Andrew~D Barbour, Lars Holst, and Svante Janson.
\newblock {\em Poisson approximation}.
\newblock The Clarendon Press, 1992.

\bibitem{gravner2011one}
Janko Gravner and David Griffeath.
\newblock The one-dimensional exactly 1 cellular automaton: replication,
  periodicity, and chaos from finite seeds.
\newblock {\em Journal of Statistical Physics}, 142(1):168--200, 2011.

\bibitem{gravner2012robust}
Janko Gravner and David Griffeath.
\newblock Robust periodic solutions and evolution from seeds in one-dimensional
  edge cellular automata.
\newblock {\em Theoretical Computer Science}, 466:64, 2012.

\bibitem{gl3}
Janko Gravner and Xiaochen Liu.
\newblock Maximal temporal period of a periodic solution generated by a
  one-dimensional cellular automaton.
\newblock {\em arXiv preprint arXiv:1909.06915}, 2019.

\bibitem{gl2}
Janko Gravner and Xiaochen Liu.
\newblock One-dimensional cellular automata with random rules: longest temporal
  period of a periodic solution.
\newblock {\em arXiv preprint arXiv:1909.06914}, 2019.

\bibitem{gl1}
Janko Gravner and Xiaochen Liu.
\newblock Periodic solutions of one-dimensional cellular automata with random
  rules.
\newblock {\em arXiv preprint arXiv:1909.06913}, 2019.

\bibitem{xiaochen}
Xiaochen Liu.
\newblock {\em Cellular automata with random rules}.
\newblock PhD thesis, University of California, Davis, in preparation, 2020.

\bibitem{ross2011fundamentals}
Nathan Ross.
\newblock Fundamentals of {Stein}'s method.
\newblock {\em Probability Surveys}, 8:210--293, 2011.

\bibitem{moh1990number}
Moh'd Z.~Abu Sbeih.
\newblock On the number of spanning trees of {$K_n$ and $K_{m, n}$}.
\newblock {\em Discrete mathematics}, 84(2):205--207, 1990.

\bibitem{strogatz2001nonlinear}
Stephen Strogatz.
\newblock {\em Nonlinear dynamics and chaos: with applications to physics,
  biology, chemistry, and engineering}.
\newblock Westview Press, 2015.

\end{thebibliography}

\end{document}